\theoremstyle{definition}
\newtheorem{dfn}{Definition}[section]
\newtheorem{thm}[dfn]{Theorem}
\newtheorem{lem}[dfn]{Lemma}
\newtheorem{prp}[dfn]{Propostion}
\newtheorem{cor}[dfn]{Corollary}
\newtheorem{rem}[dfn]{Remark}
\newtheorem*{ack}{Acknowledgements}
\newtheorem*{plan}{Outline of the paper}
\newtheorem*{notation}{Notation}
\newtheorem{que}[dfn]{Question}
\newtheorem*{Gal}{Theorem A}
\newtheorem*{Liu1314}{Theorem B}
\newcommand{\bigslant}[2]{{\raisebox{.2em}{$#1$}\left/\raisebox{-.2em}{$#2$}\right.}}
\newcommand{\plim}[1][]{\mathop{\varprojlim}\limits_{#1}}
\newcommand{\RomanNumeralCaps}[1]{\MakeUppercase{\romannumeral #1}}
\begin{document}
\title{Stable $\mathbb{I}$-free lattices and the two-variable algebraic $p$-adic $L$-functions in residually reducible Hida deformation}
\author{Dong Yan}
\date{}
\maketitle
\begin{abstract}
This paper is a continuation of the author's previous work, where we studied the number of isomorphic classes of $G_{\mathbb{Q}}$-stable lattices in $p$-adic families of residually reducible ordinary modular Galois representations. In this paper, we study the number of isomorphic classes of $G_{\mathbb{Q}}$-stable free lattices in a residually reducible Hida deformation and the variation of their related two-variable algebraic $p$-adic $L$-functions. This result gives an answer of a question asked by Ochiai. 
\end{abstract}
\tableofcontents

\section{Introduction}
We fix an irregular pair $\left(p, k \right)$ i.e. $p$ is an irregular prime and $p$ divides the numerator of the $k$-th Bernoulli number $B_k$. By an idea of Ribet \cite{Ri76}, there is an eigen cusp form $f_k \in S_k\left(\mathrm{SL}_2\left(\mathbb{Z} \right) \right)$ such that the residual representation of the Galois representation $\rho_{f_k}$ attached to $f_k$ is reducible. This Galois representation $\rho_{f_k}$ and its Hida deformation play an important role in the Iwasawa theory for ideal class groups which is studied by Ribet \cite{Ri76}, Mazur-Wiles \cite{MW84} and Wile \cite{Wi90}. 

The author would like to study the Iwasawa theory for the two-variable Hida deformation of $f_k$ which is a special case of the Iwasawa theory for Galois deformations which is proposed by Greenberg \cite{RG}. One of subtle points in the residually reducible case is that the Selmer group and its characteristic ideal may depend on the choice of $G_{\mathbb{Q}}$-stable lattice. We first prepare some notation on Hida deformation. We fix embeddings $\overline{\mathbb{Q}} \hookrightarrow \overline{\mathbb{Q}_p}$ and $\overline{\mathbb{Q}} \hookrightarrow \mathbb{C}$, where $\overline{\mathbb{Q}}$ and $\overline{\mathbb{Q}_p}$ are the algebraic closures of the rational number field $\mathbb{Q}$ and the $p$-adic number field $\mathbb{Q}_p$ respectively. Let $\Gamma^{\prime}$ be the $p$-Sylow subgroup of group of diamond operators for the tower of modular curves $\left\{Y_1\left(p^t\right)\right\}_{t\geq 1}$ (see \S 3). For an integer $r$, we denote by $\mu_r$ the group of $r$-th roots of unity. Let $\omega: \left(\left.\mathbb{Z}\right/{p\mathbb{Z}} \right)^{\times} \rightarrow \mu_{p-1}$ be the Teichm\"uller character. In \cite{H2} and \cite{Wi88}, Hida and Wiles proved that there exists an integrally closed local domain $\mathbb{I}$ which is finite flat over $\Lambda:=\mathbb{Z}_p[[\Gamma^{\prime}]]$ and an $\mathbb{I}$-adic normalized eigen cusp form $\mathcal{F}=\displaystyle\sum_{n=1}^{\infty}a\left(n, \mathcal{F} \right)q^n\in\mathbb{I}[[q]]$ with character $\omega^{k-1}$ which can specialize to the $p$-stabilization of $f_k$ under an arithmetic specialization (see \S 3 below). 

Let $\mathbf{H}^{\mathrm{ord}}:=\mathbf{H}^{\mathrm{ord}}\left(1, \mathbb{Z}_p\right)$ be Hida's ordinary Hecke algebra over $\Lambda$ as in \cite{H1} and $\mathbf{h}^{\mathrm{ord}}:=\mathbf{h}^{\mathrm{ord}}\left(1, \mathbb{Z}_p\right)$ the quotient of $\mathbf{H}^{\mathrm{ord}}$ corresponding to cusp forms. By \cite[Theorem 3.1]{H1}, $\mathbf{H}^{\mathrm{ord}}$ and $\mathbf{h}^{\mathrm{ord}}$ are free $\Lambda$-modules of finite rank. The action of Hecke operators on the modular curve $X_1\left(p^r\right)$  induces convariant and cotravariant actions on its $p$-adic \'etale cohomology groups. We concern the covariant action throughout the paper. Let $\mathfrak{M}:=\mathfrak{M}\left(\omega^{k-2}, \mathbf{1} \right)$ be the Eisenstein maximal ideal of $\mathbf{H}^{\mathrm{ord}}$ as in \cite[\S 1.2, (1.2.9)]{MO05}. For simplicity, we assume the following condition throughout this section.
\begin{list}{}{}
\item[($\Lambda$)]The $\Lambda$-module $\mathbf{h}^{\mathrm{ord}}_{\mathfrak{M}}$ is of rank one over $\Lambda$. 
\end{list}
Note that the condition ($\Lambda$) holds for all irregular pairs $\left(p, k \right)$ with $p<10^7$ and $k<8000$ except for the pair $\left(p, k \right)=\left(547, 486 \right)$ (cf. \cite[Appendix II]{M11}). We also have that under the condition ($\Lambda$), the above $\mathbb{I}$-adic normalized eigen cusp form $\mathcal{F}$ has Fourier coefficients in $\Lambda$ i.e. $\mathbb{I}=\Lambda$ (cf. \cite[\S 7.6]{H3}).

In \cite[Theorem 2.1]{H2}, Hida constructed a Galois representation $\mathbb{V}_{\mathcal{F}}$ attached to $\mathcal{F}$ (see Theorem \ref{Hida} below). Let $\rho_{\mathcal{F}}$ be the action of $G_{\mathbb{Q}}$ on $\mathbb{V}_{\mathcal{F}}$. Recall that $\mathbb{V}_{\mathcal{F}}$ has a lattice $\mathbb{T}$ (see Definition \ref{lattice} below) which is stable under the action of $G_{\mathbb{Q}}$ (we call that $\mathbb{T}$ is a \textit{$G_{\mathbb{Q}}$-stable lattice} of $\mathbb{V}_{\mathcal{F}}$). The action of $G_{\mathbb{Q}}$ on $\mathbb{T}$ induced by $\rho_{\mathcal{F}}$ is continuous with respect to the $\mathfrak{m}_{\mathbb{I}}$-adic topology on $\mathrm{Aut}_{\mathbb{I}}\left(\mathbb{T} \right)$ and unramified outside $\set{p, \infty}$, where $\mathfrak{m}_{\mathbb{I}}$ denotes the maximal ideal of $\mathbb{I}$. Let us fix an integer $i$ from now on to the end of this paper. Let $\Gamma=\mathrm{Gal}\left(\mathbb{Q}_{\infty}/\mathbb{Q} \right)$, where $\mathbb{Q}_{\infty}$ is the cyclotomic $\mathbb{Z}_p$-extension of $\mathbb{Q}$. Let $\mathcal{R}=\mathbb{I}[[\Gamma]]$ and we introduce the $\mathcal{R}$-module $\mathcal{T}^{\left(i \right)}:=\mathbb{T}\hat{\otimes}_{\mathbb{Z}_p} \mathbb{Z}_p[[\Gamma]]\left(\tilde{\kappa}^{-1}\right)\otimes_{\mathbb{Z}_p}\omega^i$ on which $G_{\mathbb{Q}}$ acts diagonally, where $\tilde{\kappa}$ is the character $\tilde{\kappa}: G_{\mathbb{Q}} \twoheadrightarrow \Gamma \hookrightarrow \mathbb{Z}_p[[\Gamma]]^{\times}$. Recall that Wiles \cite[Theorem 2.2.2]{Wi88} proved that $\mathbb{V}_{\mathcal{F}}$ has a unique $D_p$-stable subspace $F^{+}\mathbb{V}_{\mathcal{F}}$ of dimension one such that the action of $D_p$ on $\left.\mathbb{V}_{\mathcal{F}}\right/F^{+}\mathbb{V}_{\mathcal{F}}$ is unramified. This induce an $\mathbb{I}[D_p]$-submodule $F^{+}\mathbb{T}=\mathbb{T} \cap F^{+}\mathbb{V}_{\mathcal{F}}$ of $\mathbb{T}$ and a $\mathcal{R}[D_p]$-submodule $F^{+}\mathcal{T}^{\left(i \right)}=F^{+}\mathbb{T}^{\left(i \right)} \hat{\otimes}_{\mathbb{Z}_p} \mathbb{Z}_p[[\Gamma]]\left(\tilde{\kappa}^{-1} \right)\otimes_{\mathbb{Z}_p}\omega^i$ of $\mathcal{T}^{\left(i \right)}$. For a $\mathcal{R}$-module $M$, we denote by $M^{\lor}=\mathrm{Hom}_{\mathbb{Z}_p}\left(M, \mathbb{Q}_p/\mathbb{Z}_p \right)$ the Pontrjagin dual of $M$. Let $\mathcal{A}=\mathcal{T}^{\left(i \right)}\otimes_{\mathcal{R}}\mathcal{R}^{\lor}$ and we denote by $F^{+}\mathcal{A}=F^{+}\mathcal{T}\otimes_{\mathcal{R}}\mathcal{R}^{\lor}$ in $\mathcal{A}$. Then we define the Selmer group $\mathrm{Sel}_{\mathcal{A}}$ as Definition \ref{7131}. 

In \cite[Proposition 4.9]{Ochiai06} and \cite[Remark 1.7 3-(b)]{Ochiai08}, Ochiai proved that $\left(\mathrm{Sel}_{\mathcal{A}}\right)^{\lor}$ is a finitely generated torsion $\mathcal{R}$-module for any $\mathbb{T}$. Then we denote by $\mathrm{char}_{\mathcal{R}}\left(\mathrm{Sel}_{\mathcal{A}} \right)^{\lor}$ the characteristic ideal of $\left(\mathrm{Sel}_{\mathcal{A}}\right)^{\lor}$. Let $L_p^{\mathrm{alg}}\left(\mathcal{T}^{\left(i \right)} \right)$ be a generator of $\left(\mathrm{Sel}_{\mathcal{A}}\right)^{\lor}$ which is well-defined up to multiplying by elements of $\mathcal{R}^{\times}$. We call $L_p^{\mathrm{alg}}\left(\mathcal{T}^{\left(i \right)} \right)$ \textit{the algebraic $p$-adic $L$-function for $\mathcal{T}^{\left(i \right)}$}.

Let $\rho_{\mathcal{F}}\left(\mathfrak{m}_{\mathbb{I}} \right)$ be the residual representation of $\rho_{\mathcal{F}}$ modulo $\mathfrak{m}_{\mathbb{I}}$. The existence and the uniqueness of such residual representation are proved by Mazur-Wiles \cite[\S 9]{MW2}. Note that $\rho_{\mathcal{F}}\left(\mathfrak{m}_{\mathbb{I}} \right)$ is isomorphic to the semi-simplification of the residual representation of $f_k$. Since $\rho_{\mathcal{F}}\left(\mathfrak{m}_{\mathbb{I}} \right)$ is reducible, the $G_{\mathbb{Q}}$-stable lattice $\mathbb{T}$ is not unique up to homothety. Thus $L_p^{\mathrm{alg}}\left(\mathcal{T}^{\left(i\right)} \right)$ may depend on $\mathbb{T}$. 

We call $\mathbb{T}$ a free lattice if $\mathbb{T}$ is a free $\mathbb{I}$-module. In general, a $G_{\mathbb{Q}}$-stable lattice $\mathbb{T}$ is not necessarily free over $\mathbb{I}$. However since $\mathbb{I}=\Lambda$ is a regular local ring, there exists a $G_{\mathbb{Q}}$-stable $\mathbb{I}$-free lattice by taking the double linear dual $\mathbb{T}^{**}:=\mathrm{Hom}_{\mathbb{I}}\left(\mathrm{Hom}_{\mathbb{I}}\left(\mathbb{T}, \mathbb{I}   \right), \mathbb{I}  \right)$ of a given $G_{\mathbb{Q}}$-stable lattice $\mathbb{T}$. We have a formula on the change of the algebraic $p$-adic $L$-functions for different choice of lattices due to Ochiai.

\begin{thm}[Ochiai {\cite[Corollary 4.4]{Ochiai08}}]\label{Ochiai thm}
Assume the condition ($\Lambda$). Let $\mathbb{T}$ and $\mathbb{T}^{\prime}$ be $G_{\mathbb{Q}}$-stable $\mathbb{I}$-free lattices of $\rho_{\mathcal{F}}$ such that $\mathbb{T}^{\prime} \subset \mathbb{T}$. Let $\mathcal{T}^{\left(i \right)} =\mathbb{T}\hat{\otimes}_{\mathbb{Z}_p} \mathbb{Z}_p[[\Gamma]]\left(\tilde{\kappa}^{-1}\right)\otimes_{\mathbb{Z}_p}\omega^i$ and ${\mathcal{T}^{\prime}}^{\left(i \right)}=\mathbb{T}^{\prime}\hat{\otimes}_{\mathbb{Z}_p} \mathbb{Z}_p[[\Gamma]]\left(\tilde{\kappa}^{-1}\right)\otimes_{\mathbb{Z}_p}\omega^i$. Then we have
\begin{equation*}
\dfrac{\left( L_p^{\mathrm{alg}}\left(\mathcal{T}^{\left(i \right)} \right)   \right)}{(L_p^{\mathrm{alg}}({\mathcal{T}^{\prime}}^{\left(i \right)} ) )}=\displaystyle\prod_{\mathfrak{P}\in P^1\left(\mathcal{R} \right)}\mathfrak{P}^{\mathrm{length}_{\mathcal{R}_{\mathfrak{P}}}\left(\left(\left.\mathcal{T}^{\left(i \right)}\right/{\mathcal{T}^{\prime}}^{\left(i \right)} \right)_{G_{\mathbb{R}}} \right)_{\mathfrak{P}}-\mathrm{length}_{\mathcal{R}_{\mathfrak{P}}}\left(\left(\left.F^{+}\mathcal{T}^{\left(i \right)}\right/F^{+}{\mathcal{T}^{\prime}}^{\left(i \right)} \right) \right)_{\mathfrak{P}}}, 
\end{equation*}
where $P^1\left(\mathcal{R} \right)$ is the set of all height-one primes of $\mathcal{R}$. 
\end{thm}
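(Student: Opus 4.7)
The plan is to reduce the formula to a local computation at each height-one prime of $\mathcal{R}$. Since $L_p^{\mathrm{alg}}(\mathcal{T}^{(i)})$ generates $\mathrm{char}_\mathcal{R}((\mathrm{Sel}_\mathcal{A})^{\lor})$, and similarly for $\mathcal{T}'^{(i)}$, the desired identity is equivalent to showing
\[
\mathrm{length}_{\mathcal{R}_\mathfrak{P}}\bigl(((\mathrm{Sel}_\mathcal{A})^{\lor})_\mathfrak{P}\bigr) - \mathrm{length}_{\mathcal{R}_\mathfrak{P}}\bigl(((\mathrm{Sel}_{\mathcal{A}'})^{\lor})_\mathfrak{P}\bigr) = e_\mathfrak{P}
\]
at every height-one prime $\mathfrak{P}$ of $\mathcal{R}$, where $e_\mathfrak{P}$ denotes the exponent on the right-hand side. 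Since $\mathcal{R}_\mathfrak{P}$ is a discrete valuation ring, the overall strategy is to construct an exact sequence relating the two Selmer groups over $\mathcal{R}$ and then to exploit additivity of length over short exact sequences of finite length $\mathcal{R}_\mathfrak{P}$-modules.

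The comparison exact sequence is built as follows. Since $\mathbb{T}$ and $\mathbb{T}'$ are $\mathbb{I}$-free, the modules $\mathcal{T}^{(i)}$ and $\mathcal{T}'^{(i)}$ are $\mathcal{R}$-free; Pontryagin-dualizing the inclusion $\mathcal{T}'^{(i)} \hookrightarrow \mathcal{T}^{(i)}$ produces a short exact sequence $0 \to (\mathcal{T}^{(i)}/\mathcal{T}'^{(i)})^{\lor} \to \mathcal{A} \to \mathcal{A}' \to 0$, and the parallel sequence for $F^+$ holds over $D_p$. I would then assemble the global-to-local Greenberg diagram defining $\mathrm{Sel}_\mathcal{A}$ and $\mathrm{Sel}_{\mathcal{A}'}$ and apply the snake lemma. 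The outcome is a long exact sequence connecting $(\mathrm{Sel}_\mathcal{A})^{\lor}$ and $(\mathrm{Sel}_{\mathcal{A}'})^{\lor}$ whose correction terms consist of Galois cohomology groups of the torsion modules $\mathcal{T}^{(i)}/\mathcal{T}'^{(i)}$ and $F^+\mathcal{T}^{(i)}/F^+\mathcal{T}'^{(i)}$ at various places.

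After localizing at a fixed height-one prime $\mathfrak{P}$, I would identify each correction term as a length contribution. The archimedean term $H^1(\mathbb{R}, \mathcal{T}^{(i)}/\mathcal{T}'^{(i)})$, computed via Tate cohomology of the cyclic group $G_\mathbb{R}$, is controlled by $(\mathcal{T}^{(i)}/\mathcal{T}'^{(i)})_{G_\mathbb{R}}$ and accounts for the first exponent in the formula. The Greenberg local condition at $p$ contributes $F^+\mathcal{T}^{(i)}/F^+\mathcal{T}'^{(i)}$ directly and accounts for the second exponent. All remaining error terms, namely global $H^0$ and $H^2$ over the $p$-unramified extension, ramification terms at auxiliary primes, and local $H^0$ at $p$, must be shown to be $\mathcal{R}$-pseudo-null and hence to vanish after localization at $\mathfrak{P}$.

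The main obstacle is precisely this last vanishing step, which is responsible for isolating the two explicit length contributions in the final formula. To carry it out I would use irreducibility of the generic fibre $\mathbb{V}_\mathcal{F}$ together with Tate local duality at $p$ and a Poitou--Tate computation to control the $H^2$ pieces. The condition $(\Lambda)$, which forces $\mathbb{I} = \Lambda$ to be a regular two-dimensional local ring, enters crucially here, since it guarantees that $\mathcal{R}$-pseudo-null submodules of any finitely generated $\mathcal{R}$-torsion module localize to zero at every height-one prime, and therefore do not affect the length computation at $\mathfrak{P}$.
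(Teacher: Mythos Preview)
This theorem is not proved in the present paper: it is quoted as a result of Ochiai (\cite[Corollary 4.4]{Ochiai08}), and the paper simply invokes it (see also Theorem \ref{O}, which restates the same formula in slightly greater generality, again without proof). There is therefore no ``paper's own proof'' to compare your proposal against.

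That said, your outline is broadly in the spirit of what the paper attributes to Ochiai: the sentence before Theorem \ref{O} says the formula is obtained ``by generalizing the method of Perrin-Riou \cite{PR}'', which is exactly the change-of-lattice/Selmer-comparison argument you sketch. Two small remarks on accuracy. First, the role you assign to condition ($\Lambda$) is slightly off: in the paper it is used to ensure $\mathbb{I}=\Lambda$ (hence $\mathcal{R}$ regular of Krull dimension three) and that free lattices exist and the Selmer dual is torsion (via Theorem \ref{torsion}); the pseudo-nullity bookkeeping you mention is already implicit in the definition of characteristic ideals and does not require ($\Lambda$) per se. Second, the identification of the archimedean correction with $(\mathcal{T}^{(i)}/\mathcal{T}'^{(i)})_{G_{\mathbb{R}}}$ in Ochiai's argument comes from an Euler--Poincar\'e/Herbrand-quotient computation rather than directly from $H^1(\mathbb{R},-)$; your description is morally right but would need care to make precise. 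Beyond these points your plan matches the method the paper cites.
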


There is a ``geometric lattice" $\mathbb{T}_{\mathcal{F}}$ of $\mathbb{V}_{\mathcal{F}}$ which is constructed as follows
\begin{equation}\label{12022}
\mathbb{T}_{\mathcal{F}}:=\mathrm{Hom}_{\mathbb{I}}\left(\plim[r\geq1]H_{\text{\'et}}^1\left(X_1\left(p^r\right)\otimes_{\mathbb{Q}}\overline{\mathbb{Q}}, \mathbb{Z}_p \right)_{\mathfrak{M}}^{\mathrm{ord}}\otimes_{\mathbf{H}_{\mathfrak{M}}^{\mathrm{ord}}}\mathbb{I}, \mathbb{I}\right).
\end{equation}
Let $\mathcal{T}_{\mathcal{F}}^{\left(i \right)}:=\mathbb{T}_{\mathcal{F}}\hat{\otimes}_{\mathbb{Z}_p}\mathbb{Z}_p[[\Gamma]]\left(\tilde{\kappa}^{-1}\right)\otimes_{\mathbb{Z}_p}\omega^i$. As a continuation of the work on Iwasawa theory for two-variable residually reducible Hida deformations, Ochiai proposed the following question:
\begin{que}[Ochiai {\cite[Question 4.5 and \S 1]{Ochiai08}}]\label{q4.5}
\begin{enumerate}
\item[(1)] How many $G_{\mathbb{Q}}$-stable lattices up to $G_{\mathbb{Q}}$-isomorphism exist for a given Hida deformation (\cite[Question 4.5-(1)]{Ochiai08})?

\item[(2)] Can we calculate the variation of $L_p^{\mathrm{alg}}\left(\mathcal{T}^{\left(i\right)} \right)$ when $\mathbb{T}$ varies (\cite[Question 4.5-(2)]{Ochiai08})?

\item[(3)] Is $L_p^{\mathrm{alg}}(  \mathcal{T}_{\mathcal{F}}^{\left(i \right)}  )$ minimal under divisibility among the set of $L_p^{\mathrm{alg}}\left(\mathcal{T}^{\left(i\right)} \right)$ for all $G_{\mathbb{Q}}$-stable lattices of $\mathbb{V}_{\mathcal{F}}$ (\cite[\S 1 the statement below Remark 1.7]{Ochiai08})?
\end{enumerate}
\end{que}

Note that Question \ref{q4.5}-(3) is motivated by Stevens \cite{GS} in which we find a conjectural answer on the choice of the minimal lattice of the $p$-adic representation attached to an elliptic curve over $\mathbb{Q}$. It is conjectured by Greenberg that the $\mu$-invariant of the algebraic and the analytic $p$-adic $L$-functions for the minimal lattice are both zero. Then under the conjecture of $\mu=0$, the half of the main conjecture for residually reducible elliptic curves follows by Kato's theorem.

In this paper, we give an answer of the above question. First we give some remarks on Question \ref{q4.5}. In order to answer the question, first we must remark that we must make clear the question whether we consider only $G_{\mathbb{Q}}$-stable $\mathbb{I}$-free lattices or we consider all $G_{\mathbb{Q}}$-stable lattices. Recall that under certain conditions, we proved the following result for non-free lattices.

\begin{prp}[{\cite[Corollary 1.7]{Y}}]
Let $\left(p, k\right)$ be an irregular pair. Assume the condition ($\Lambda$) and that the ideal generated by Kubota-Leopoldt $p$-adic $L$-function $\mathcal{L}_p\left(\omega^{k-1}; \gamma^{\prime} \right) \in \Lambda$ (see \S 4.1 below) is a prime ideal of $\Lambda$. Suppose that $\mathcal{L}_p\left(\omega^{k-1}; \gamma^{\prime} \right)$ has a zero in $p\mathbb{Z}_p$, then there are infinitely many $G_{\mathbb{Q}}$-stable lattices of $\mathbb{V}_{\mathcal{F}}$ up to $G_{\mathbb{Q}}$-isomorphism. 
\end{prp}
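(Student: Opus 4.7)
The plan is to classify the $G_{\mathbb{Q}}$-stable lattices of $\mathbb{V}_{\mathcal{F}}$ through an analysis of the matrix form of $\rho_{\mathcal{F}}$ given by Ribet's lemma, together with the Mazur--Wiles theorem as refined by Ohta, and then to use the two hypotheses on $\mathcal{L}_p\left(\omega^{k-1}; \gamma' \right)$ to exhibit an infinite family of non-isomorphic classes. First, fix a basis of $\mathbb{V}_{\mathcal{F}}$ in which
\[
\rho_{\mathcal{F}}\left(g \right) = \begin{pmatrix} a\left(g \right) & b\left(g \right) \\ c\left(g \right) & d\left(g \right) \end{pmatrix}
\]
with $a \bmod \mathfrak{m}_{\mathbb{I}} = \omega^{k-1}$ and $d \bmod \mathfrak{m}_{\mathbb{I}} = \mathbf{1}$, and let $B$, $C$ be the ideals of $\mathbb{I}$ generated by $\left\{b\left(g \right)\right\}_{g \in G_{\mathbb{Q}}}$ and $\left\{c\left(g \right)\right\}_{g \in G_{\mathbb{Q}}}$, respectively. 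By Ribet's lemma, $BC$ is contained in the Eisenstein ideal $I_{\mathrm{Eis}}$ of $\mathbf{h}^{\mathrm{ord}}_{\mathfrak{M}}$. Under condition $\left(\Lambda \right)$ and Ohta's refinement of the Mazur--Wiles theorem, one has $I_{\mathrm{Eis}} = \left(\mathcal{L}_p\left(\omega^{k-1}; \gamma' \right) \right)$ in $\mathbb{I} = \Lambda$, and a suitable choice of basis allows one to normalize, say, $C = \mathbb{I}$ and $B = I_{\mathrm{Eis}}$.

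Second, I would parameterize the $G_{\mathbb{Q}}$-stable lattices of $\mathbb{V}_{\mathcal{F}}$ up to homothety by ideals $J$ of $\mathbb{I}$ satisfying $I_{\mathrm{Eis}} \subseteq J \subseteq \mathbb{I}$: the associated lattice is $\mathbb{T}_J := \mathbb{I}\, e_1 \oplus J\, e_2$, and admissibility of $J$ (that is, $\mathbb{T}_J$ being preserved by $\rho_{\mathcal{F}}$) is equivalent to $b\left(g \right) \in J$ and $c\left(g \right) \in J^{-1}$ for every $g \in G_{\mathbb{Q}}$. Since $\rho_{\mathcal{F}}$ is absolutely irreducible, Schur's lemma applied over $\mathrm{Frac}\left(\mathbb{I} \right)$ yields $\mathrm{End}_{\mathbb{I}\left[G_{\mathbb{Q}}\right]}\left(\mathbb{V}_{\mathcal{F}} \right) = \mathbb{I}$; hence any $G_{\mathbb{Q}}$-equivariant isomorphism $\mathbb{T}_J \xrightarrow{\sim} \mathbb{T}_{J'}$ extends to an element of $\mathbb{I}^{\times}$ acting on $\mathbb{V}_{\mathcal{F}}$, which forces $J = J'$ as ideals. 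Thus the set of $G_{\mathbb{Q}}$-isomorphism classes of lattices is in bijection with the set of ideals of the quotient ring $\mathbb{I}/I_{\mathrm{Eis}} = \Lambda/\left(\mathcal{L}_p\left(\omega^{k-1}; \gamma' \right) \right)$.

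Third, under the hypotheses $\left(\mathcal{L}_p\left(\omega^{k-1}; \gamma' \right) \right)$ is a height-one prime of $\Lambda$, so the quotient $\Lambda/\left(\mathcal{L}_p\left(\omega^{k-1}; \gamma' \right) \right)$ is a one-dimensional Noetherian integral domain; the existence of a zero $x \in p\mathbb{Z}_p$ of $\mathcal{L}_p$ singles out a maximal ideal $\mathfrak{n} \subset \Lambda/\left(\mathcal{L}_p \right)$ with finite residue field, ensuring the quotient is non-trivial. A one-dimensional Noetherian domain having a maximal ideal of positive residue characteristic admits infinitely many distinct ideals (for example, the powers of $\mathfrak{n}$ form a strictly descending chain by Krull's intersection theorem). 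Lifting these to $\Lambda$ produces infinitely many admissible $J$, and by the bijection of the previous step we obtain infinitely many $G_{\mathbb{Q}}$-non-isomorphic lattices of $\mathbb{V}_{\mathcal{F}}$.

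The main obstacle I expect is the matrix-form normalization in the first step: realizing a basis with $B = I_{\mathrm{Eis}}$ and $C = \mathbb{I}$ requires a careful deformation-theoretic execution of Ribet's construction in the Hida-theoretic setting, together with the precise identification of the Eisenstein ideal via Ohta's theorem. The classification in the second step is also sensitive to that normalization, and one must verify that it is consistent with absolute irreducibility of $\rho_{\mathcal{F}}$ so that Schur's lemma applies. Once these two steps are in place, the infinitude in the third step follows robustly from elementary commutative algebra applied to $\Lambda/\left(\mathcal{L}_p \right)$ under the prime-ness hypothesis and the existence of a zero in $p\mathbb{Z}_p$.
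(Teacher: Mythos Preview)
This proposition is quoted from the author's earlier paper \cite{Y} and is not proved in the present paper, so there is no in-paper argument to compare against directly. Your strategy is essentially the right one and matches the spirit of \cite{Y}; let me flag two points.

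First, a minor correction: with $\mathbb{T}_J=\mathbb{I}\,e_1\oplus J\,e_2$ and the usual matrix action, stability reads $c(g)\in J$ and $J\,b(g)\subseteq\mathbb{I}$, not the condition you wrote (you have swapped the roles of $b$ and $c$). Under your normalization $C=\mathbb{I}$ this forces $\mathbb{I}\subseteq J\subseteq I_{\mathrm{Eis}}^{-1}$ rather than $I_{\mathrm{Eis}}\subseteq J\subseteq\mathbb{I}$; the two ranges are of course in bijection via $J\mapsto I_{\mathrm{Eis}}\,J$, so the count is unaffected. Your Schur-lemma step is correct: an $\mathbb{I}[G_{\mathbb{Q}}]$-isomorphism extends to a scalar $\lambda\in\mathbb{K}^\times$, and comparing the rank-one summand on which the scalar acts forces $\lambda\in\mathbb{I}^\times$, hence $J=J'$.

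Second, and more interestingly, once the normalization $B=I_{\mathrm{Eis}}$, $C=\mathbb{I}$ is in hand your argument does not actually need the hypothesis that $\mathcal{L}_p(\omega^{k-1};\gamma')$ has a zero in $p\mathbb{Z}_p$. When $(\mathcal{L}_p)$ is a nonzero prime of $\Lambda$ it is automatically of height one, so $\Lambda/(\mathcal{L}_p)$ is a one-dimensional complete Noetherian local domain with residue field $\mathbb{F}_p$; Krull's intersection theorem already yields infinitely many distinct powers of its maximal ideal, with no appeal to a $\mathbb{Z}_p$-zero. The zero hypothesis pins down $\Lambda/(\mathcal{L}_p)\cong\mathbb{Z}_p$, which is what makes the construction in \cite{Y} completely explicit, but it is not logically required for bare infinitude once your normalization step is granted. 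You are right that this normalization is the real content: it rests on the Gorenstein property of $\mathbf{h}^{\mathrm{ord}}_{\mathfrak{M}}$ (which holds under $(\Lambda)$) together with Ohta's structure theorem, and is exactly the input that the present paper invokes in Corollary~\ref{canonical}.
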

However, by Lemma \ref{43} we know that to answer Question \ref{q4.5}-(2), it is enough to study the the variation of $L_p^{\mathrm{alg}}\left(\mathcal{T}^{\left(i \right)} \right)$ when $\mathbb{T}$ varies in the set of $G_{\mathbb{Q}}$-stable $\mathbb{I}$-free lattices. Our answers to the Question \ref{q4.5}-(1) (only for free lattices), (2) and (3) is the following theorem.

Let $\mathscr{L}^{\mathrm{fr}}\left(\rho_{\mathcal{F}} \right)$ be the set of isomorphic classes of $G_{\mathbb{Q}}$ stable $\mathbb{I}$-free lattices of $\mathbb{V}_{\mathcal{F}}$. We denote by $\mathscr{L}_p^{\mathrm{alg}}\left(\rho_{\mathcal{F}}^{\mathrm{n. ord}, \left(i \right)} \right)$ the set of all $\left(L_p^{\mathrm{alg}}\left(\mathcal{T}^{\left(i \right)} \right) \right)$ when $\mathbb{T}$ varies in the set of all  $G_{\mathbb{Q}}$-stable lattices of $\mathbb{V}_{\mathcal{F}}$. Our main result is as follows:

\begin{Gal}[Theorem \ref{yandongmain}, Corollary \ref{canonical}]\label{Gal}
Let $\left(p, k \right)$ be an irregular pair. Assume the condition ($\Lambda$), then we have the following statements:
\begin{enumerate}
\item[(1)]There are only finitely many $G_{\mathbb{Q}}$-stable $\mathbb{I}$-free lattices of $\mathbb{V}_{\mathcal{F}}$. Furthermore, we have
\begin{equation*}
\sharp\mathscr{L}^{\mathrm{fr}}\left(\rho_{\mathcal{F}} \right)=\displaystyle\prod_{\mathfrak{p} \in P^1\left(\mathbb{I} \right)}\left(\mathrm{ord}_{\mathfrak{p}}\mathcal{L}_p\left(\omega^{k-1}; \gamma^{\prime} \right)+1\right).
\end{equation*}
\item[(2)]Let $D\left(\mathcal{L}_p\left(\omega^{k-1}; \gamma^{\prime} \right) \right)$ be the set of all ideals of $\mathbb{I}$ which divide $\left(\mathcal{L}_p\left(\omega^{k-1}; \gamma^{\prime} \right)   \right)$. Then there exists a set $\mathscr{T}$ of a system of representatives of $\mathscr{L}^{\mathrm{fr}}\left(\rho_{\mathcal{F}}\right)$ which contains $\mathbb{T}_{\mathcal{F}}$ such that we have the following bijection
\begin{equation*}
\mathscr{T} \rightarrow D\left(\mathcal{L}_p\left(\omega^{k-1}; \gamma^{\prime} \right) \right), \mathbb{T} \mapsto \mathfrak{a}
\end{equation*}
such that $\left.\mathbb{T}\right/\mathbb{T}_{\mathcal{F}} \stackrel{\sim}{\rightarrow} \left.\mathbb{I}\right/\mathfrak{a}\,\left(\mathbf{1} \right)$, where $\left.\mathbb{I}\right/\mathfrak{a}\,\left(\mathbf{1} \right)$ denotes the $\mathbb{I}$-module $\left.\mathbb{I}\right/\mathfrak{a}$ on which $G_{\mathbb{Q}}$ acts trivially. 

\item[(3)]The algebraic $p$-adic $L$-function $L_p^{\mathrm{alg}}(  \mathcal{T}_{\mathcal{F}}^{\left(i \right)}  )$ is the minimal one under divisibility among the set of $L_p^{\mathrm{alg}}\left(\mathcal{T}^{\left(i\right)} \right)$ for all $G_{\mathbb{Q}}$-stable lattices of $\mathbb{V}_{\mathcal{F}}$. Furthermore, we have the following equality:
\begin{equation*}
\mathscr{L}_p^{\mathrm{alg}}\left(\rho_{\mathcal{F}}^{\mathrm{n.ord}, \left(i \right)} \right)=
\begin{cases}
\left\{ \left(L_p^{\mathrm{alg}}(  \mathcal{T}_{\mathcal{F}}^{\left(i \right)}  ) \right) \right\}& \left(i: \text{odd} \right) \\
L_p^{\mathrm{alg}}(  \mathcal{T}_{\mathcal{F}}^{\left(i \right)}  ) D\left(\mathcal{L}_p\left(\omega^{k-1}; \gamma^{\prime} \right) \right)& \left(i: \text{even} \right).
\end{cases}
\end{equation*}
\end{enumerate}
\end{Gal}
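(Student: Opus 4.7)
The plan is to apply Ochiai's formula (Theorem \ref{Ochiai thm}) with $\mathbb{T}' = \mathbb{T}_{\mathcal{F}}$ and a variable $G_{\mathbb{Q}}$-stable $\mathbb{I}$-free lattice $\mathbb{T}$, so the first task is to classify such lattices. The starting point is the $G_{\mathbb{Q}}$-stable Mazur--Wiles filtration of the geometric lattice,
\begin{equation*}
0 \longrightarrow F^{+}\mathbb{T}_{\mathcal{F}} \longrightarrow \mathbb{T}_{\mathcal{F}} \longrightarrow \mathbb{T}_{\mathcal{F}}/F^{+}\mathbb{T}_{\mathcal{F}} \longrightarrow 0,
\end{equation*}
whose quotient is unramified with residual character $\mathbf{1}$ and whose sub reduces to $\omega^{k-1}$. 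A key input, which comes from Mazur--Wiles together with the author's previous work cited in the excerpt, is that the reducibility ideal of $\rho_{\mathcal{F}}$ equals $(\mathcal{L}_p(\omega^{k-1}; \gamma'))$; this identification is what forces divisibility by $\mathcal{L}_p$ in the classification below.

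For parts (1) and (2), I would first show that every $G_{\mathbb{Q}}$-stable $\mathbb{I}$-free lattice is homothetic to one containing $\mathbb{T}_{\mathcal{F}}$, and that the quotient $\mathbb{T}/\mathbb{T}_{\mathcal{F}}$ is a cyclic $\mathbb{I}$-module on which $G_{\mathbb{Q}}$ acts trivially. The underlying reason is a Schur-type analysis: since the residual representation is a non-split extension with $\omega^{k-1}$ as sub and $\mathbf{1}$ as quotient, any non-homothetic enlargement of $\mathbb{T}_{\mathcal{F}}$ within $\mathbb{V}_{\mathcal{F}}$ can only be in the $\mathbf{1}$-direction. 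The annihilator $\mathfrak{a} \subset \mathbb{I}$ of such a cyclic quotient must then divide $(\mathcal{L}_p)$ --- this is the step where the identification of the reducibility ideal enters --- and conversely every ideal dividing $(\mathcal{L}_p)$ is realised by a unique $\mathbb{T}$, constructed as a pushout along the canonical surjection $\mathbb{I} \twoheadrightarrow \mathbb{I}/\mathfrak{a}$. The counting formula in (1) is then the elementary fact that in the two-dimensional regular local ring $\mathbb{I}=\Lambda$, in which every height-one prime is principal, the number of ideals of $\mathbb{I}$ dividing $(\mathcal{L}_p)$ equals $\prod_{\mathfrak{p} \in P^1(\mathbb{I})}(\mathrm{ord}_{\mathfrak{p}} \mathcal{L}_p + 1)$.

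For part (3), I would feed $\mathbb{T}' = \mathbb{T}_{\mathcal{F}}$ and a general $\mathbb{T}$ from the system $\mathscr{T}$ into Theorem \ref{Ochiai thm}. Two observations streamline the two length computations. First, because the quotient $\mathbb{T}/\mathbb{T}_{\mathcal{F}} \cong \mathbb{I}/\mathfrak{a}\,(\mathbf{1})$ lives entirely in the unramified quotient direction, one checks $F^{+}\mathbb{T} = F^{+}\mathbb{T}_{\mathcal{F}}$, so the second length in the exponent of Ochiai's formula vanishes identically. Second, complex conjugation $c$ acts on $\mathcal{T}^{(i)}/\mathcal{T}_{\mathcal{F}}^{(i)} \cong (\mathbb{I}/\mathfrak{a}) \hat{\otimes}_{\mathbb{Z}_p} \mathbb{Z}_p[[\Gamma]](\tilde{\kappa}^{-1}) \otimes_{\mathbb{Z}_p} \omega^{i}$ by $(-1)^{i}$, since $\tilde{\kappa}(c) = 1$ (the cyclotomic $\mathbb{Z}_p$-extension is totally real) and $G_{\mathbb{Q}}$ acts trivially on $\mathbb{I}/\mathfrak{a}$. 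Hence the $G_{\mathbb{R}}$-coinvariants vanish for odd $i$, and equal the full module for even $i$. Substituting into Theorem \ref{Ochiai thm}: for odd $i$, $L_p^{\mathrm{alg}}(\mathcal{T}^{(i)})$ and $L_p^{\mathrm{alg}}(\mathcal{T}_{\mathcal{F}}^{(i)})$ generate the same ideal; for even $i$, the ratio equals $\prod_{\mathfrak{P}} \mathfrak{P}^{\mathrm{length}_{\mathcal{R}_{\mathfrak{P}}}(\mathbb{I}/\mathfrak{a} \otimes_{\mathbb{I}} \mathcal{R})_{\mathfrak{P}}} = \mathfrak{a}\mathcal{R}$. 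Combined with the bijection of part (2), this gives the two-case formula in (3) and in particular shows that $L_p^{\mathrm{alg}}(\mathcal{T}_{\mathcal{F}}^{(i)})$ divides every other $L_p^{\mathrm{alg}}(\mathcal{T}^{(i)})$.

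The main obstacle is the classification step, specifically showing that $\mathbb{T} \mapsto \mathrm{Ann}_{\mathbb{I}}(\mathbb{T}/\mathbb{T}_{\mathcal{F}})$ is a well-defined bijection between the system of representatives $\mathscr{T}$ and $D(\mathcal{L}_p)$. Both the surjectivity (every divisor of $\mathcal{L}_p$ arises) and the injectivity (non-isomorphic ideal quotients give non-isomorphic lattices) rest on the reducibility ideal theorem of Mazur--Wiles together with the non-splitness of the residual extension; once this is in hand, the rest is a clean application of Theorem \ref{Ochiai thm}.
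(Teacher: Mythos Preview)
Your strategy for part~(3) matches the paper's exactly: once the classification in~(2) is established, Lemma~\ref{11081} gives $F^{+}\mathbb{T}=F^{+}\mathbb{T}_{\mathcal{F}}$, complex conjugation acts by $(-1)^{i}$ on the cyclic quotient, and Theorem~\ref{Ochiai thm} finishes. That part is fine.

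The classification step, however, contains a genuine error and also diverges from the paper's route. You call the Wiles filtration $0\to F^{+}\mathbb{T}_{\mathcal{F}}\to\mathbb{T}_{\mathcal{F}}\to F^{-}\mathbb{T}_{\mathcal{F}}\to 0$ a ``$G_{\mathbb{Q}}$-stable'' filtration; it is only $D_p$-stable (Theorem~\ref{W88}), so it cannot by itself tell you the shape of the global residual extension. More seriously, your assertion that the residual representation of $\mathbb{T}_{\mathcal{F}}$ is a non-split extension with $\omega^{k-1}$ as sub and $\mathbf{1}$ as quotient is backwards: Ohta's result (used in the proof of Corollary~\ref{canonical}) gives the exact sequence $0\to\mathbb{I}/\mathbb{J}\,(\mathbf{1})\to\mathbb{T}_{\mathcal{F}}/\mathbb{J}\mathbb{T}_{\mathcal{F}}\to\mathbb{I}/\mathbb{J}\,(\chi\kappa_{\mathrm{cyc}}\kappa_{\mathrm{cyc}}^{\mathrm{univ}})\to 0$, so $\mathbf{1}$ is the sub. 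Your stated reasoning (``$\omega^{k-1}$ sub, $\mathbf{1}$ quotient, hence enlargements in the $\mathbf{1}$-direction'') would, if taken at face value, yield the opposite conclusion, since the unique $G_{\mathbb{Q}}$-stable line in a non-split extension is the sub, and enlargements $\mathbb{T}\supsetneq\mathbb{T}_{\mathcal{F}}$ with $\mathbb{T}/\mathbb{T}_{\mathcal{F}}$ killed by $\mathfrak{p}$ correspond to $G_{\mathbb{Q}}$-stable lines in $\mathfrak{p}^{-1}\mathbb{T}_{\mathcal{F}}/\mathbb{T}_{\mathcal{F}}\cong\mathbb{T}_{\mathcal{F}}/\mathfrak{p}\mathbb{T}_{\mathcal{F}}$. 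The correct input (Ohta) does give your desired conclusion, but you have not invoked it, and it is not a light fact.

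The paper avoids this dependence by taking a different path: it first proves an abstract classification (Theorem~\ref{yandongmain}) by localising at each height-one prime $\mathfrak{p}$, using the DVR count of Proposition~\ref{22} to get $\mathrm{ord}_{\mathfrak{p}}\mathbb{J}+1$ lattices over $\mathbb{I}_{\mathfrak{p}}$, and then gluing via the reflexive-lattice formula $\mathbb{T}(j_1,\dots,j_r)=\bigcap_{\mathfrak{p}}\mathbb{T}_{\mathfrak{p}}^{(j)}$ (Lemma~\ref{1108}). This produces an abstract minimal lattice $\mathbb{T}^{\mathrm{min}}$ and the bijection with $D(\mathbb{J}^{**})$ without ever needing to know which lattice $\mathbb{T}_{\mathcal{F}}$ is. Only afterwards (Corollary~\ref{canonical}) is Ohta's theorem brought in to identify $\mathbb{T}^{\mathrm{min}}=\mathbb{T}_{\mathcal{F}}$, by a contradiction argument showing that otherwise there would be too many free lattices. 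Your approach front-loads the hard geometric input and leaves the ``Schur-type analysis'' over the two-dimensional ring $\mathbb{I}$ as a sketch; the paper's localisation-and-glue argument is what actually makes that analysis rigorous.
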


By Theorem A we have that to determine the set $\mathscr{L}_p^{\mathrm{alg}}\left(\rho_{\mathcal{F}}^{\mathrm{n. ord}, \left(i \right)} \right)$, it is enough to calculate $L_p^{\mathrm{alg}}(  \mathcal{T}_{\mathcal{F}}^{\left(i \right)}  )$. By combining Theorem A with a recent calculation of Bella\"iche and Pollack \cite{BP} on the one-variable cyclotomic deformation case, we obtain a result of $L_p^{\mathrm{alg}}(  \mathcal{T}_{\mathcal{F}}^{\left(0 \right)}  )$ as follows.

\begin{Liu1314}[Theorem \ref{910115}]
Let $\left(p, k \right)$ be an irregular pair. Assume ($\Lambda$) and the following condition
\begin{list}{}{}
\item[($p$Four)]We have the equality $\left(\mathcal{L}_p\left(\omega^{k-1}; \gamma^{\prime} \right) \right)=\left(a\left(p, \mathcal{F} \right)-1 \right)$ in $\mathbb{I}$.
\end{list}
Then $L_p^{\mathrm{alg}}(  \mathcal{T}_{\mathcal{F}}^{\left(0 \right)}  )$ is a unit of $\mathcal{R}$ and $$\mathscr{L}_p^{\mathrm{alg}}\left(\rho_{\mathcal{F}}^{\mathrm{n. ord}, \left(i \right)} \right)=D\left(\mathcal{L}_p\left(\omega^{k-1}; \gamma^{\prime} \right) \right).$$
\end{Liu1314}

\begin{plan}
In \S 2, we study some properties of $G$-stable lattices of the representation over a discrete valuation ring, which is used to prove Theorem A. In \S 3, we recall some known results on Hida deformation and Selmer group. In \S 4, we prove Theorem A and we give some examples of $\mathscr{L}^{\mathrm{fr}}\left(\rho_{\mathcal{F}} \right)$ and $\mathscr{L}_p^{\mathrm{alg}}\left(\rho_{\mathcal{F}}^{\mathrm{n. ord}, \left(i \right)} \right)$ in \S 5. By our arguments in \S 4, we can define a graph structure on the set of the isomorphic classes of $G_{\mathbb{Q}}$-stable $\mathbb{I}$-free lattices which we will study in the appendix.
\end{plan}

\begin{notation}
Let $R$ be a commutative domain and $K$ the field of fractions of $R$. For a finite dimensional $K$-vector space $V$ and a linear representation of a group $G$: $$\rho : G \rightarrow \mathrm{Aut}_{K}\left(V \right),$$ we denote by $\mathscr{C}\left(\rho \right)$ (resp. $\mathscr{C}^{\mathrm{fr}}\left(\rho \right)$) the set of homothetic classes of $G$-stable lattices (resp. $G$-stable $R$-free lattices) of $\rho$ and by $\mathscr{L}\left(\rho \right)$ (resp. $\mathscr{L}^{\mathrm{fr}}\left(\rho \right)$) the set of isomorphic classes of $G$-stable lattices (resp. $G$-stable $R$-free lattices) of $\rho$.

For a prime $l$, we denote by $I_l$ the inertia subgroup of the decomposition group $D_l$ at $l$. For a Dirichlet character $\theta$ modulo $M$, by abuse of notation, we sometimes denote by $\theta$ the character of $G_{\mathbb{Q}}$ composed with $G_{\mathbb{Q}} \twoheadrightarrow \mathrm{Gal}\left(\mathbb{Q}\left(\mu_{M} \right)/\mathbb{Q} \right) \stackrel{\sim}{\rightarrow} \left(\mathbb{Z}/M\mathbb{Z} \right)^{\times}$. We denote by $\kappa_{\mathrm{cyc}}$ (resp. $\kappa^{\prime}$) the isomorphism $\Gamma \stackrel{\sim}{\rightarrow} 1+p\mathbb{Z}_p$ (resp. $\Gamma^{\prime} \stackrel{\sim}{\rightarrow} 1+p\mathbb{Z}_p$). For later convenience, we choose $\gamma$ (resp. $\gamma^{\prime}$) a topological generator of $\Gamma$ (resp. $\Gamma^{\prime}$) such  that $\kappa_{\mathrm{cyc}}\left(\gamma\right)=\kappa^{\prime}\left(\gamma^{\prime}\right)=:u$. For an element $a \in \mathbb{Z}_p^{\times}$, write $a=\omega\left(a \right)\langle a \rangle$ under $\mathbb{Z}_p^{\times} \stackrel{\sim}{\rightarrow} \mu_{p-1}\times \left(1+p\mathbb{Z}_p\right)$ and we denote by $s_a$ the element of $\mathbb{Z}_p$ such that $\langle a \rangle=u^{s_a}$.

For a Dirichlet character $\theta$, write $\Lambda_{\theta}=\mathbb{Z}_p[\theta][[\Gamma^{\prime}]]$. For a fixed Noetherian integrally closed domain $R$, we denote by $P^{1}\left(R \right)$ the set of all height-one prime ideals of $R$. For a finitely generated $R$-module $M$, we denote by $M^{*}$ the $R$-linear dual of $M$ and by $M^{**}$ the double $R$-linear dual of $M$.

\end{notation}

\begin{ack}
The author expresses his sincere gratitude to Professor Tadashi Ochiai for spending a lot of time to read the manuscript carefully, giving the author useful comments and pointing out mistakes. He also thanks to Kenji Sakugawa for reading the manuscript, stimulating discussion and correcting several mistakes.
\end{ack}

\section{Stable lattices in the two-dimensional representation over a discrete valuation ring}
In this section, we study some properties of $G$-stable lattices of a representation over the field of fractions of a discrete valuation ring. The tool of counting the number of the isomorphic classes of stable lattices is the ideal of reducibility which is defined by Bella\"iche-Chenevier \cite{Bell2} for the residually reducible case. Although it might be known for the experts that the ideal of reducibility coincide with the whole ring if the residual representation is irreducible, we obtain the following proposition without the assumption on the residually reducibility. This will be used in \S 4 to verify whether the isomophic classes of $G_{\mathbb{Q}}$-stable $\mathbb{I}_{\mathfrak{p}}$-lattice in a Hida deformation for a height-one prime ideal $\mathfrak{p}$ is unique or not.
\begin{prp}\label{21}
Let $R$ be a local domain with maximal ideal $\mathfrak{m}$ and $K$ the field of fractions of $R$. We assume that the characteristic of the residue field $R/\mathfrak{m}$ is not two. Let $V$ be a vector space of dimension two over $K$ and $$\rho : G \rightarrow \mathrm{Aut}_K\left(V \right)$$ a linear representation of a group $G$ such that $\mathrm{tr}\rho\left(G \right) \subset R$. Assume that there exists an element $g_0 \in G$ such that the characteristic polynomial of $\rho\left(g_0\right)$ has roots in $R$ which are distinct modulo $\mathfrak{m}$. Then there exists a unique ideal $I\left(\rho \right)$ of $R$ such that for any ideal $J$, $I\left(\rho \right) \subset J$ if and only if there exist characters $\vartheta_1, \vartheta_2 : G \rightarrow \left(R/J \right)^{\times}$ such that $\mathrm{tr}\rho\ \mathrm{mod}\ J=\vartheta_1+\vartheta_2$. Furthermore, if $J \subset \mathfrak{m}$, the set of such characters $\set{\vartheta_1, \vartheta_2}$ is unique. 
\end{prp}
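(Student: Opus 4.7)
The plan is to diagonalize $\rho(g_0)$ over $K$ and define $I(\rho)$ via the off-diagonal matrix entries in the $g_0$-eigenbasis. Since the residue characteristic is not $2$, the element $2$ is a unit of the local ring $R$, so the Newton identity $\det\rho(g) = \tfrac{1}{2}(\mathrm{tr}\rho(g)^2 - \mathrm{tr}\rho(g^2))$ places $\det\rho$ in $R$, and then in $R^\times$ because $\det\rho(g)\det\rho(g^{-1}) = 1$. Pick a $K$-basis of $V$ in which $\rho(g_0) = \mathrm{diag}(\alpha_0,\beta_0)$ and write $\rho(g) = \left(\begin{smallmatrix} a(g) & b(g) \\ c(g) & d(g) \end{smallmatrix}\right)$. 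Inverting the two linear relations $a(g)+d(g) = \mathrm{tr}\rho(g)$ and $\alpha_0 a(g)+\beta_0 d(g) = \mathrm{tr}\rho(g_0 g)$ using $\alpha_0-\beta_0 \in R^\times$ places $a(g),d(g)$ in $R$; the same inversion applied to $\mathrm{tr}\rho(gh)$ and $\mathrm{tr}\rho(g_0 gh)$ places $b(g)c(h)$ and $c(g)b(h)$ in $R$ for all $g,h\in G$. We then define $I(\rho)$ to be the ideal of $R$ generated by $\{b(g)c(h) : g,h \in G\}$.

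For the forward direction, assuming $I(\rho) \subset J$, I would set $\vartheta_1(g) := a(g) \bmod J$ and $\vartheta_2(g) := d(g) \bmod J$. The relation $a(gh) = a(g)a(h)+b(g)c(h)$ coming from $\rho(gh)=\rho(g)\rho(h)$ has its cross term in $J$, so $\vartheta_1$ is multiplicative, and similarly for $\vartheta_2$; furthermore $\vartheta_1(g)\vartheta_2(g) \equiv a(g)d(g)-b(g)c(g) = \det\rho(g) \pmod J$ is a unit, so each $\vartheta_i$ lands in $(R/J)^\times$ and $\vartheta_1+\vartheta_2 \equiv \mathrm{tr}\rho \pmod J$ by construction.

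For the reverse direction, suppose such characters $\vartheta_1,\vartheta_2$ exist. Newton's identity (again using that $2$ is a unit) forces $\vartheta_1\vartheta_2 \equiv \det\rho \pmod J$, hence $\vartheta_i(g_0)$ are roots of $(X-\alpha_0)(X-\beta_0)$ in $R/J$. The hard step will be concluding $\vartheta_i(g_0) \in \{\alpha_0,\beta_0\}$: from $(c-\alpha_0)(c-\beta_0)=0$ with $\alpha_0-\beta_0 \in (R/J)^\times$, if both factors lay in the maximal ideal of the local ring $R/J$ their difference $\beta_0-\alpha_0$ would too, contradicting invertibility; so one factor is a unit and the other is zero. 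This step genuinely needs the localness of $R/J$ (explicit counterexamples exist over rings like $\mathbb{Z}/6\mathbb{Z}$), and it is the point at which the hypothesis that $R$ is local and $J$ is proper is used. After relabeling, assume $\vartheta_1(g_0)\equiv\alpha_0$ and $\vartheta_2(g_0)\equiv\beta_0$. The $2\times 2$ Vandermonde with rows $(1,1)$ and $(\alpha_0,\beta_0)$ is invertible over $R/J$, and combining $\vartheta_1(g)+\vartheta_2(g) \equiv a(g)+d(g)$ with $\alpha_0\vartheta_1(g)+\beta_0\vartheta_2(g) \equiv \alpha_0 a(g)+\beta_0 d(g)$ (the second obtained from multiplicativity evaluated at $g_0 g$) yields $\vartheta_1 \equiv a$ and $\vartheta_2 \equiv d \pmod J$ on all of $G$. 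This simultaneously proves the uniqueness of $\{\vartheta_1,\vartheta_2\}$ for $J \subset \mathfrak{m}$ and, via $b(g)c(h) = a(gh)-a(g)a(h) \equiv 0 \pmod J$, gives $I(\rho) \subset J$; the uniqueness of $I(\rho)$ is then automatic from the biconditional characterisation.
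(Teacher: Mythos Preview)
Your proof is correct and follows the same overall strategy as the paper: diagonalize $\rho(g_0)$, use the trace relations with the unit $\alpha_0-\beta_0$ to place $a(g),d(g),b(g)c(h)$ in $R$, and define $I(\rho)$ as the ideal generated by the cross terms $b(g)c(h)$. The forward direction is identical.

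The only real difference lies in the converse and the uniqueness clause. The paper first reduces modulo $\mathfrak{m}$, argues that the residual characters $\psi_1,\psi_2$ are distinct by evaluating at $g_0$, and then invokes \cite[Lemme~1]{Bell2} as a black box to identify $\{\vartheta_1,\vartheta_2\}$ with $\{a\bmod J,\,d\bmod J\}$ and to obtain $b(g)c(h)\in J$; uniqueness of $\{\psi_1,\psi_2\}$ is then proved by a separate ad hoc argument (the ``$g_0h$ trick''), after which \cite[Lemme~1]{Bell2} is invoked a second time to lift uniqueness to $R/J$. Your argument is more self-contained: you work directly in $R/J$, use its locality to force $\vartheta_i(g_0)\in\{\alpha_0,\beta_0\}$ via the zero-divisor argument, and then a single $2\times2$ Vandermonde inversion simultaneously yields $\vartheta_1\equiv a$, $\vartheta_2\equiv d$, the inclusion $I(\rho)\subset J$, and the uniqueness of $\{\vartheta_1,\vartheta_2\}$. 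This is essentially what the cited lemma of Bella\"iche--Chenevier does under the hood, so your route is not genuinely different, just unpacked; it has the advantage of making the role of the hypotheses (locality of $R/J$, residue characteristic $\neq 2$) fully explicit.
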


Proposition \ref{21} is proved in the same way as \cite[Lemme 1]{Bell2}. However, we add the proof for later reference.
\begin{proof}
The uniqueness of $I\left(\rho \right)$ follows by its property. We prove the existence. Let $\lambda_1$ and $\lambda_2$ be the roots of the characteristic polynomial of $\rho\left(g_0\right)$. Let us choose a $K$-basis $\set{e_1, e_2}$ of $V$ such that $\rho\left(g_0\right)=\begin{pmatrix} \lambda_1 & 0 \\ 0 & \lambda_2 \end{pmatrix}$. For any $g \in G$, write $\rho\left(g \right)=\begin{pmatrix} a\left(g \right) & b\left(g \right) \\ c\left(g \right) & d\left(g \right) \end{pmatrix}$ with respect to the basis $\set{e_1, e_2}$. Then we have the following equalities:
\begin{equation}\label{e0}
\begin{cases}
\mathrm{tr}\rho(g_0g)=\lambda_1a(g)+\lambda_2d(g) \in R \\
\mathrm{tr}\rho(g)=a(g)+d(g) \in R.
\end{cases}
\end{equation}
Since $R$ is a local domain and $\lambda_1 \not\equiv \lambda_2\ (\mathrm{mod}\ \mathfrak{m})$, $\lambda_1-\lambda_2$ is a unit of $R$. Thus $a(g), d(g) \in R$ by the equalities \eqref{e0}. Since $b(g)c(g^{\prime})=a(gg^{\prime})-a(g)a(g^{\prime})$, we have $b(g)c(g^{\prime}) \in R$ for any $g, g^{\prime} \in G$.

Let $I\left(\rho \right)$ be the $R$-submodule of $K$ which is generated by $b(g)c(g^{\prime})$ for all $g, g^{\prime} \in G$. Then $I\left(\rho \right)$ is an ideal of $R$. Let us take an ideal $J$ of $R$. The case when $J=R$ is obvious. Hence it is sufficient to consider the case when $J \subset \mathfrak{m}$. Assume $I\left(\rho \right) \subset J$. By the definition of $I\left(\rho \right)$, we have that $$a\,\mathrm{mod}\,I\left(\rho \right) : G \rightarrow \left(R/I\left(\rho \right) \right)^{\times}, g \mapsto a(g)\,\mathrm{mod}\,I\left(\rho \right)$$ and $$d\,\mathrm{mod}\,I\left(\rho \right) : G \rightarrow \left(R/I\left(\rho \right) \right)^{\times}, g \mapsto d(g)\,\mathrm{mod}\,I\left(\rho \right)$$are characters. We denote by $\vartheta_1$ (resp. $\vartheta_2$) : $G \rightarrow \left(R/J \right)^{\times}$ the composition of $a\,\mathrm{mod}\,I\left(\rho \right)$ (resp. $d\,\mathrm{mod}\,I\left(\rho \right)$) with the surjection $\left.R\right/I\left(\rho \right) \twoheadrightarrow R/J$. Then we have $\mathrm{tr}\rho\ \mathrm{mod}\ J=\vartheta_1+\vartheta_2$. 

For the converse, we denote by $\psi_i : G \rightarrow \left(R/\mathfrak{m} \right)^{\times}\ (i=1, 2)$ the composition of $\vartheta_i$ with the surjection $R/J \twoheadrightarrow R/\mathfrak{m}$. Then we have $\mathrm{tr}\,\rho\,\mathrm{mod}\,\mathfrak{m}=\psi_1+\psi_2$. Since for any $g \in G$, $$\mathrm{tr}\,\rho(g)^2-\mathrm{tr}\,\rho(g^2)=2\cdot\mathrm{det}\,\rho(g)$$ and $\mathrm{char}(R/\mathfrak{m}) \neq 2$, we have $\mathrm{det}\,\rho\,\mathrm{mod}\,\mathfrak{m}=\psi_1\psi_2$. Thus the mod\ $\mathfrak{m}$ characteristic polynomial of $\rho\left(g_0 \right)$ is $$\left(X-\psi_1\left(g_0 \right)\right)\left(X-\psi_2\left(g_0 \right)\right)=\left(X-\overline{\lambda}_1 \right)\left(X-\overline{\lambda}_2 \right),$$ where $\overline{\lambda}_i=\lambda_i\ \mathrm{mod}\ \mathfrak{m}\ (i=1, 2).$ Then we have $\set{\psi_1\left(g_0 \right), \psi_2\left(g_0 \right)}=\set{\overline{\lambda}_1, \overline{\lambda}_2}.$ Since $\lambda_1 \not\equiv \lambda_2 \ \left(\mathrm{mod}\ \mathfrak{m} \right)$, we have $\psi_1 \neq \psi_2$. Then by \cite[Lemme 1]{Bell2}, we have $\set{\vartheta_1, \vartheta_2}=\set{a\,\mathrm{mod}\,J, d\,\mathrm{mod}\,J}$ and $b\left(g \right)c\left(g^{\prime} \right) \in J$ for any $g, g^{\prime} \in G$. This implies $I\left(\rho \right) \subset J$. 

Now we prove the uniqueness of the set of characters $\set{\vartheta_1, \vartheta_2}$. First we prove the uniqueness of $\set{\psi_1, \psi_2}$ by contradiction. Assume we have another set of characters $\set{\psi^{\prime}_1, \psi^{\prime}_2}\neq\set{\psi_1, \psi_2}$ such that $\mathrm{tr}\,\rho\,\mathrm{mod}\,\mathfrak{m}=\psi^{\prime}_1+\psi^{\prime}_2$. By considering the $\mathrm{mod}\,\mathfrak{m}$ characteristic polynomial of $\rho\left(g \right)$, we have 
\begin{equation}\label{190224}
\psi^{\prime}_1\left(g \right)=\psi_1\left(g \right)\ \text{or}\ \psi^{\prime}_1\left(g \right)=\psi_2\left(g \right)
\end{equation}
for any $g \in G$. We may assume $\psi^{\prime}_1\left(g_0 \right)=\psi_1\left(g_0 \right)$ without loss of generality. Assume there exists an element $h \in G$ such that $\psi^{\prime}_1\left(h \right)=\psi_2\left(h \right)\neq\psi_1\left(h \right)$. Then we have
\begin{equation}\label{190419}
\psi^{\prime}_1\left(g_0 h \right)=\psi_1\left(g_0 \right)\psi_2\left(h \right).
\end{equation}
By \eqref{190224}, the equality \eqref{190419} contradicts to $\psi_1\left(g_0 \right) \neq \psi_2\left(g_0 \right)$ or $\psi_1\left(h \right) \neq \psi_2\left(h \right)$. Thus, under the assumption $\psi_1^{\prime}\left(g_0 \right)=\psi_1\left(g_0\right)$, we must have $\psi_1^{\prime}=\psi_1$ and $\psi_2^{\prime}=\psi_2$.

Now we complete the proof of Proposition \ref{22}. Suppose that we have another set of characters $\set{\vartheta_1^{\prime}, \vartheta_2^{\prime}}$ of $G$ with values in $\left(R/J \right)^{\times}$ such that $\mathrm{tr}\,\rho\,\mathrm{mod}\,J=\vartheta_1^{\prime}+\vartheta_2^{\prime}$. Then we have $\set{\vartheta^{\prime}_1\,\mathrm{mod}\,\mathfrak{m}, \vartheta^{\prime}_2\,\mathrm{mod}\,\mathfrak{m}}=\set{\psi_1, \psi_2}$ by the uniqueness of the set $\set{\psi_1, \psi_2}$, where $\vartheta^{\prime}_i\,\mathrm{mod}\,\mathfrak{m} : G \rightarrow \left(R/\mathfrak{m} \right)^{\times}\ (i=1, 2)$ is the composition of $\vartheta^{\prime}_i$ with the surjection $R/J \twoheadrightarrow R/\mathfrak{m}$. Then $\set{\vartheta_1^{\prime}, \vartheta_2^{\prime}}=\set{a\,\mathrm{mod}\,J, d\,\mathrm{mod}\,J}$ holds by \cite[Lemme 1]{Bell2}. Thus the uniqueness of $\set{\vartheta_1, \vartheta_2}$ follows. This completes the proof of Proposition \ref{21}.

\end{proof}

\begin{dfn}\label{152}
Let us keep the assumptions and the notation of Proposition \ref{21}. We call $I\left(\rho \right)$ the \textit{ideal of reducibility of $R$ corresponding to $\rho$} (cf. \cite[\S 2.3]{Bell2}). 
\end{dfn}

We recall the definition of lattice and stable lattice as follows:

\begin{dfn}\label{lattice}
Let $R$ be a commutative Noetherian integrally closed domain with field of fractions $K$. Let $V$ be a finite dimensional $K$-vector space. We say that a $R$-submodule $T$ of $V$ is a \textit{lattice} of $V$ if and only if $T$ is finitely generated and $T \otimes_R K=V$. Let $$\rho : G \rightarrow \mathrm{Aut}_{K}\left(V \right)$$ be a linear representation of a group $G$, we say that $T$ is a \textit{$G$-stable lattice of $V$} if $T$ is a lattice and $\rho\left(G \right)T=T$.
\end{dfn}

For the remainder of this section, let $R$ be a discrete valuation ring. We study the number of isomorphic classes of $G$-stable lattices by means of the ideal of reducibility. Firstly we recall the following proposition.

\begin{prp}[{\cite[Chap. 7, \S4.1, Corollary to Proposition 4]{Bour}}]\label{dvr1}
Let $A$ be a discrete valuation ring with $\varpi$ a fixed uniformizer and $K$ the field of fractions of $A$. Let $V$ be a finite dimensional $K$-vector space. We denote by $\hat{A}=\plim[j]A/\varpi^j A$ the $\varpi$-adic completion of $A$ and by $\hat{K}$ the field of fractions of $\hat{A}$. Let $\mathcal{C}_{A}$ (resp. $\mathcal{C}_{\hat{A}}$) be the category of $A$-lattices of $V$ (resp. the category of $\hat{A}$-lattices of $V\otimes_{K}\hat{K}$) and $F_1, F_2$ the following functors:$$F_1: \mathcal{C}_{A} \rightarrow \mathcal{C}_{\hat{A}}, T \mapsto T\otimes_A\hat{A},$$ $$F_2: \mathcal{C}_{\hat{A}} \rightarrow \mathcal{C}_{A}, \hat{T} \mapsto \hat{T}\cap V.$$Then we have $F_2\circ F_1=\mathrm{id}_{\mathcal{C}_{A}}$ and $F_1\circ F_2=\mathrm{id}_{\mathcal{C}_{\hat{A}}}$ i.e. the category $\mathcal{C}_{A}$ and $\mathcal{C}_{\hat{A}}$ are equivalent.

\end{prp}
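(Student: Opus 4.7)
The plan is to leverage the fact that $A$ and $\hat{A}$ are both discrete valuation rings, so every object of $\mathcal{C}_A$ and $\mathcal{C}_{\hat{A}}$ is free of rank $n := \dim_K V$, together with the elementary identity $K \cap \hat{A} = A$ inside $\hat{K}$. Both required identities will ultimately be coefficient-comparison arguments with respect to a carefully chosen basis.

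For $F_2 \circ F_1 = \mathrm{id}_{\mathcal{C}_A}$, I would take $T \in \mathcal{C}_A$ and fix an $A$-basis $e_1, \ldots, e_n$ of $T$. The same family is simultaneously an $\hat{A}$-basis of $T \otimes_A \hat{A}$, a $K$-basis of $V$, and a $\hat{K}$-basis of $V \otimes_K \hat{K}$. Any $v \in (T \otimes_A \hat{A}) \cap V$ admits coordinates in $\hat{A}$ (as an element of $T \otimes_A \hat{A}$) and coordinates in $K$ (as an element of $V$); uniqueness of the expansion in $V \otimes_K \hat{K}$ forces these coordinates to coincide and to lie in $K \cap \hat{A} = A$, so $v \in T$. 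The reverse inclusion $T \subseteq (T \otimes_A \hat{A}) \cap V$ is clear.

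For $F_1 \circ F_2 = \mathrm{id}_{\mathcal{C}_{\hat{A}}}$, I must first check that $\hat{T} \cap V$ is actually an $A$-lattice of $V$; this is the heart of the proposition. Fix an $\hat{A}$-basis $\hat{f}_1, \ldots, \hat{f}_n$ of $\hat{T}$. Because $K$ is dense in $\hat{K}$, the $K$-subspace $V$ is dense in $V \otimes_K \hat{K}$ with respect to the $\varpi$-adic topology induced by $\hat{T}$, so I can choose $f_i \in V$ with $f_i \equiv \hat{f}_i \pmod{\varpi \hat{T}}$ for each $i$. A standard Nakayama-type argument then shows that $f_1, \ldots, f_n$ is still an $\hat{A}$-basis of $\hat{T}$ (any surjection from a free module of finite rank to itself is an isomorphism). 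Linear independence over $\hat{K}$ forces linear independence over $K$, so $f_1, \ldots, f_n$ is also a $K$-basis of $V$. The coefficient-comparison argument of the previous paragraph applies verbatim and yields $\hat{T} \cap V = \sum_{i=1}^n A f_i$, which is manifestly a free $A$-module of rank $n$, hence an $A$-lattice of $V$. Tensoring back up gives $(\hat{T} \cap V) \otimes_A \hat{A} = \sum_{i=1}^n \hat{A} f_i = \hat{T}$, as required.

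The main obstacle is the approximation step that produces an $\hat{A}$-basis of $\hat{T}$ inside $V$: without it, it is not even obvious that $\hat{T} \cap V$ is finitely generated over $A$, and a priori one might worry that intersecting with $V$ destroys the lattice property. Once this approximation is in hand, both identities collapse onto the transparent equality $K \cap \hat{A} = A$, and functoriality on morphisms is automatic since lattice maps are $K$-linear (respectively $\hat{K}$-linear) restrictions.
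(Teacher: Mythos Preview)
The paper does not supply its own proof of this proposition; it is stated as a citation from Bourbaki and used as a black box in the proof of Proposition~\ref{22}. Your argument is correct and is essentially the standard proof one finds in Bourbaki: the key inputs are the equality $K \cap \hat{A} = A$ (which follows from the compatibility of the valuations on $A$ and $\hat{A}$) and the density approximation that produces an $\hat{A}$-basis of $\hat{T}$ lying in $V$, after which both identities reduce to coordinate comparison. There is nothing to add.
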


\begin{prp}\label{22}
Let $A$ be a discrete valuation ring with $\varpi$ a fixed uniformizer and $K$ the filed of fractions of $A$. We assume the characteristic of the residue field $A/\left(\varpi \right)$ is not two. Let $V$ be a two-dimensional $K$-vector space and $$\rho : G \rightarrow \mathrm{Aut}_K\left(V \right)$$
a linear representation of a group $G$ such that $\rho$ has a $G$-stable lattice $T$. Assume that there exists an element $g_0 \in G$ such that the characteristic polynomial of $\rho\left(g_0\right)$ has roots in $A$ which are distinct modulo $\left(\varpi\right)$.  We denote by $I\left(\rho \right)$ the ideal of reducibility of $A$. Then we have the following statements:
\begin{enumerate}
\renewcommand{\labelenumi}{(\arabic{enumi})}
\item We have $I\left(\rho \right) \subset \left(\varpi \right)$ if and only if the semi-simplification $\left(T/\varpi T \right)^{\mathrm{ss}}$ is decomposed into two characters i.e. $\left(T/\varpi T \right)^{\mathrm{ss}} \cong A/\left(\varpi \right)(\psi_1) \oplus A/\left(\varpi \right)(\psi_2)$, where $\psi_i : G \rightarrow \left(A/\left(\varpi \right) \right)^{\times}$ ($i=1, 2$) is a character. Furthermore, we have $\psi_1 \neq \psi_2$ in this case. 
\item We have $\sharp\mathscr{C}\left(\rho \right)=\sharp\mathscr{L}\left(\rho \right)=\mathrm{ord}_{\varpi}I\left(\rho \right)+1$. 
\item More precisely, let $\mathrm{ord}_{\varpi}I\left(\rho \right)=n>0$. Then there exists a chain of $G$-stable lattices $T_n\supsetneq\cdots\supsetneq T_0$ which is a system of representatives of both $\mathscr{C}\left(\rho \right)$ and $\mathscr{L}\left(\rho \right)$ such that for any $1 \leq j \leq n$, $T_j/T_0$ is isomorphic to $A/\left(\varpi \right)^j$ as an $A$-module and $T_n/T_0$ is isomorphic to $A/\left(\varpi \right)^n\ (\vartheta_1^{\left(n \right)} )$ as an $A[G]$-module, where $\set{\vartheta_1^{\left(n \right)}, \vartheta_2^{\left(n \right)}}$ is the set of characters with values in $\left(\left.A\right/\left(\varpi \right)^n \right)^{\times}$ such that $\mathrm{tr}\,\rho\,\mathrm{mod}\,\left(\varpi \right)^n=\vartheta_1^{\left(n \right)}+\vartheta_2^{\left(n \right)}$.

\end{enumerate}
\end{prp}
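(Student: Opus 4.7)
The plan is to reduce to the case of complete $A$ via Proposition \ref{dvr1} (noting that the equivalence there manifestly respects $G$-action, since $T \otimes_A \hat{A}$ inherits the $G$-action from $T$ and $\hat{T} \cap V$ inherits it from $\hat{T}$), then to apply Proposition \ref{21} with $J = (\varpi)$ and $J = (\varpi)^n$ to control the reductions of $\rho$, and finally to parameterize all $G$-stable lattices by means of the eigenspace decomposition of $\rho(g_0)$.

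For (1), applying Proposition \ref{21} with $J = (\varpi)$ yields, whenever $I(\rho) \subset (\varpi)$, two characters $\psi_1, \psi_2 : G \to (A/(\varpi))^\times$ with $\mathrm{tr}\,\rho \equiv \psi_1 + \psi_2 \pmod{\varpi}$. The Brauer--Nesbitt theorem over the field $A/(\varpi)$ then identifies $(T/\varpi T)^{\mathrm{ss}}$ with $A/(\varpi)(\psi_1) \oplus A/(\varpi)(\psi_2)$, and the distinctness $\psi_1 \neq \psi_2$ is forced because $\{\psi_1(g_0), \psi_2(g_0)\}$ equals the set of roots $\bar\lambda_1, \bar\lambda_2$ of the mod $\varpi$ characteristic polynomial of $\rho(g_0)$. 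The converse follows immediately from Proposition \ref{21}.

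For (2) and (3), the crucial remark is that $\lambda_1 \lambda_2 = \det \rho(g_0) \in A^\times$ (since $\rho(g_0)$ is an automorphism of any $G$-stable lattice), whence $\lambda_1, \lambda_2 \in A^\times$ and $\lambda_1 - \lambda_2 \in A^\times$. The Hensel-type idempotents $\varepsilon_i := (\rho(g_0) - \lambda_j)/(\lambda_i - \lambda_j) \in A[\rho(g_0)]$ therefore preserve every $G$-stable lattice; fixing eigenvectors $v_1, v_2 \in V$ for $\lambda_1, \lambda_2$, every $G$-stable lattice takes the form $L_{\alpha, \beta} := \varpi^\alpha A v_1 \oplus \varpi^\beta A v_2$ for a unique pair $(\alpha, \beta) \in \mathbb{Z}^2$. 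Writing $a(g), b(g), c(g), d(g)$ for the matrix entries of $\rho(g)$ in this basis, the proof of Proposition \ref{21} identifies $I(\rho)$ with the product ideal $BC$, where $B = (b(g) : g \in G)$ and $C = (c(g) : g \in G)$, so $b_0 + c_0 = n$ for $b_0 := \mathrm{ord}_\varpi B$ and $c_0 := \mathrm{ord}_\varpi C$. A direct calculation shows that $L_{\alpha, \beta}$ is $G$-stable if and only if $-c_0 \leq \alpha - \beta \leq b_0$; since homothety classes are indexed by the value of $\alpha - \beta$, this gives $\sharp\mathscr{C}(\rho) = b_0 + c_0 + 1 = n + 1$.

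For the explicit chain in (3), I set $T_j := \varpi^{b_0 - j} A v_1 \oplus A v_2$ for $0 \leq j \leq n$; these are strictly nested $G$-stable lattices representing distinct homothety classes. Since $c(g) \in (\varpi)^{c_0}$, the $v_2$-component of $\rho(g)(\varpi^{b_0 - j} v_1)$ has $\varpi$-valuation at least $b_0 + c_0 - j = n - j \geq 0$, hence vanishes modulo $T_0$; this identifies $T_j/T_0$ with $A/(\varpi)^j$ as an $A$-module and pinpoints the $G$-action as $a \bmod (\varpi)^j$, which for $j = n$ coincides with $\vartheta_1^{(n)}$. The subtlest step I anticipate is upgrading $\sharp\mathscr{C}(\rho) = n + 1$ to $\sharp\mathscr{L}(\rho) = n + 1$: finiteness of $n$ forces both $B$ and $C$ to be nonzero, so $V$ is $K[G]$-irreducible; any $A[G]$-isomorphism between two $T_j$'s extends to an element of $\mathrm{End}_{K[G]}(V)$ commuting with $\rho(g_0)$, which therefore preserves each eigenline $Kv_i$, and examining its action on the off-diagonal entries $b(g), c(g)$ shows it must be scalar, yielding homothety.
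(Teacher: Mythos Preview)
Your argument for (1) matches the paper's exactly.

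For (2) and (3) your route is correct but genuinely different from the paper's. The paper reduces to complete $A$ via Proposition~\ref{dvr1} and then invokes \cite[Proposition~3.4]{Y} (the author's earlier work over $p$-adic integer rings) as a black box, obtaining the count and the chain without reproducing the argument; the only additional step is the observation that if the chain produced by \cite{Y} happens to give $\vartheta_2^{(n)}$ on $T_n/T_0$, one replaces it by the reversed chain $T_0 \supsetneq \varpi T_1 \supsetneq \cdots \supsetneq \varpi^n T_n$.

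Your approach is self-contained: you split every $G$-stable lattice by the idempotents $\varepsilon_i \in A[\rho(g_0)]$ (using $\lambda_1 - \lambda_2 \in A^\times$), parameterize homothety classes by $\alpha - \beta \in [-c_0, b_0]$, and read off both the count and the explicit chain, together with the $G$-action on the quotients, directly. Your Schur-type step for $\sharp\mathscr{L}(\rho) = \sharp\mathscr{C}(\rho)$ (endomorphisms commuting with $\rho(g_0)$ are diagonal, and the off-diagonal relations force them to be scalar once $B, C \neq 0$) is a clean replacement for whatever is inside \cite{Y}. Two small remarks: the reduction to complete $A$ announced in your plan is never actually used in your argument, since the idempotent decomposition works over any DVR; and you should note that when $n = 0$ part~(3) is vacuous while your parameterization still gives $\sharp\mathscr{C}(\rho) = 1$, so (2) is covered. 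What your approach buys is independence from \cite{Y} and an explicit description of every stable lattice; what the paper's approach buys is brevity.
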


\begin{proof}

We prove the first assertion.  By Proposition \ref{21}, the condition $I\left(\rho \right) \subset \left(\varpi \right)$ is equivalent to that there exist characters $\psi_1$ and $\psi_2$ of $G$ with values in $\left(\left.A\right/\left(\varpi \right) \right)^{\times}$ such that $\mathrm{tr}\,\rho\,\mathrm{mod}\,\left(\varpi \right)=\psi_1+\psi_2$. Hence it is equivalent to $\left(T/\varpi T \right)^{\mathrm{ss}} \cong A/\left(\varpi \right)(\psi_1) \oplus A/\left(\varpi \right)(\psi_2)$ by Brauer-Nesbitt theorem. We have $\psi_1\left(g_0 \right) \neq \psi_2\left(g_0 \right)$ under the assumption. This completes the proof of the first assertion.

We prove the second assertion. First we assume that $\left(T/\varpi T \right)^{\mathrm{ss}}$ is irreducible. Then every $G$-stable lattice is homothetic with $T$ by Nakayama's lemma. Thus $\sharp\mathscr{L}\left(\rho \right)\leq\sharp\mathscr{C}\left(\rho \right)=1$, and hence $\sharp\mathscr{L}\left(\rho \right)=1$. On the other hand the assumption that $\left(T/\varpi T \right)^{\mathrm{ss}}$ is irreducible is equivalent to $I\left(\rho \right)=A$ by (1). Thus the second assertion follows when $\left(T/\varpi T \right)^{\mathrm{ss}}$ is irreducible. Next we assume that $\left(T/\varpi T \right)^{\mathrm{ss}}$ is reducible. Then $\mathrm{tr}\,\rho\,\mathrm{mod}\,\left(\varpi \right)$ is the sum of two distinct characters by (1). We may assume that $A$ is complete by Proposition \ref{dvr1}. Then $\mathrm{ord}_{\varpi}I\left(\rho \right)+1=\sharp\mathscr{L}\left(\rho \right)=\sharp\mathscr{C}\left(\rho \right)$ follows by the same proof \cite[Proposition 3.4]{Y}, where the same statement is proved for the ring of integers of a finite extension of $\mathbb{Q}_p$. This completes the proof of the second assertion.

We prove the third assertion. By the same proof of \cite[Proposition 3.4]{Y}, we have that $T_j/T_0$ is isomorphic to $A/\left(\varpi \right)^j$ as an $A$-module for any $1\leq j \leq n$. Under the assumption $\mathrm{tr}\,\rho\,\mathrm{mod}\,\left(\varpi \right)^n=\vartheta_1^{\left(n \right)}+\vartheta_2^{\left(n \right)}$, we have that $\left.T_n\right/T_0$ is isomorphic to either $A/\left(\varpi \right)^n\ (\vartheta_1^{\left(n \right)} )$ or $A/\left(\varpi \right)^n\ (\vartheta_2^{\left(n \right)} )$ by Proposition \ref{21}. If $T_n/T_0 \stackrel{\sim}{\rightarrow} A/\left(\varpi \right)^n\ (\vartheta_2^{\left(n \right)} )$, one may change the chain $T_n\supsetneq\cdots\supsetneq T_0$ to $$T_{0} \supsetneq \varpi T_{1}\supsetneq\cdots\supsetneq\varpi^{n}T_n.$$This completes the proof of Proposition \ref{22}.

\end{proof}

\begin{rem}
Note that for all representation we consider in this paper, we always have $\sharp\mathscr{C}\left(\rho\right)=\sharp\mathscr{L}\left(\rho\right)$. However, they are different in general. For example, for the semi-simple representation $\mathbf{1}\oplus\chi_{\mathrm{cyc}}: G_{\mathbb{Q}} \rightarrow \mathrm{Aut}_{\mathbb{Z}_p}\left(\mathbb{Z}_p^{\oplus 2} \right)$, the tree $\mathscr{C}\left(\rho \right)$ is a full-line (cf. \cite[\S 17, Proposition 2]{M11}) and $\sharp\mathscr{C}\left(\rho \right)=\infty$. However let $T$ and $T^{\prime}$ be the representatives of the points $x$ and $x^{\prime}$ in $\mathscr{C}\left(\rho \right)$ respectively. Since the representation is semi-simple, $T$ is isomorphic to $T^{\prime}$ as $A[G]$-modules. Hence $\sharp\mathscr{L}\left(\rho \right)=1$. Since we consider the variation of the algebraic $p$-adic $L$-function when a stable lattice varies, we must minimize the possibility of the change of Selmer group. Thus we consider the isomorphic classes of stable lattices instead of homothetic classes. 

\end{rem}

\section{Hida deformation and Selmer group}
\subsection{Hida deformation}
We recall some known results on Galois representations attached to normalized $\mathbb{I}$-adic eigen cusp forms in this section. For more details, the reader can refer to \cite[Chapter 7]{H3}. 

We denote by $\Gamma_t^{\prime}$ the $p$-Sylow subgroup of the group of diamond operators on the modular curve $Y_1\left(p^{t+1}\right)$. $\Gamma_t^{\prime}$ is canonically isomorphic to the multiplicative group $\left.1+p\mathbb{Z}\right/1+p^{t+1}\mathbb{Z}$. We define $\Gamma^{\prime}:=\plim[t]\Gamma_t^{\prime}$. Recall that $\kappa^{\prime}$ denotes the canonical isomorphism $\kappa^{\prime}: \Gamma^{\prime} \stackrel{\sim}{\rightarrow} 1+p\mathbb{Z}_p$. 

A character $\nu$ of $\Gamma^{\prime}$ is called an \textit{arithmetic character of weight $k_{\nu}\in \mathbb{Z}_{\geq 2}$} if there exists an open subgroup $U$ of $\Gamma^{\prime}$ such that $\left.\nu\right|_{U}={\kappa^{\prime}}^{k_{\nu}-2}.$ Let $\mathbb{I}$ be an integrally closed local domain which is finite flat over $\Lambda$. We denote by $\mathfrak{X}_{\mathrm{arith}}\left(\mathbb{I}\right)$ the set of \textit{arithmetic specializations} which is the set of continuous homomorphisms defined as follows:$$\mathfrak{X}_{\mathrm{arith}}\left(\mathbb{I}\right)=\Set{\phi : \mathbb{I} \rightarrow \overline{\mathbb{Q}}_p | \text{$\left.\phi\right|_{\Gamma^{\prime}}$ is an arithmetic character of weight $k_{\phi} \in \mathbb{Z}_{\geq 2}$} }.$$For an arithmetic specialization $\phi$, we denote by $p^{r_{\phi}}$ the order of the character $\left.\phi\right|_{\Gamma^{\prime}}\cdot{\kappa^{\prime}}^{2-k_{\phi}}$ and by $\psi_{\phi}$ the following Dirichlet character 
\begin{equation*}
\psi_{\phi}: \left(\left.\mathbb{Z}\right/p^{r_{\phi}+1}\mathbb{Z}\right)^{\times} \twoheadrightarrow \left.1+p\mathbb{Z}\right/1+p^{r_{\phi}+1}\mathbb{Z} \stackrel{\sim}{\rightarrow} \bigslant{\Gamma^{\prime}}{{\Gamma^{\prime}}^{p^{r_{\phi}}}} \stackrel{\left.\phi\right|_{\Gamma^{\prime}}\cdot{\kappa^{\prime}}^{2-k_{\phi}}}{\longrightarrow} \overline{\mathbb{Q}_p}^{\times}.
\end{equation*}

\begin{dfn}\label{Hi1}
Let $\chi$ be a Dirichlet character modulo $Np$ with $\left(N, p \right)=1$ and $\mathbb{I}$ an integrally closed local domain which is finite flat over $\Lambda_{\chi}$. We call $\mathcal{F}=\displaystyle \sum^{\infty}_{n=1}a(n, \mathcal{F})q^n \in \mathbb{I}[[q]]$ an $\mathbb{I}$-adic normalized eigen cusp form with character $\chi$ if $$f_{\phi}:=\displaystyle \sum^{\infty}_{n=1}\phi(a(n, \mathcal{F}))q^n \in S_{k_{\phi}}\left(\Gamma_0(Np^{r_{\phi}+1}), \chi\psi_{\phi}\omega^{1-k_{\phi}}, \phi\left(\mathbb{I}\right) \right)$$ is a $p$-ordinary normalized eigen cusp form for all $\phi \in \mathfrak{X}_{\mathrm{arith}}\left(\mathbb{I}\right)$.
\end{dfn}

In \cite[Theorem 2.1]{H2}, Hida constructed a continuous Galois representation $\rho_{\mathcal{F}}$ attached to an $\mathbb{I}$-adic normalized eigen cusp form $\mathcal{F}$ as follows:
\begin{thm}[Hida {\cite[Theorem 2.1]{H2}}]\label{Hida}
Let $\mathcal{F}$ be an $\mathbb{I}$-adic normalized eigen cusp form with character $\chi$. Then there exist a $\mathbb{K}$-vector space $\mathbb{V}_{\mathcal{F}}$ of dimension two and a Galois representation $$\rho_{\mathcal{F}} : G_{\mathbb{Q}} \rightarrow \mathrm{Aut}_{\mathbb{K}}\left(\mathbb{V}_{\mathcal{F}} \right)$$ such that 
\begin{enumerate}
\renewcommand{\labelenumi}{(\arabic{enumi})}
\item There exists a $G_{\mathbb{Q}}$-stable lattice $\mathbb{T}$ of $\mathbb{V}_{\mathcal{F}}$ such the representation $G_{\mathbb{Q}} \rightarrow \mathrm{Aut}_{\mathbb{I}}\left(\mathbb{T} \right)$ induced by $\rho_{\mathcal{F}}$ is continuous with respect to the $\mathfrak{m}_{\mathbb{I}}$-adic topology on $\mathrm{Aut}_{\mathbb{I}}\left(\mathbb{T} \right)$.
\item The representation $\rho_{\mathcal{F}}$ is irreducible and unramified outside $Np\infty$.
\item For the geometric Frobenius element $\mathrm{Frob}_l$ at $l \nmid Np$, we have $$\mathrm{tr}\,\rho_{\mathcal{F}}\left(\mathrm{Frob}_l \right)=a\left(l, \mathcal{F} \right),$$ $$\mathrm{det}\,\rho_{\mathcal{F}}\left(\mathrm{Frob}_l \right)=\chi\left(l \right)\langle l \rangle{\kappa^{\prime}}^{-1}\left(\langle l \rangle     \right).$$
\end{enumerate}
\end{thm}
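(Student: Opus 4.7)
The plan is to reconstruct $\rho_\mathcal{F}$ by interpolating the classical two-dimensional $p$-adic Galois representations $\rho_{f_\phi}$ attached by Eichler--Shimura--Deligne to the $p$-ordinary eigen cusp forms $f_\phi$ ($\phi \in \mathfrak{X}_{\mathrm{arith}}(\mathbb{I})$), and to produce the interpolation geometrically via the ordinary part of the étale cohomology of the tower of modular curves $\{X_1(Np^{r+1})\}_{r\geq 0}$. The density of arithmetic points in $\mathrm{Spec}(\mathbb{I})$ will then allow the pointwise identities at arithmetic $\phi$ to be upgraded to identities of $\mathbb{I}$-valued functions of $\mathrm{Frob}_l$.

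First I would build the natural candidate lattice. Form the $p$-adic étale cohomology tower
\begin{equation*}
H := \varprojlim_{r} H^{1}_{\mathrm{\acute{e}t}}\bigl(X_1(Np^{r+1})\otimes_\mathbb{Q}\overline{\mathbb{Q}},\,\mathbb{Z}_p\bigr)^{\mathrm{ord}},
\end{equation*}
which is a finitely generated module over Hida's ordinary Hecke algebra $\mathbf{h}^{\mathrm{ord}}(N,\mathbb{Z}_p)$, equipped with a continuous action of $G_\mathbb{Q}$ unramified outside $Np\infty$ by base-change invariance of étale cohomology. The $\mathbb{I}$-adic form $\mathcal{F}$ furnishes a $\Lambda_\chi$-algebra homomorphism $\lambda_\mathcal{F} : \mathbf{h}^{\mathrm{ord}} \to \mathbb{I}$, and I would set $\mathbb{T} := H \otimes_{\mathbf{h}^{\mathrm{ord}},\,\lambda_\mathcal{F}} \mathbb{I}$ (after localization at the appropriate maximal ideal), giving a finitely generated $\mathbb{I}$-module with continuous $G_\mathbb{Q}$-action with respect to the $\mathfrak{m}_\mathbb{I}$-adic topology. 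Define $\mathbb{V}_\mathcal{F} := \mathbb{T}\otimes_\mathbb{I} \mathbb{K}$; this establishes condition (1).

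Next I would verify (3) by specialization. For each $\phi \in \mathfrak{X}_{\mathrm{arith}}(\mathbb{I})$ of weight $k_\phi \geq 2$, the control theorem for the étale cohomology tower (Hida) identifies $\mathbb{T}\otimes_{\mathbb{I},\phi}\overline{\mathbb{Q}}_p$ with the $\phi$-isotypic component of classical weight-$k_\phi$ ordinary cohomology, which in turn realizes the Deligne Galois representation $\rho_{f_\phi}$. The Eichler--Shimura congruence then yields, for $l \nmid Np$,
\begin{equation*}
\phi\bigl(\mathrm{tr}\,\rho_\mathcal{F}(\mathrm{Frob}_l)\bigr) = a(l, f_\phi) = \phi\bigl(a(l,\mathcal{F})\bigr), \qquad \phi\bigl(\mathrm{det}\,\rho_\mathcal{F}(\mathrm{Frob}_l)\bigr) = \chi\psi_\phi\omega^{1-k_\phi}(l)\cdot l^{k_\phi - 1}.
\end{equation*}
Rewriting $l^{k_\phi-1} = \omega^{k_\phi-1}(l)\langle l \rangle^{k_\phi-1}$ and tracing through the definition of $\psi_\phi$ shows that the right-hand sides are the images under $\phi$ of the $\mathbb{I}$-valued expressions $a(l,\mathcal{F})$ and $\chi(l)\langle l\rangle {\kappa'}^{-1}(\langle l\rangle)$. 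Since $\mathfrak{X}_{\mathrm{arith}}(\mathbb{I})$ is Zariski-dense in $\mathrm{Spec}(\mathbb{I})$ and both sides are continuous $\mathbb{I}$-valued functions of $\mathrm{Frob}_l$, equality holds in $\mathbb{I}$, proving (3).

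The main obstacle is the irreducibility assertion (2). Over $\mathbb{K}$ one cannot directly invoke the classical result because the classical irreducibility of $\rho_{f_\phi}$ at individual $\phi$'s does not automatically globalize: reducibility of $\rho_\mathcal{F}$ over $\mathbb{K}$ is only incompatible with irreducibility at $\phi$ if the reducibility locus meets $\phi$, and reducibility could \emph{a priori} occur after passing to a finite extension. I would resolve this by the standard trace argument: if $\rho_\mathcal{F}$ were reducible over $\mathbb{K}$, then $\mathrm{tr}\,\rho_\mathcal{F}$ would decompose as $\eta_1+\eta_2$ for two $\mathbb{K}$-valued characters of $G_\mathbb{Q}$ unramified outside $Np\infty$, and comparison with the Dirichlet-series factorization would force $\mathcal{F}$ to be an $\mathbb{I}$-adic Eisenstein series at the generic fiber, contradicting cuspidality of $f_\phi$ for some (hence Zariski-densely many) arithmetic $\phi$. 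Alternatively, and more cleanly, one checks that at some classical $\phi$ of weight $\geq 2$ the specialization $\rho_{f_\phi}$ is irreducible, which precludes $\mathrm{tr}\,\rho_\mathcal{F}$ from being a sum of two global characters, yielding irreducibility of $\rho_\mathcal{F}$ over $\mathbb{K}$. The unramifiedness outside $Np\infty$ is inherited from the construction via étale cohomology of $X_1(Np^{r+1})$, which has good reduction away from $Np$.
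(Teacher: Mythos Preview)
The paper does not supply its own proof of this theorem: it is stated as a result of Hida with a citation to \cite[Theorem 2.1]{H2}, and is used as background input for the rest of the paper. Your proposal is a reasonable sketch of Hida's original construction---building the representation from the ordinary part of the $p$-adic \'etale cohomology of the tower $\{X_1(Np^{r+1})\}$, then pinning down the trace and determinant by specialization at arithmetic points and Zariski density, and deducing irreducibility from the classical irreducibility of $\rho_{f_\phi}$.

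Two points you would want to tighten if this were to stand as a full proof. First, defining $\mathbb{V}_\mathcal{F}=\mathbb{T}\otimes_\mathbb{I}\mathbb{K}$ does not by itself give $\dim_\mathbb{K}\mathbb{V}_\mathcal{F}=2$; you need Hida's structure theorem that $H$ is finite free of rank two over $\mathbf{h}^{\mathrm{ord}}$ (or at least over the relevant local component), which is nontrivial and is really the heart of \cite{H1,H2}. Second, for the irreducibility step your ``alternative'' argument is the clean one: if $\rho_\mathcal{F}$ were reducible over $\mathbb{K}$, the same would hold after specializing at any arithmetic $\phi$ (since a $\mathbb{K}$-stable line specializes to a $\phi(\mathbb{I})$-stable line in $V_{f_\phi}$), contradicting the irreducibility of $\rho_{f_\phi}$ for a cusp form of weight $\geq 2$. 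Your first argument via Eisenstein series is less direct and would require more care to make rigorous.
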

Although $\rho_{\mathcal{F}}$ may not have a $G_{\mathbb{Q}}$-stable $\mathbb{I}$-free lattice, we have the following proposition for the existence of the residual representation at a prime ideal of $\mathbb{I}$ (see \cite[\S 9]{MW2} for example).
\begin{prp}[Hida, Mazur-Wiles]\label{residual rep}
Let $\mathbb{V}_{\mathcal{F}}$ be the Galois representation attached to an $\mathbb{I}$-adic normalized eigen cusp form $\mathcal{F}$. Then for a prime ideal $\mathfrak{p}$ of $\mathbb{I}$, there exists a residual representation $$\rho_{\mathcal{F}}\left(\mathfrak{p} \right) : G_{\mathbb{Q}} \rightarrow \mathrm{GL}_2\left(\mathrm{Frac}\left(\mathbb{I}/\mathfrak{p} \right) \right)$$of $\rho_{\mathcal{F}}$ at $\mathfrak{p}$ such that $\rho_{\mathcal{F}}\left(\mathfrak{p} \right)$ is semi-simple, continuous under the $\mathfrak{m}_{\mathbb{I}}$-adic topology of $\mathrm{GL}_2\left(\mathrm{Frac}\left(\mathbb{I}/\mathfrak{p} \right) \right)$ and satisfies the following properties: 
\begin{enumerate}
\renewcommand{\labelenumi}{(\arabic{enumi})}
\item The representation $\rho_{\mathcal{F}}\left(\mathfrak{p} \right)$ is unramified outside $Np\infty$.
\item For the arithmetic Frobenius element $\mathrm{Frob}_l$ at $l \nmid Np$, we have
$$\mathrm{tr}\,\rho_{\mathcal{F}}(\mathfrak{p})(\mathrm{Frob}_l)=a\left(l, \mathcal{F} \right)\,\mathrm{mod}\,\mathfrak{p},$$
$$\mathrm{det}\,\rho_{\mathcal{F}}(\mathfrak{p})(\mathrm{Frob}_l)=\chi(l)\langle l \rangle{\kappa^{\prime}}^{-1}\left(\langle l \rangle     \right)\,\mathrm{mod}\,\mathfrak{p}.$$
\end{enumerate}
Furthermore, the residual representation $\rho_{\mathcal{F}}\left(\mathfrak{p} \right)$ is unique up to isomorphism over an algebraic closure of $\mathrm{Frac}(\mathbb{I}/\mathfrak{p})$. 
\end{prp}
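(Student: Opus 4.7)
The plan is to construct $\rho_{\mathcal{F}}(\mathfrak{p})$ by the method of pseudo-characters. First I will observe that the trace and determinant of $\rho_{\mathcal{F}}$ take values in $\mathbb{I}$: Theorem \ref{Hida} gives $\mathrm{tr}\,\rho_{\mathcal{F}}(\mathrm{Frob}_l) = a(l,\mathcal{F}) \in \mathbb{I}$ and $\mathrm{det}\,\rho_{\mathcal{F}}(\mathrm{Frob}_l) = \chi(l)\langle l \rangle {\kappa^{\prime}}^{-1}(\langle l \rangle) \in \mathbb{I}$ for every $l \nmid Np$, and by Chebotarev density combined with the $\mathfrak{m}_{\mathbb{I}}$-adic continuity of the action of $G_{\mathbb{Q}}$ on the stable lattice $\mathbb{T}$, the functions $T := \mathrm{tr}\,\rho_{\mathcal{F}}$ and $D := \mathrm{det}\,\rho_{\mathcal{F}}$ extend to continuous $\mathbb{I}$-valued functions on all of $G_{\mathbb{Q}}$.

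Next I will reduce modulo $\mathfrak{p}$. Composing $T$ and $D$ with $\mathbb{I} \twoheadrightarrow \mathbb{I}/\mathfrak{p} \hookrightarrow F := \mathrm{Frac}(\mathbb{I}/\mathfrak{p})$ yields a continuous two-dimensional pseudo-character on $G_{\mathbb{Q}}$ with values in $F$, which I then base-change to an algebraic closure $\overline{F}$. The construction of $\rho_{\mathcal{F}}(\mathfrak{p})$ follows from the classical theorem of Taylor (see also Rouquier and Nyssen): every continuous two-dimensional pseudo-character on a topological group with values in an algebraically closed field is the trace of a continuous semi-simple representation, unique up to isomorphism. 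The trace and determinant formulas in (2) are then built in by construction, and the property (1) of being unramified outside $Np\infty$ follows because $T$ and $D$ already factor through $\mathrm{Gal}(\mathbb{Q}_{Np\infty}/\mathbb{Q})$, and a semi-simple representation is determined by its trace via Brauer--Nesbitt. Continuity of $\rho_{\mathcal{F}}(\mathfrak{p})$ transfers from the continuity of the pair $(T_{\mathfrak{p}}, D_{\mathfrak{p}})$ by the same rigidity principle.

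The uniqueness over $\overline{\mathrm{Frac}(\mathbb{I}/\mathfrak{p})}$ is again Brauer--Nesbitt combined with Chebotarev density applied to the Frobenius classes outside $Np$. The principal technical obstacle is that when the resulting residual representation is reducible --- most importantly in the case $\mathfrak{p}=\mathfrak{m}_{\mathbb{I}}$, the residually reducible situation central to this paper --- one cannot recover $\rho_{\mathcal{F}}(\mathfrak{p})$ by naive reduction of a lattice, since the stable lattice $\mathbb{T}$ need not be $\mathbb{I}$-free. Taylor's construction bypasses this by realizing $\rho_{\mathcal{F}}(\mathfrak{p})$ directly as the semi-simple direct sum $\vartheta_1 \oplus \vartheta_2$ whenever the pseudo-character splits as $\vartheta_1+\vartheta_2$ over $\overline{F}$; this splitting is automatic in dimension two by elementary linear algebra once an element of $G_{\mathbb{Q}}$ with distinct eigenvalues modulo the relevant prime is chosen (for instance a complex conjugation, using oddness of the representation together with $p$ being odd).
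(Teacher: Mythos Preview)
The paper does not supply its own proof of this proposition; it is stated as a known result attributed to Hida and Mazur--Wiles, with a pointer to \cite[\S 9]{MW2}. So there is no in-paper argument to compare against in detail.

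Your pseudo-character approach is correct and is essentially the modern standard way to establish this result. The reduction of $\mathrm{tr}\,\rho_{\mathcal{F}}$ and $\mathrm{det}\,\rho_{\mathcal{F}}$ modulo $\mathfrak{p}$, the appeal to Taylor/Rouquier/Nyssen to produce a semi-simple representation, and Brauer--Nesbitt plus Chebotarev for uniqueness are all sound. One point worth making explicit: the proposition asserts that $\rho_{\mathcal{F}}(\mathfrak{p})$ lands in $\mathrm{GL}_2(\mathrm{Frac}(\mathbb{I}/\mathfrak{p}))$ itself, not merely in $\mathrm{GL}_2$ of an algebraic closure, whereas Taylor's theorem as you invoke it a priori gives a representation over $\overline{F}$. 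You do supply the missing ingredient in your last paragraph --- the existence of complex conjugation, whose eigenvalues $\pm 1$ are distinct in $\mathbb{I}/\mathfrak{p}$ since $p$ is odd, lets one diagonalize and pin down the matrix entries over $F$ exactly as in the proof of Proposition~\ref{21} of the present paper --- but it would be cleaner to flag this descent step explicitly rather than fold it into the discussion of the reducible case.

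For historical comparison: the original argument in \cite[\S 9]{MW2} predates the pseudo-character formalism and is more hands-on, working directly with matrix coefficients after choosing a basis adapted to an element with distinct eigenvalues. Your argument and theirs share the same core ideas (Chebotarev density, a distinguished element, Brauer--Nesbitt), but the pseudo-character language packages them more cleanly and generalizes better.
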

As an representation of the decomposition subgroup $D_p$, we have the following property of $\rho_{\mathcal{F}}$ due to Mazur and Wiles: 
\begin{thm}[Wiles {\cite[Theorem 2.2.2]{Wi88}}]\label{W88}
Let $\mathbb{V}_{\mathcal{F}}$ be the Galois representation attached to an $\mathbb{I}$-adic normalized eigen cusp form $\mathcal{F}$. Then $\mathbb{V}_{\mathcal{F}}$ has a unique $D_p$-stable subspace $F^{+}\mathbb{V}_{\mathcal{F}}$ of dimension $1$ such that the action of $D_p$ on $\mathbb{V}_{\mathcal{F}}/F^{+}\mathbb{V}_{\mathcal{F}}$ is unramified.
\end{thm}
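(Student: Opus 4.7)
The plan is to construct $F^{+}\mathbb{V}_{\mathcal{F}}$ by interpolating the classical Mazur--Wiles filtrations from the arithmetic specializations, using the ordinarity condition $a(p,\mathcal{F})\in\mathbb{I}^{\times}$ together with the Zariski density of $\mathfrak{X}_{\mathrm{arith}}(\mathbb{I})$ in $\mathrm{Spec}(\mathbb{I})$ (that is, the fact that the intersection of the kernels of all arithmetic specializations is zero in $\mathbb{I}$).

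First, I would fix a $G_{\mathbb{Q}}$-stable lattice $\mathbb{T}$ provided by Theorem \ref{Hida} and analyze $\rho_{\mathcal{F}}(\mathrm{Frob}_p)$. Its characteristic polynomial $P(X)\in\mathbb{I}[X]$ has $a(p,\mathcal{F})$ as one root (obtained by interpolating the classical relation on the unramified quotient); the other root has positive $p$-adic valuation in $\mathbb{I}$ because at every arithmetic $\phi$ of weight $k_\phi\geq 2$ the value $\phi(\det\rho_{\mathcal{F}}(\mathrm{Frob}_p))$ has positive valuation while $\phi(a(p,\mathcal{F}))$ is a unit. Hensel's lemma then factors $P(X)=(X-\alpha)(X-\beta)$ in $\mathbb{I}[X]$ with $\alpha\in\mathbb{I}^{\times}$ and $\beta\in\mathfrak{m}_{\mathbb{I}}$, yielding an eigenspace decomposition $\mathbb{V}_{\mathcal{F}}=L_{\alpha}\oplus L_{\beta}$ under $\mathrm{Frob}_p$, with eigenvectors $v_{\alpha},v_{\beta}$ chosen adapted to the lattice $\mathbb{T}$ so that all matrix entries we consider below lie in $\mathbb{I}$.

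Second, I would upgrade $L_{\beta}$ to a full $D_p$-stable line with unramified quotient. For $\sigma\in I_p$, write $\rho_{\mathcal{F}}(\sigma)$ in the basis $(v_{\alpha},v_{\beta})$ as a $2\times 2$ matrix with entries in $\mathbb{I}$. At every arithmetic specialization $\phi$, the classical Mazur--Wiles theorem for the $p$-ordinary form $f_\phi$ produces a unique $D_p$-stable line in $V_{f_\phi}$ with unramified quotient on which Frobenius acts as $\phi(\alpha)$; this line is necessarily $\phi(L_{\beta})$ (the non-unit Frobenius eigenline), and unramifiedness of the quotient gives $\phi(\sigma\cdot v_{\alpha})\equiv\phi(v_{\alpha})\pmod{\phi(L_{\beta})}$ together with $\phi(\sigma\cdot v_{\beta})\in\phi(L_{\beta})$. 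Consequently, the two relevant matrix coefficients (the $v_{\alpha}$-coefficient of $\sigma v_{\beta}$, and the $v_{\alpha}$-coefficient of $\sigma v_{\alpha}-v_{\alpha}$) vanish modulo every arithmetic specialization, hence vanish in $\mathbb{I}$. One concludes that $F^{+}\mathbb{V}_{\mathcal{F}}:=L_{\beta}$ is $D_p$-stable and that $I_p$ acts trivially on the quotient.

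For uniqueness, any $D_p$-stable line $W$ with unramified quotient must in particular be a $\mathrm{Frob}_p$-eigenline, hence $W\in\{L_{\alpha},L_{\beta}\}$; since the unit Frobenius eigenvalue $\alpha$ must appear in the unramified quotient, $W=L_{\beta}=F^{+}\mathbb{V}_{\mathcal{F}}$. The main obstacle I anticipate is ensuring that the interpolation step genuinely yields equations in $\mathbb{I}$ (rather than merely in $\mathbb{K}$), so that Zariski density of $\mathfrak{X}_{\mathrm{arith}}(\mathbb{I})$ forces identical vanishing; this requires choosing the Frobenius eigenbasis inside the lattice $\mathbb{T}$ (using that $\mathbb{I}$ is integrally closed and that Hensel's factorization of $P(X)$ takes place inside $\mathbb{I}[X]$), and tracking that the matrix entries of the continuous representation $\rho_{\mathcal{F}}$ remain in $\mathbb{I}$ throughout the argument.
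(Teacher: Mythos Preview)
The paper does not give its own proof of this statement; it is quoted from Wiles \cite[Theorem~2.2.2]{Wi88} and used as input. Wiles's original argument is geometric, extracting the filtration from the connected--\'etale sequence of the ordinary part of the $p$-divisible group of the tower of modular Jacobians, so there is no in-paper argument to compare against.

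That said, your proposed argument has a genuine gap. The element you write as $\rho_{\mathcal F}(\mathrm{Frob}_p)$ is not well-defined: $\rho_{\mathcal F}$ is ramified at $p$, so one must choose a lift $\varphi\in D_p$ of Frobenius, and the characteristic polynomial of $\rho_{\mathcal F}(\varphi)$ depends on that choice. More seriously, your key claim that ``the other root has positive $p$-adic valuation'' is false. At an arithmetic specialization of weight $k$, the local representation is upper-triangular with diagonal characters $\psi_1,\psi_2$, where $\psi_2$ is unramified with $\psi_2(\mathrm{Frob}_p)=\alpha_p$ the unit root; then $\psi_1=(\det\rho_{f_\phi})\cdot\psi_2^{-1}$, and since $\det\rho_{f_\phi}$ is a product of a power of the cyclotomic character with a finite-order character, it takes values in $\mathbb Z_p^{\times}$. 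Hence for \emph{any} lift $\varphi$ the second eigenvalue $\psi_1(\varphi)$ is also a $p$-adic unit. You are conflating the Hecke polynomial at $p$, namely $X^2-a_pX+p^{k-1}\varepsilon(p)$ (whose non-unit root $\beta_p$ is a crystalline Frobenius eigenvalue), with the characteristic polynomial of $\rho_{f_\phi}(\varphi)$ on the Galois side; these agree at primes $\ell\nmid Np$ but not at $p$. Thus your Hensel step and the identification of $L_\beta$ as ``the non-unit eigenline'' both collapse, and with them the construction of $F^{+}\mathbb V_{\mathcal F}$. A correct interpolation approach has to import the classical filtration $F^{+}V_{f_\phi}$ by some intrinsic characterization that does not reduce to a unit/non-unit dichotomy of Galois Frobenius eigenvalues; in practice this is exactly the geometric input (via $U_p$ acting on cohomology, or the connected--\'etale sequence) that Wiles uses.
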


\subsection{Selmer groups for Galois deformations}

In this section, we recall the comparison formula of Selmer groups for two-variable Hida deformation which is proved by Ochiai \cite{Ochiai08}. First we recall the definition of Selmer group for a general Galois deformation. Let $\mathcal{R}$ be an integrally closed local domain which is finite flat over $\mathbb{Z}_p[[X_1, \cdots, X_n]]$ with $\mathcal{M}$ the maximal ideal of $\mathcal{R}$ and $\mathcal{K}$ the field of fractions. Let $\mathcal{V}$ be a finite-dimensional $\mathcal{K}$-vector space and $$\rho: G_{\mathbb{Q}} \rightarrow \mathrm{Aut}_{\mathcal{K}}\left(\mathcal{V} \right)$$ a linear representation such that 
\begin{enumerate}
\renewcommand{\labelenumi}{(\roman{enumi})}
\item The representation $\rho$ has a $G_{\mathbb{Q}}$-stable lattice $\mathcal{T}$.
\item The action of $G_{\mathbb{Q}}$ on $\mathcal{T}$ is continuous with respect to the $\mathcal{M}$-adic topology on $\mathrm{Aut}_{\mathcal{R}}\left(\mathcal{T} \right)$.
\item The the action of $G_{\mathbb{Q}}$ on $\mathcal{T}$ is unramified outside a finite set of primes $\Sigma \supset \set{p, \infty}$. 
\end{enumerate}

\begin{dfn}[{\cite[Definition 1.2]{Ochiai08}}]\label{7131}
Let $\mathcal{A}=\mathcal{T}\otimes_{\mathcal{R}}\mathcal{R}^{\lor}$
. Suppose that we have a $D_p$-stable subspace $F^{+}\mathcal{V}$ of $\mathcal{V}$. Let $F^{+}\mathcal{T}=F^{+}\mathcal{V} \cap \mathcal{T}$ and let $F^{+}\mathcal{A}=F^{+}\mathcal{T}\otimes_{\mathcal{R}}\mathcal{R}^{\lor}$ in $\mathcal{A}$. We define the Selmer group $\mathrm{Sel}_{\mathcal{A}}$ as follows:
$$\mathrm{Sel}_{\mathcal{A}}=\mathrm{Ker}\left[H^{1}\left(\mathbb{Q}_{\Sigma}/\mathbb{Q}, \mathcal{A} \right) \rightarrow \displaystyle\prod_{l \in \Sigma\setminus\{p, \infty\}}H^{1}\left(I_l, \mathcal{A} \right)\times H^{1}\left(I_p, \left.\mathcal{A}\right/F^{+}\mathcal{A} \right) \right].$$
\end{dfn}

The first well-known example for the above $\rho$ is the cyclotomic deformation of an ordinary $p$-adic representation. For simplicity, we only state the case of ordinary $p$-adic representation $V_f$ coming from a $p$-ordinary eigen cusp form $f$. Let $T$ be a $G_{\mathbb{Q}}$-stable $\mathcal{O}$-lattice of $V_f$, where $\mathcal{O}$ is the ring of integers of the field $\mathbb{Q}_p\left(\left\{a\left(n, f \right) \right\}_{n \geq 1} \right)$. Let $\widetilde{T}^{\left(i \right)}:=T\otimes_{\mathbb{Z}_p}\mathbb{Z}_p[[\Gamma]]\left(\tilde{\kappa}^{-1} \right)\otimes\omega^j$ where $G_{\mathbb{Q}}$ acts on diagonally. Let $\Lambda_{\mathcal{O}}^{\mathrm{cyc}}=\mathcal{O}[[\Gamma]]$ and $\widetilde{A}=\widetilde{T}^{\left(i \right)}\otimes_{\Lambda_{\mathcal{O}}^{\mathrm{cyc}}}{\Lambda_{\mathcal{O}}^{\mathrm{cyc}}}^{\lor}$. Since $f$ is $p$-ordinary, we have the $D_p$-stable subspace $F^{+}V_f \subset V_f$ of dimension one such that the action of $D_p$ on $\left.V_f\right/{F^{+}V_f}$ is unramified by Theorem \ref{W88}. Let $F^{+}T=T \cap F^{+}V_f$. We have $D_p$-stable submodules $F^{+}\widetilde{T}^{\left(i \right)}:=F^{+}T\otimes_{\mathbb{Z}_p}\mathbb{Z}_p[[\Gamma]]\left(\tilde{\kappa}^{-1} \right)\otimes\omega^i$ and $F^{+}\widetilde{A}:=F^{+}\widetilde{T}^{\left(i \right)}\otimes_{\Lambda_{\mathcal{O}}^{\mathrm{cyc}}}{\Lambda_{\mathcal{O}}^{\mathrm{cyc}}}^{\lor}$ of $\widetilde{T}^{\left(i \right)}$ and $\widetilde{A}$ respectively. Then one can define $\mathrm{Sel}_{\widetilde{A}}$ as Definition \ref{7131}. 
\begin{prp}[Greenberg {\cite[Proposition 3.2]{RG}}]\label{RGde}
We have that $\mathrm{Sel}_{\widetilde{A}}$ is isomorphic to $\mathrm{Sel}_{A}\left(\mathbb{Q}_{\infty} \right)$, where $A=T\otimes_{\mathbb{Z}_p}\otimes\left.\mathbb{Q}_p\right/\mathbb{Z}_p\otimes\omega^i$ and $\mathrm{Sel}_{A}\left(\mathbb{Q}_{\infty} \right) \subset H^{1}\left(\left.\mathbb{Q}_{\Sigma}\right/\mathbb{Q}_{\infty}, A \right)$ is the cyclotomic Selmer group defined in \cite{RG89}.
\end{prp}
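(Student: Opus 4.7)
The plan is to reduce the comparison to an application of Shapiro's lemma, following Greenberg's original argument in \cite{RG}. The first step is to observe that the $\Lambda_{\mathcal{O}}^{\mathrm{cyc}}$-module $\widetilde{A}$, by virtue of the twist by $\tilde{\kappa}^{-1}$, is canonically isomorphic to the discrete induced module
\begin{equation*}
\widetilde{A} \;\cong\; \mathrm{Ind}_{G_{\mathbb{Q}_\infty}}^{G_{\mathbb{Q}}}\!\bigl(A\bigr)\;=\;\mathrm{Hom}_{\mathbb{Z}_p[G_{\mathbb{Q}_\infty}]}\!\bigl(\mathbb{Z}_p[G_{\mathbb{Q}}],A\bigr),
\end{equation*}
where $A = T\otimes_{\mathbb{Z}_p}(\mathbb{Q}_p/\mathbb{Z}_p)\otimes\omega^i$. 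Indeed, $\mathbb{Z}_p[[\Gamma]](\tilde{\kappa}^{-1})\otimes_{\Lambda_{\mathcal{O}}^{\mathrm{cyc}}}(\Lambda_{\mathcal{O}}^{\mathrm{cyc}})^{\vee}$ is, as a $\mathbb{Z}_p[G_{\mathbb{Q}}]$-module, the regular representation of $\Gamma$ on $\mathbb{Z}_p[\Gamma]^{\vee}\cong\mathrm{Maps}(\Gamma,\mathbb{Q}_p/\mathbb{Z}_p)$, so tensoring with $T\otimes\omega^i$ produces the coinduced module of $A$ from $G_{\mathbb{Q}_\infty}$ to $G_{\mathbb{Q}}$.

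Next I would apply Shapiro's lemma in continuous cohomology to identify
\begin{equation*}
H^1(\mathbb{Q}_{\Sigma}/\mathbb{Q},\widetilde{A})\;\stackrel{\sim}{\longrightarrow}\;H^1(\mathbb{Q}_{\Sigma}/\mathbb{Q}_\infty,A),
\end{equation*}
and analogously for the local cohomology groups that appear in the Selmer condition. For a prime $l\in\Sigma\setminus\{p,\infty\}$, writing $\mathbb{Q}_{\infty,l}=\mathbb{Q}_\infty\otimes_{\mathbb{Q}}\mathbb{Q}_l=\prod_{w\mid l}\mathbb{Q}_{\infty,w}$, Shapiro identifies $H^1(I_l,\widetilde{A})\cong\prod_{w\mid l}H^1(I_w,A)$, where $I_w$ denotes inertia at $w$. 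For the prime $p$, the key input is that the $D_p$-stable subspace $F^{+}\mathbb{V}_{\mathcal{F}}$ of Theorem~\ref{W88} is compatible with the twist, so $F^{+}\widetilde{A}$ corresponds under Shapiro to the induced module of $F^{+}A$; hence $H^1(I_p,\widetilde{A}/F^{+}\widetilde{A})\cong\prod_{v\mid p}H^1(I_v,A/F^{+}A)$.

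Putting these identifications together yields the commutative diagram whose top row defines $\mathrm{Sel}_{\widetilde{A}}$ and whose bottom row defines $\mathrm{Sel}_{A}(\mathbb{Q}_\infty)$ as in \cite{RG89}; taking kernels gives the required isomorphism. The one technical point that requires care, and which I would treat as the main obstacle, is the verification that Shapiro's lemma really respects the local conditions at $\ell\neq p$: one must check that the finitely many primes of $\mathbb{Q}_\infty$ over $\ell$ all have procyclic (or trivial) inertia in $\Gamma$ and that $A$ has no extra invariants that would enlarge $H^1(I_w,A)$ into the unramified part — precisely the place where the conventions of \cite{RG89} on the local conditions at primes of bad reduction become relevant. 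Once this is settled, the proposition follows by diagram chase.
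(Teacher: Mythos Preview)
The paper does not supply its own proof of this proposition; it is stated as a citation of Greenberg \cite[Proposition 3.2]{RG} and used as a black box. Your outline via Shapiro's lemma---identifying $\widetilde{A}$ with the coinduced module $\mathrm{Ind}_{G_{\mathbb{Q}_\infty}}^{G_{\mathbb{Q}}}(A)$, then matching the global $H^1$ and the local conditions at each place---is precisely Greenberg's original argument, so your approach is correct and coincides with the source the paper defers to.
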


By combining Proposition \ref{RGde} with Kato's theorem, we have that the Pontryagin dual $\mathrm{Sel}_{\widetilde{A}}^{\lor}$ is a finitely generated torsion $\Lambda_{\mathcal{O}}^{\mathrm{cyc}}$-module and 
\begin{equation*}
\mathrm{char}_{\Lambda_{\mathcal{O}}^{\mathrm{cyc}}}\left(\mathrm{Sel}_{\widetilde{A}}\right)^{\lor}=\mathrm{char}_{\Lambda_{\mathcal{O}}^{\mathrm{cyc}}}\mathrm{Sel}_{A}\left(\mathbb{Q}_{\infty} \right)^{\lor}.
\end{equation*}

Now let us consider the case where $\rho$ comes form a Hida deformation. Let us keep the notation of Hida deformation in the previous section. Recall that $\mathbb{V}_{\mathcal{F}}$ is the Galois representation attached to an $\mathbb{I}$-adic normalized eigen cusp form $\mathcal{F}$ and $\mathbb{T}$ a $G_{\mathbb{Q}}$-stable lattice of $\mathbb{V}_{\mathcal{F}}$. Let $\mathcal{T}^{\left(i \right)}=\mathbb{T}\hat{\otimes}_{\mathbb{Z}_p} \mathbb{Z}_p[[\Gamma]]\left(\tilde{\kappa}^{-1} \right)\otimes\omega^i$. Let $F^{+}\mathbb{T}=\mathbb{T}\cap F^{+}\mathbb{V}_{\mathcal{F}}$ and 
$F^{+}\mathcal{T}^{\left(i \right)}=F^{+}\mathbb{T}\hat{\otimes}_{\mathbb{Z}_p} \mathbb{Z}_p[[\Gamma]]\left(\tilde{\kappa}^{-1} \right)\otimes\omega^i$. Let $\mathbb{A}=\mathbb{T}^{\left(i \right)}\otimes_{\mathbb{I}}\mathbb{I}^{\lor}$ (resp. $\mathcal{A}=\mathcal{T}^{\left(i \right)}\otimes_{\mathbb{I}[[\Gamma]]}\mathbb{I}[[\Gamma]]^{\lor}$). Then one can define the Selmer group $\mathrm{Sel}_{\mathbb{A}}$ (resp. $\mathrm{Sel}_{\mathcal{A}}$) for one-variable (resp. two-variable) Hida deformation as Definition \ref{7131}. We have the following ``torsionness'' property for $\mathrm{Sel}_{\mathbb{A}}$ and $\mathrm{Sel}_{\mathcal{A}}$:
\begin{thm}[Ochiai {\cite[Corollary D]{Ochiai01}, \cite[Proposition 4.9]{Ochiai06}, \cite[Remark 1.7 3-(b)]{Ochiai08}}]\label{torsion}
Assume that the ring $\mathbb{I}$ is Gorenstein, then $\left(\mathrm{Sel}_{\mathbb{A}} \right)^{\lor}$ and $\left(\mathrm{Sel}_{\mathcal{A}} \right)^{\lor}$ are finitely generated torsion modules over $\mathbb{I}$ and $\mathbb{I}[[\Gamma]]$ respectively.
\end{thm}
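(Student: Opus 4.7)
The plan is to reduce the two-variable statement to Kato's theorem on the cyclotomic deformation of $p$-ordinary modular forms by means of a control theorem in the Hida variable. First I would observe that a finitely generated $\mathbb{I}[[\Gamma]]$-module $M$ is torsion as soon as its localization at every height-one prime in some Zariski-dense family vanishes in rank. The arithmetic specializations $\phi \in \mathfrak{X}_{\mathrm{arith}}\left(\mathbb{I} \right)$ provide such a dense family of height-one primes $P_{\phi} \subset \mathbb{I}$, and hence of $P_{\phi} \mathbb{I}[[\Gamma]] \subset \mathbb{I}[[\Gamma]]$ by flatness of $\mathbb{I}[[\Gamma]]$ over $\mathbb{I}$; so it suffices to understand the specializations of $\mathrm{Sel}_{\mathcal{A}}$ at each $P_{\phi}$.

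Next I would establish a control theorem: for each arithmetic $\phi$ with kernel $P_{\phi}$, the natural specialization map
$$\mathrm{Sel}_{\mathcal{A}}[P_{\phi}] \longrightarrow \mathrm{Sel}_{\widetilde{A}_{f_{\phi}}}$$
(with $\widetilde{A}_{f_{\phi}}$ the cyclotomic discrete module attached to the $p$-ordinary eigenform $f_{\phi}$) has kernel and cokernel of finite corank, with the error controlled by local cohomology of $F^{+}\mathcal{A}$ and of the unramified quotient. The Gorenstein assumption on $\mathbb{I}$ enters crucially at this step: it guarantees that $\mathrm{Hom}_{\mathbb{Z}_p}\left(\mathbb{I}/P_{\phi}, \mathbb{Q}_p/\mathbb{Z}_p \right) \cong \mathbb{I}/P_{\phi}$ up to a twist by the dualizing module of $\mathbb{I}$ over $\Lambda$, so that specializing $\mathcal{A}$ along $P_{\phi}$ matches $\widetilde{A}_{f_{\phi}}$ without the unwieldy error terms arising from a failure of Pontryagin self-duality for $\mathbb{I}$.

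Third, applying Proposition \ref{RGde} together with Kato's Euler system theorem for cyclotomic deformations of $p$-ordinary modular forms, $\left(\mathrm{Sel}_{\widetilde{A}_{f_{\phi}}} \right)^{\lor}$ is a torsion $\Lambda_{\phi\left(\mathbb{I}\right)}^{\mathrm{cyc}}$-module for every arithmetic $\phi$. Combined with the control theorem, this forces the localization of $\left(\mathrm{Sel}_{\mathcal{A}} \right)^{\lor}$ at $P_{\phi}\mathbb{I}[[\Gamma]]$ to be torsion over $\mathbb{I}[[\Gamma]]/P_{\phi}\mathbb{I}[[\Gamma]] \cong \phi\left(\mathbb{I}\right)[[\Gamma]]$, and the density observation of the first step then shows that $\left(\mathrm{Sel}_{\mathcal{A}} \right)^{\lor}$ itself is torsion over $\mathbb{I}[[\Gamma]]$. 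Torsionness of $\left(\mathrm{Sel}_{\mathbb{A}} \right)^{\lor}$ over $\mathbb{I}$ follows by the same strategy restricted to the Hida variable, or alternatively by a second control argument along the augmentation ideal of $\mathbb{Z}_p[[\Gamma]]$ applied to the two-variable statement.

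The principal obstacle will be the control theorem in the Hida direction. The Selmer group carries a Greenberg-type local condition built from the one-dimensional $D_{p}$-stable subspace $F^{+}\mathbb{V}_{\mathcal{F}}$ of Theorem \ref{W88}, and one must verify that passing to $P_{\phi}$-torsion commutes with both global and local Galois cohomology up to cofinitely generated (ideally pseudo-null) error. The Gorenstein hypothesis is precisely the axiom that bounds these errors and lets the comparison go through; without it the dualizing module of $\mathbb{I}$ intervenes and breaks the clean transfer of torsionness from $\left(\mathrm{Sel}_{\widetilde{A}_{f_{\phi}}} \right)^{\lor}$ back up to $\left(\mathrm{Sel}_{\mathcal{A}} \right)^{\lor}$.
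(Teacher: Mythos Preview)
The paper does not prove this statement at all: Theorem \ref{torsion} is quoted as a result of Ochiai, with explicit citations to \cite[Corollary D]{Ochiai01}, \cite[Proposition 4.9]{Ochiai06}, and \cite[Remark 1.7 3-(b)]{Ochiai08}, and no argument is supplied in the present paper. So there is nothing in the paper to compare your proposal against directly.

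That said, your sketch is a fair summary of the strategy in the cited references: one proves a control theorem in the Hida variable comparing $\mathrm{Sel}_{\mathcal{A}}[P_{\phi}]$ with the cyclotomic Selmer group of the specialization $f_{\phi}$, invokes Kato's theorem (via Proposition \ref{RGde}) to see that the latter has torsion dual, and then propagates torsionness upward using the density of arithmetic primes. The Gorenstein hypothesis is indeed what makes the Pontryagin-dual comparison $\mathcal{A}[P_{\phi}] \cong \widetilde{A}_{f_{\phi}}$ clean, and also what underlies the freeness/local-condition arguments (compare Lemma \ref{FO} in this paper). One small caution: your last sentence about deducing the one-variable case over $\mathbb{I}$ ``by the same strategy restricted to the Hida variable'' is a bit glib, since there is no cyclotomic variable left to invoke Kato over $\mathbb{Q}_{\infty}$; in \cite{Ochiai01} the one-variable torsionness is obtained by a separate control argument down to finiteness of Bloch--Kato Selmer groups at arithmetic weights, not by restricting the two-variable argument. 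But as a high-level outline your proposal matches the literature.
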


Since we are interested in the residually reducible case, it is important to calculate the difference of Selmer groups for different choice of $G_{\mathbb{Q}}$-stable lattice. In \cite{Ochiai08}, by generalizing the method of Perrin-Riou \cite{PR}, Ochiai gived a formula on calculation of the difference of Selmer groups for Galois deformation. Although Ochiai's theorem could apply to plenty Galois deformations, we only state the case of the two-variable Hida deformation which will be used in the remainder of this paper.

\begin{thm}[Ochiai {\cite[Theorem 1.6 and Corollary 4.4]{Ochiai08}}]\label{O}
Let $\mathbb{V}_{\mathcal{F}}$ be the Galois representation attached to an $\mathbb{I}$-adic normalized eigen cusp form $\mathcal{F}$. Suppose that $\mathbb{I}$ is isomorphic to $\mathcal{O}[[X]]$, where $\mathcal{O}$ is the ring of integers of a finite extension of $\mathbb{Q}_p$. Let $\mathbb{T}$ and $\mathbb{T}^{\prime}$ be $G_{\mathbb{Q}}$-stable $\mathbb{I}$-free lattices of $\rho_{\mathcal{F}}$. Let $\mathcal{T}^{\left(i \right)} =\mathbb{T}\hat{\otimes}_{\mathbb{Z}_p} \mathbb{Z}_p[[\Gamma]]\left(\tilde{\kappa}^{-1}\right)\otimes_{\mathbb{Z}_p}\omega^i$ (resp. ${\mathcal{T}^{\prime}}^{\left(i \right)}=\mathbb{T}^{\prime}\hat{\otimes}_{\mathbb{Z}_p} \mathbb{Z}_p[[\Gamma]]\left(\tilde{\kappa}^{-1}\right)\otimes_{\mathbb{Z}_p}\omega^i$) and $\mathcal{A}=\mathcal{T}^{\left(i \right)}\otimes_{\mathbb{I}[[\Gamma]]}\mathbb{I}[[\Gamma]]^{\lor}$ (resp. $\mathcal{A}^{\prime}={\mathcal{T}^{\prime}}^{\left(i \right)}\otimes_{\mathbb{I}[[\Gamma]]}\mathbb{I}[[\Gamma]]^{\lor}$). Then we have the following equality:
\begin{equation*}
\dfrac{\mathrm{char}_{\mathbb{I}[[\Gamma]]}\left(\mathrm{Sel}_{\mathcal{A}}\right)^{\lor}}{\mathrm{char}_{\mathbb{I}[[\Gamma]]}\left(\mathrm{Sel}_{\mathcal{A}^{\prime}}\right)^{\lor}}=\displaystyle\prod_{\mathfrak{P}\in P^1\left(\mathbb{I}[[\Gamma]] \right)}\mathfrak{P}^{\mathrm{length}_{\mathbb{I}[[\Gamma]]_{\mathfrak{P}}}\left(\left(\left.\mathcal{T}^{\left(i \right)}\right/{\mathcal{T}^{\prime}}^{\left(i \right)} \right)_{G_{\mathbb{R}}} \right)_{\mathfrak{P}}-\mathrm{length}_{\mathbb{I}[[\Gamma]]_{\mathfrak{P}}}\left(\left(\left.F^{+}\mathcal{T}^{\left(i \right)}\right/F^{+}{\mathcal{T}^{\prime}}^{\left(i \right)} \right) \right)_{\mathfrak{P}}}.
\end{equation*}
\end{thm}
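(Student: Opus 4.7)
The plan is to follow the strategy developed by Perrin-Riou \cite{PR} and adapted to general Galois deformations by Ochiai \cite{Ochiai08}. The starting observation is that, by Theorem \ref{torsion}, both $(\mathrm{Sel}_{\mathcal{A}})^{\lor}$ and $(\mathrm{Sel}_{\mathcal{A}'})^{\lor}$ are finitely generated torsion modules over $\mathcal{R}:=\mathbb{I}[[\Gamma]]$, which under the hypothesis $\mathbb{I}\cong\mathcal{O}[[X]]$ is a regular local ring of Krull dimension three. Consequently every such torsion module $N$ satisfies
\begin{equation*}
\mathrm{char}_{\mathcal{R}}(N)=\prod_{\mathfrak{P}\in P^{1}(\mathcal{R})}\mathfrak{P}^{\mathrm{length}_{\mathcal{R}_{\mathfrak{P}}}(N_{\mathfrak{P}})},
\end{equation*}
so the asserted formula is equivalent to an equality of exponents at every height-one prime $\mathfrak{P}$, a statement that can be checked after localizing at the DVR $\mathcal{R}_{\mathfrak{P}}$.

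Having fixed $\mathfrak{P}$, I would form a snake-lemma comparison of the diagrams defining $\mathrm{Sel}_{\mathcal{A}'}$ and $\mathrm{Sel}_{\mathcal{A}}$ (Definition \ref{7131}). The crucial input is the $G_{\mathbb{Q}}$-equivariant short exact sequence
\begin{equation*}
0\longrightarrow{\mathcal{T}'}^{(i)}\longrightarrow\mathcal{T}^{(i)}\longrightarrow Q\longrightarrow 0,\qquad Q:=\mathcal{T}^{(i)}/{\mathcal{T}'}^{(i)},
\end{equation*}
which, thanks to the $\mathcal{R}$-freeness of $\mathcal{T}^{(i)}$ and ${\mathcal{T}'}^{(i)}$, induces analogous exact sequences on the associated divisible $G_{\mathbb{Q}}$-modules $\mathcal{A},\mathcal{A}'$ and on their $F^{+}$-parts. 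Running the snake lemma then expresses the difference between $(\mathrm{Sel}_{\mathcal{A}})_{\mathfrak{P}}$ and $(\mathrm{Sel}_{\mathcal{A}'})_{\mathfrak{P}}$ as an alternating combination of global and local Galois cohomology groups of $Q$, $F^{+}Q$, and $Q/F^{+}Q$.

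The last step is the explicit length computation of these error terms. By the global Euler-Poincar\'{e} formula of Tate, the global Euler characteristic equals the sum of local ones, so everything boils down to computing, for each $v\in\Sigma$, the length at $\mathfrak{P}$ of the relevant local cohomology of $Q$. At primes $v\in\Sigma\setminus\{p,\infty\}$, where inertia acts trivially on $Q$, the alternating sum of local cohomology lengths vanishes. At $v=p$, the Greenberg local condition $H^{1}(I_{p},\mathcal{A}/F^{+}\mathcal{A})$ contributes precisely $-\mathrm{length}_{\mathcal{R}_{\mathfrak{P}}}(F^{+}\mathcal{T}^{(i)}/F^{+}{\mathcal{T}'}^{(i)})_{\mathfrak{P}}$. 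At $v=\infty$, the surviving term yields exactly $\mathrm{length}_{\mathcal{R}_{\mathfrak{P}}}(Q_{G_{\mathbb{R}}})_{\mathfrak{P}}$, the coinvariants under complex conjugation. Reassembling the contributions at each $\mathfrak{P}$ reproduces the stated formula.

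The main obstacle, in my view, is the archimedean bookkeeping: one must identify the surviving piece of $H^{*}(G_{\mathbb{R}},Q)$ with $Q_{G_{\mathbb{R}}}$ on the nose, a parity-sensitive calculation that becomes delicate in the two-variable setting (as opposed to the purely cyclotomic one treated by Perrin-Riou). A secondary issue is verifying that the contributions at primes $l\in\Sigma\setminus\{p,\infty\}$ really cancel; this rests on the unramifiedness of $\rho_{\mathcal{F}}$ outside $\{p,\infty\}$ together with the standard Iwasawa-theoretic vanishing of local cohomology of modules on which inertia acts trivially.
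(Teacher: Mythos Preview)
The paper does not supply its own proof of this theorem: it is quoted verbatim as a result of Ochiai \cite[Theorem 1.6 and Corollary 4.4]{Ochiai08} and used as a black box. So there is no argument in the paper against which to compare your sketch; what you have written is essentially an outline of the Perrin--Riou/Ochiai method that the cited reference carries out in full.

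That said, one point in your sketch is not quite right. You write that the contributions at primes $l\in\Sigma\setminus\{p,\infty\}$ vanish because ``inertia acts trivially on $Q$'', appealing to unramifiedness of $\rho_{\mathcal{F}}$ outside $\{p,\infty\}$. In the generality of Theorem \ref{O} the tame level $N$ need not be $1$, and $\rho_{\mathcal{F}}$ is only unramified outside $Np\infty$ (Theorem \ref{Hida}(2)); at primes $l\mid N$ the inertia action on $Q$ can be nontrivial. The cancellation at such $l$ does hold, but it comes from the fact that the same unramified local condition is imposed for both $\mathcal{A}$ and $\mathcal{A}'$, so the alternating local error terms match up, not from triviality of the inertia action. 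If you intend to write out the argument, this is the place where your justification needs to be corrected.
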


\section{Proof of the main theorem}

\subsection{Statement of the main result}

Let us keep the notation of the previous section. In this subsection, we state Theorem A in more general settings. We fix a pair $\left(N, \chi \right)$ where $N$ is a positive integer with $\left(N, p \right)=1$ and $\chi$ a Dirichlet character modulo $Np$ form now on to the end of this paper. Recall that Iwasawa \cite{Iw68} showed that there exists a unique power series $\mathcal{L}_p\left(\chi; \gamma^{\prime} \right) \in \Lambda_{\chi}$ such that 
\begin{eqnarray*}
\phi\left(\mathcal{L}_p\left(\chi; \gamma^{\prime} \right) \right) =\left\{ \begin{array}{ll}
L_p\left(1-k_{\phi}, \chi\psi_{\phi}\omega \right) & (\chi \neq \omega^{-1}) \\
\left(\psi_{\phi}\left(u \right)u^{k_{\phi}}-1 \right)L_p\left(1-k_{\phi}, \psi_{\phi} \right) & (\chi=\omega^{-1})\\
\end{array} \right.
\end{eqnarray*}
for any arithmetic specialization $\phi \in \mathfrak{X}_{\mathrm{arith}}\left(\Lambda_{\theta}\right)$. Note that we modify $\mathcal{L}_p\left(\chi; \gamma^{\prime} \right)$ slightly different from Iwasawa's original paper \cite{Iw68} for the compatibility of the arithmetic specialization.

Let $\mathbb{V}_{\mathcal{F}}$ be the Galois representation attached to an $\mathbb{I}$-adic normalized eigen cusp form $\mathcal{F}$ with character $\chi$ which is fixed in the rest of the paper. We always assume the following condition:
\begin{list}{}{}
\item[($D_p$-dist)]The residual representation $\rho_{\mathcal{F}}\left(\mathfrak{m}_{\mathbb{I}} \right)$ is $D_p$-distinguished i.e. $\left.\rho_{\mathcal{F}}\left(\mathfrak{m}_{\mathbb{I}} \right)\right|_{D_p}$ is decomposed into two distinct characters of $D_p$ with values in $\left(\mathbb{I}/\mathfrak{m}_{\mathbb{I}} \right)^{\times}$. 
\end{list}
Since we are interested in the case when $\rho_{\mathcal{F}}\left(\mathfrak{m}_{\mathbb{I}} \right)$ is reducible, we assume the following condition from now on to the end of this paper
\begin{list}{}{}
\item[(Red)]The residual representation $\rho_{\mathcal{F}}\left(\mathfrak{m}_{\mathbb{I}} \right)$ is isomorphic to $\mathbf{1}\oplus\overline{\chi}$, where $\overline{\chi}$ is the character 
$\overline{\chi}: G_{\mathbb{Q}} \stackrel{\chi}{\rightarrow} \mathbb{I}^{\times} \twoheadrightarrow \left(\left.\mathbb{I}\right/\mathfrak{m}_{\mathbb{I}} \right)^{\times}.$
\end{list}

Let $\varepsilon_1\ (\mathrm{resp}.\ \varepsilon_2) : D_p \rightarrow \mathbb{I}^{\times}$ be the character such that $D_p$ acts on $F^{+}\mathbb{V}_{\mathcal{F}}$ (resp. $\mathbb{V}_{\mathcal{F}}/F^{+}\mathbb{V}_{\mathcal{F}}$) via $\varepsilon_1$ (resp. $\varepsilon_2$). Since $\varepsilon_2$ is unramified, under the conditions ($D_p$-dist) and (Red) we have $\overline{\varepsilon_1}=\left.\overline{\chi}\right|_{D_p}$ and $\overline{\varepsilon_2}=\mathbf{1}$. Let $\varphi$ the Euler totient function. Write $\mathcal{R}=\mathbb{I}[[\Gamma]]$ from now on to the end of this paper. Recall that $\mathscr{L}^{\mathrm{fr}}\left(\rho_{\mathcal{F}} \right)$ is the set of isomorphic classes of $G_{\mathbb{Q}}$-stable $\mathbb{I}$-free lattices of $\mathbb{V}_{\mathcal{F}}$. Recall that $i$ is a fixed integer throughout the paper. For a $G_{\mathbb{Q}}$-stable lattice $\mathbb{T}$ of $\mathbb{V}_{\mathcal{F}}$, let $\mathcal{T}^{\left(i \right)}=\mathbb{T}\hat{\otimes}_{\mathbb{Z}_p}\mathbb{Z}_p[[\Gamma]]\left(\tilde{\kappa}^{-1} \right)\otimes\omega^i$ and $\mathcal{A}=\mathcal{T}^{\left(i \right)}\otimes_{\mathcal{R}}\mathcal{R}^{\lor}$. Let $L_p^{\mathrm{alg}}\left(  \mathcal{T}^{\left(i \right)} \right)$ be a generator of $\mathrm{char}_{\mathcal{R}}\left(\mathrm{Sel}_{\mathcal{A}}\right)^{\lor}$, then $L_p^{\mathrm{alg}}\left(  \mathcal{T}^{\left(i \right)} \right)$ is well-defined up to multiplications by elements of $\mathcal{R}^{\times}$. Let $\mathscr{L}_p^{\mathrm{alg}}\left(\rho_{\mathcal{F}}^{\mathrm{n.ord}, \left(i \right)} \right)$ be the set of all $\left(L_p^{\mathrm{alg}}\left(  \mathcal{T}^{\left(i \right)} \right) \right)$ when $\mathbb{T}$ varies. Our main result in this paper is the following theorem:

\begin{thm}\label{yandongmain}
Suppose $\mathbb{I}$ is isomorphic to $\mathcal{O}[[X]]$. Assume $p \nmid \varphi(N)$ and the conditions (Red), ($D_p$-dist). Let $\mathbb{J}$ (resp. $\mathcal{J}$) be the ideal of $\mathbb{I}$ (resp. $\mathcal{R}$) which is generated by $a\left(l, \mathcal{F} \right)-1-\chi\left(l \right)\langle l \rangle{\kappa^{\prime}}^{-1}\left(\langle l \rangle \right)$ for all primes $l \nmid Np$ and $a\left(p, \mathcal{F} \right)-1$.  Then we have the following statements:
\begin{enumerate}
\item[(1)] The set $\mathscr{L}^{\mathrm{fr}}\left(\rho_{\mathcal{F}} \right)$ is finite and we have 
\begin{equation*}\label{12101}
\sharp\mathscr{L}^{\mathrm{fr}}\left(\rho_{\mathcal{F}} \right)=\sharp\mathscr{C}^{\mathrm{fr}}\left(\rho_{\mathcal{F}} \right)=\displaystyle\prod_{\mathfrak{p} \in P^1\left(\mathbb{I} \right)}\left(\mathrm{ord}_{\mathfrak{p}}\mathbb{J}_{\mathfrak{p}} +1 \right)\leq\displaystyle\prod_{\mathfrak{p} \in P^1\left(\mathbb{I} \right)}\left(\mathrm{ord}_{\mathfrak{p}}\mathcal{L}_p\left(\chi; \gamma^{\prime} \right)+1\right).
\end{equation*}
\item[(2)]There exist a set $\mathscr{T}$ of a system of representatives of $\mathscr{C}^{\mathrm{fr}}\left(\rho_{\mathcal{F}}\right)$ and a lattice $\mathbb{T}^{\mathrm{min}} \in \mathscr{T}$ such that we have the following bijection
\begin{equation*}
\mathscr{T} \rightarrow D\left(\mathbb{J}^{**} \right), \mathbb{T} \mapsto \mathfrak{a}
\end{equation*}
such that $\left.\mathbb{T}\right/\mathbb{T}^{\mathrm{min}} \stackrel{\sim}{\rightarrow} \left.\mathbb{I}\right/\mathfrak{a}\,\left(\mathbf{1} \right).$
\item[(3)]Let $\mathbb{T}^{\mathrm{min}}$ be the $G_{\mathbb{Q}}$-stable $\mathbb{I}$-free lattice in the second assertion. We have the following equalities:
\begin{equation*}
\mathscr{L}_p^{\mathrm{alg}}\left(\rho_{\mathcal{F}}^{\mathrm{n.ord}, \left(i \right)} \right)=
\begin{cases}
\left\{ \left(L_p^{\mathrm{alg}}\left(\mathcal{T}^{\mathrm{min}, \left(i \right)} \right)\right) \right\}& \left(i: \text{odd} \right) \\
L_p^{\mathrm{alg}}\left(\mathcal{T}^{\mathrm{min}, \left(i \right)} \right) D\left(\mathcal{J}^{**} \right)& \left(i: \text{even} \right).
\end{cases}
\end{equation*}
\end{enumerate}
\end{thm}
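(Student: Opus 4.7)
The plan is to reduce the global question to local calculations at every height-one prime of $\mathbb{I}$ via Proposition \ref{22}, and then to feed the resulting lattice combinatorics into Ochiai's formula (Theorem \ref{Ochiai thm}) to read off the variation of the algebraic $p$-adic $L$-function. Since $\mathbb{I} \cong \mathcal{O}[[X]]$ is a regular local ring of Krull dimension two, any $\mathbb{I}$-free lattice $\mathbb{T}$ of $\mathbb{V}_{\mathcal{F}}$ is reflexive and therefore determined by its localizations $\mathbb{T}_{\mathfrak{p}}$ at the height-one primes $\mathfrak{p} \in P^1(\mathbb{I})$ via $\mathbb{T} = \bigcap_{\mathfrak{p}} \mathbb{T}_{\mathfrak{p}}$. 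This observation is what lets us pass between global and local data in both directions, and in particular shows that only finitely many $\mathfrak{p}$ can contribute a non-trivial count to $\sharp\mathscr{L}^{\mathrm{fr}}(\rho_{\mathcal{F}})$.

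At each height-one prime $\mathfrak{p}$ the localization $\mathbb{I}_{\mathfrak{p}}$ is a discrete valuation ring, so Proposition \ref{22} applies once we exhibit a suitable distinguishing element. Using $p \nmid \varphi(N)$ together with (Red), Chebotarev density produces a prime $l \nmid Np$ with $\chi(l)\langle l \rangle {\kappa^{\prime}}^{-1}(\langle l \rangle) \not\equiv 1 \pmod{\mathfrak{m}_{\mathbb{I}}}$, and $g_0 := \mathrm{Frob}_l$ then has residually distinct eigenvalues. I next identify the ideal of reducibility $I(\rho_{\mathcal{F}} \otimes \mathbb{I}_{\mathfrak{p}})$ with $\mathbb{J}_{\mathfrak{p}}$: by (Red) the residual characters of $\rho_{\mathcal{F}}$ are $\mathbf{1}$ and $\overline{\chi}$, and the requirement that the lifts $\vartheta_1,\vartheta_2$ appearing in Proposition \ref{21} be unramified outside $Np\infty$ forces $\vartheta_1 = \mathbf{1}$ and $\vartheta_2(\mathrm{Frob}_l) = \chi(l)\langle l\rangle{\kappa^{\prime}}^{-1}(\langle l\rangle)$; Chebotarev density then turns the trace relation $\mathrm{tr}\,\rho_{\mathcal{F}} \equiv \vartheta_1 + \vartheta_2$ into exactly the generators of $\mathbb{J}$, with the $p$-generator $a(p,\mathcal{F}) - 1$ coming from the ordinary filtration at $\mathrm{Frob}_p$. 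Proposition \ref{22}(2) then gives $\sharp\mathscr{L}(\rho_{\mathcal{F}} \otimes \mathbb{I}_{\mathfrak{p}}) = \mathrm{ord}_{\mathfrak{p}}\mathbb{J} + 1$, and the bound $\mathbb{J} \mid (\mathcal{L}_p(\chi;\gamma^{\prime}))$ comes from the Eisenstein congruences of $\mathcal{F}$ combined with the Mazur--Wiles/Ohta identification of the Eisenstein ideal with $(\mathcal{L}_p(\chi;\gamma^{\prime}))$ under ($\Lambda$).

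For (2), I construct $\mathbb{T}^{\mathrm{min}}$ by gluing. Proposition \ref{22}(3) gives at each $\mathfrak{p}$ a chain with a distinguished bottom lattice whose successive quotients carry the character $\vartheta_1^{(n)} = \mathbf{1}$; setting $\mathbb{T}^{\mathrm{min}}_{\mathfrak{p}}$ to be this bottom and $\mathbb{T}^{\mathrm{min}} := \bigcap_{\mathfrak{p}} \mathbb{T}^{\mathrm{min}}_{\mathfrak{p}}$ yields an $\mathbb{I}$-reflexive, hence free, $G_{\mathbb{Q}}$-stable lattice of rank two. For any other free lattice $\mathbb{T} \supset \mathbb{T}^{\mathrm{min}}$, reading off the chain position prime-by-prime produces a divisor $\mathfrak{a}$ of $\mathbb{J}^{**} = \bigcap_{\mathfrak{p}} \mathfrak{p}^{\mathrm{ord}_{\mathfrak{p}}(\mathbb{J})}$, and the $G_{\mathbb{Q}}$-isomorphism $\mathbb{T}/\mathbb{T}^{\mathrm{min}} \cong \mathbb{I}/\mathfrak{a}(\mathbf{1})$ is forced by the local statement of Proposition \ref{22}(3). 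Part (3) then follows from Theorem \ref{Ochiai thm} once two inputs are verified. First, $F^{+}\mathbb{T}/F^{+}\mathbb{T}^{\mathrm{min}} = 0$: on $F^{+}\mathbb{V}_{\mathcal{F}}$ the decomposition group acts via $\varepsilon_1$ with $\overline{\varepsilon_1} = \overline{\chi}|_{D_p} \neq \mathbf{1}$ by ($D_p$-dist), so no trivial $D_p$-subquotient of $\mathbb{T}/\mathbb{T}^{\mathrm{min}}$ can come from $F^{+}$. Second, complex conjugation $c$ acts on $\mathcal{T}^{(i)}/\mathcal{T}^{\mathrm{min},(i)}$ by $\omega^i(c)\tilde{\kappa}^{-1}(c) = (-1)^i$, using that $\mathbb{Q}_{\infty}$ is totally real; since $2$ is a unit in each $\mathcal{R}_{\mathfrak{P}}$, the $G_{\mathbb{R}}$-coinvariants are the full quotient for $i$ even and vanish for $i$ odd. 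Feeding these into Theorem \ref{Ochiai thm} gives the parity-dependent description of $\mathscr{L}_p^{\mathrm{alg}}(\rho_{\mathcal{F}}^{\mathrm{n.ord},(i)})$ claimed in the theorem.

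The step I expect to be the main obstacle is the gluing in (2): the conclusions of Proposition \ref{22}(3) are local statements about \emph{isomorphism} classes, and one must rigidify the characters $\vartheta_1^{(n)},\vartheta_2^{(n)}$ coherently across all height-one primes so that the $\mathbb{T}^{\mathrm{min}}_{\mathfrak{p}}$ can be chosen consistently and the successive quotients of the global chain carry the trivial character rather than its twist by $\overline{\chi}$. The key tool should be the uniqueness clause in Proposition \ref{21} applied globally to $\rho_{\mathcal{F}}$ modulo $\mathbb{J}^{**}$, which pins down the pair $\{\vartheta_1,\vartheta_2\}$ once and for all and makes the local-to-global passage unambiguous.
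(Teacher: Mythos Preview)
Your overall strategy matches the paper's: localize at height-one primes, apply Proposition~\ref{22}, glue back via reflexivity, and feed the result into Theorem~\ref{O}. The identification $I(\rho_{\mathcal{F},\mathfrak{p}})=\mathbb{J}_{\mathfrak{p}}$ and the use of the complex-conjugation computation in part~(3) are both correct and are exactly what the paper does (Lemma~\ref{3.7}, Lemma~\ref{redu}, and the end of the proof of Theorem~\ref{yandongmain}).

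There is, however, a genuine gap in your argument for part~(2), and it is not the one you flag. You claim that the global isomorphism $\mathbb{T}/\mathbb{T}^{\mathrm{min}}\cong\mathbb{I}/\mathfrak{a}\,(\mathbf{1})$ is ``forced by the local statement of Proposition~\ref{22}(3)''. This is false: knowing that each localization $(\mathbb{T}/\mathbb{T}^{\mathrm{min}})_{\mathfrak{p}_i}$ is cyclic does \emph{not} force the global module to be cyclic. Over the local ring $\mathbb{I}$, for distinct height-one primes $\mathfrak{p},\mathfrak{q}$ one has $\mathfrak{p}+\mathfrak{q}\subset\mathfrak{m}_{\mathbb{I}}\subsetneq\mathbb{I}$, so $\mathbb{I}/\mathfrak{p}\oplus\mathbb{I}/\mathfrak{q}$ is a non-cyclic module whose height-one localizations are all cyclic (or zero) and which has no pseudo-null submodule. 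The character rigidification you worry about is in fact handled cleanly by the uniqueness clause of Proposition~\ref{21}; what actually requires work is cyclicity.

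The paper repairs this by reversing the order of your two ``inputs''. It first proves $F^{+}\mathbb{T}(j_1,\ldots,j_r)=F^{+}\mathbb{T}(0,\ldots,0)$ (Lemma~\ref{11081}) using only the \emph{local} type-$\mathbf{1}$ information from Proposition~\ref{22}(3) together with ($D_p$-dist): one localizes the snake-lemma sequence at some $\mathfrak{p}_i$ with $j_i\neq 0$ and derives a contradiction from an $\varepsilon_1$-type submodule sitting inside a type-$\mathbf{1}$ module. Only then does global cyclicity follow (Lemma~\ref{Fil2}): once $F^{+}$ is unchanged, the snake lemma gives $\mathbb{T}_{\mathbf{j}}/\mathbb{T}_{\mathbf{0}}\xrightarrow{\sim}F^{-}\mathbb{T}_{\mathbf{j}}/F^{-}\mathbb{T}_{\mathbf{0}}$, and the right-hand side is a quotient of rank-one free $\mathbb{I}$-modules by Lemma~\ref{FO}, hence cyclic of the form $\mathbb{I}/(\xi)$. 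The type-$\mathbf{1}$ Galois structure is then read off from the $\varepsilon_2$-action on $F^{-}$. You invoke $F^{+}\mathbb{T}=F^{+}\mathbb{T}^{\mathrm{min}}$ only afterwards, as an input to Ochiai's formula, and your justification for it already presupposes the global type-$\mathbf{1}$ structure you are trying to establish; reordering as above, and explicitly using Lemma~\ref{FO}, closes the gap.

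One minor point: the inequality in part~(1) does not need condition~($\Lambda$); the paper obtains $(\mathcal{L}_p(\chi;\gamma'))\subset\mathbb{J}$ directly from \cite[Proposition~3.8]{Y}.
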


\subsection{Preparation for the proof of Theorem \ref{yandongmain}}
Before we prove Theorem \ref{yandongmain}, we obtain some results on Hida deformation as preparation. By the following lemma, we know that it is enough to study the variation of $L_p^{\mathrm{alg}}\left(\mathcal{T}^{\left(i \right)} \right)$ when $\mathbb{T}$ varies in the set of $\mathbb{I}$-free lattices. 
\begin{lem}[{\cite[Lemma 4.3]{Ochiai08}}]\label{43}
Let $\mathbb{T}$ be a $G_{\mathbb{Q}}$-stable lattice of $\mathbb{V}_{\mathcal{F}}$. Let $\mathcal{T}^{\left(i \right)}=\mathbb{T}\hat\otimes_{\mathbb{Z}_p} \mathbb{Z}_p[[\Gamma]]\left(\tilde{\kappa}^{-1} \right)\otimes\omega^i$ and ${\mathcal{T}^{**}}^{\left(i \right)}=\mathbb{T}^{**}\hat\otimes_{\mathbb{Z}_p} \mathbb{Z}_p[[\Gamma]]\left(\tilde{\kappa}^{-1} \right)\otimes\omega^i$. Assume that $\mathbb{I}$ is a regular local ring. Then we have the following equality 
\begin{equation*}
\left(L_p^{\mathrm{alg}}\left(\mathcal{T}^{\left(i \right)} \right) \right)=\left(L_p^{\mathrm{alg}}\left({\mathcal{T}^{**}}^{\left(i \right)} \right)    \right)
\end{equation*}
and ${\mathcal{T}^{**}}^{\left(i \right)}$ is free over $\mathcal{R}$.
\end{lem}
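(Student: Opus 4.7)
The plan is to deduce both assertions from the pseudo-nullity of $\mathbb{T}^{**}/\mathbb{T}$ as an $\mathbb{I}$-module. First I would establish the freeness claim. Since $\mathbb{I}$ is regular local of Krull dimension two (e.g., $\mathbb{I}\cong\mathcal{O}[[X]]$) and $\mathbb{T}^{**}$ is reflexive by construction, Serre's $(S_{2})$ criterion yields $\mathrm{depth}_{\mathbb{I}}(\mathbb{T}^{**})\geq 2=\mathrm{depth}(\mathbb{I})$. The Auslander--Buchsbaum formula then gives $\mathrm{pd}_{\mathbb{I}}(\mathbb{T}^{**})=0$, so $\mathbb{T}^{**}$ is free over $\mathbb{I}$. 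Flatness of $\mathcal{R}=\mathbb{I}[[\Gamma]]$ over $\mathbb{I}$ then implies that the twisted base change $\mathcal{T}^{**(i)}$ is free over $\mathcal{R}$, which settles the second assertion.

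Next I would show that $Q:=\mathbb{T}^{**}/\mathbb{T}$ is pseudo-null. At every height-one prime $\mathfrak{p}\in P^{1}(\mathbb{I})$, the localization $\mathbb{I}_{\mathfrak{p}}$ is a discrete valuation ring, so the torsion-free module $\mathbb{T}_{\mathfrak{p}}$ is automatically free (hence reflexive), giving $\mathbb{T}_{\mathfrak{p}}=(\mathbb{T}^{**})_{\mathfrak{p}}$. Thus $Q$ is supported only at $\mathfrak{m}_{\mathbb{I}}$ and is of finite length, so $\mathfrak{m}_{\mathbb{I}}^{n}Q=0$ for some $n\geq 1$. Since $\mathcal{R}$ is $\mathbb{I}$-flat, tensoring and twisting yields an exact sequence of $\mathcal{R}[G_{\mathbb{Q}}]$-modules
$$0\longrightarrow\mathcal{T}^{(i)}\longrightarrow\mathcal{T}^{**(i)}\longrightarrow\mathcal{Q}\longrightarrow 0$$
with $\mathfrak{m}_{\mathbb{I}}^{n}\mathcal{R}\cdot\mathcal{Q}=0$. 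Because $\mathfrak{m}_{\mathbb{I}}\mathcal{R}$ cuts out a codimension-two closed subscheme of the three-dimensional regular ring $\mathcal{R}$, the module $\mathcal{Q}$ is pseudo-null over $\mathcal{R}$.

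Finally I would pass to Selmer groups. Applying $-\otimes_{\mathcal{R}}\mathcal{R}^{\vee}$ to the exact sequence, and noting that the annihilator of $\mathcal{Q}$ also kills every $\mathrm{Tor}_{i}^{\mathcal{R}}(\mathcal{Q},\mathcal{R}^{\vee})$ by functoriality of multiplication-by-$a$, the kernel and cokernel of the induced map $\mathcal{A}\to\mathcal{A}^{**}$ are pseudo-null over $\mathcal{R}$. Taking the long exact sequences of Galois cohomology defining $\mathrm{Sel}_{\mathcal{A}}$ and $\mathrm{Sel}_{\mathcal{A}^{**}}$, and invoking the snake lemma for the inclusion of the filtrations to see that $F^{+}\mathbb{T}^{**}/F^{+}\mathbb{T}$ injects into $Q$ (hence is also pseudo-null), one obtains a morphism $\mathrm{Sel}_{\mathcal{A}}\to\mathrm{Sel}_{\mathcal{A}^{**}}$ whose kernel and cokernel are both pseudo-null over $\mathcal{R}$. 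Since the characteristic ideal of a finitely generated torsion $\mathcal{R}$-module is insensitive to pseudo-null subquotients (and by Ochiai's torsionness result $\mathrm{Sel}_{\mathcal{A}}^{\vee}$ and $\mathrm{Sel}_{\mathcal{A}^{**}}^{\vee}$ are indeed torsion), the Pontryagin duals share the same characteristic ideal, giving $(L_{p}^{\mathrm{alg}}(\mathcal{T}^{(i)}))=(L_{p}^{\mathrm{alg}}(\mathcal{T}^{**(i)}))$.

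The main technical obstacle is the careful bookkeeping of local cohomology in the last paragraph: one must verify, for each $l\in\Sigma$ and in particular for the condition at $p$ imposed through $F^{+}$, that the local error terms $H^{1}(I_{l},-)$ and $H^{1}(I_{p},-/F^{+}(-))$ contributing to the Selmer defect all remain annihilated by a power of $\mathfrak{m}_{\mathbb{I}}$ and therefore remain pseudo-null over $\mathcal{R}$ -- this is what ultimately makes the characteristic-ideal comparison go through.
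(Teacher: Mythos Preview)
The paper does not supply its own proof of this lemma; it is quoted directly from \cite[Lemma~4.3]{Ochiai08} and used as a black box. So there is no in-paper argument to compare against, and your sketch is being assessed on its own merits.

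Your approach is the natural one and is essentially correct. The key points are exactly those you isolate: (i) over a two-dimensional regular local ring a reflexive finitely generated module is free, whence $\mathbb{T}^{**}$ and then $\mathcal{T}^{**(i)}$ are free; (ii) the cokernel $\mathbb{T}^{**}/\mathbb{T}$ is supported only at $\mathfrak{m}_{\mathbb{I}}$ because $\mathbb{T}$ is already reflexive after localizing at any height-one prime, so after base change the $\mathcal{R}$-module $\mathcal{Q}$ is pseudo-null; (iii) a pseudo-null discrepancy between $\mathcal{T}^{(i)}$ and $\mathcal{T}^{**(i)}$ propagates through the global and local cohomology groups defining the Selmer group as pseudo-null error terms, so the characteristic ideals agree. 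Your observation that $F^{+}\mathbb{T}^{**}/F^{+}\mathbb{T}$ injects into $Q$, hence is also pseudo-null, correctly handles the filtration at $p$. The only place to tighten the exposition is the last paragraph: rather than routing everything through $-\otimes_{\mathcal{R}}\mathcal{R}^{\vee}$ and $\mathrm{Tor}$, it is cleaner to note that every cohomology group appearing in the kernel/cokernel of $\mathrm{Sel}_{\mathcal{A}}\to\mathrm{Sel}_{\mathcal{A}^{**}}$ is a subquotient of a Galois cohomology group of a module annihilated by $\mathfrak{m}_{\mathbb{I}}^{n}$, and hence is itself annihilated by $\mathfrak{m}_{\mathbb{I}}^{n}\mathcal{R}$, which already forces pseudo-nullity over $\mathcal{R}$. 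This is presumably how Ochiai's original argument runs as well.
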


Suppose that $\mathbb{V}_{\mathcal{F}}$ has a $G_{\mathbb{Q}}$-stable $\mathbb{I}$-free lattice $\mathbb{T}$. Let $F^{-}\mathbb{T}=\mathbb{T}/F^{+}\mathbb{T}$. Note that $F^{+}\mathbb{T}$ and $F^{-}\mathbb{T}$ may not be free of rank one over $\mathbb{I}$. However, we have the following lemma which is proved by Fouquet and Ochiai \cite{FO12}:

\begin{lem}[Fouquet-Ochiai {\cite[Remark 2.13-(3)]{FO12}}]\label{FO}
Suppose that $\mathbb{V}_{\mathcal{F}}$ has a $G_{\mathbb{Q}}$-stable $\mathbb{I}$-free lattice $\mathbb{T}$. Assume the condition ($D_p$-dist), then $F^{+}\mathbb{T}$ and $F^{-}\mathbb{T}$ are free $\mathbb{I}$-modules of rank one.

\end{lem}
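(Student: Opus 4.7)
The plan is to exploit ($D_p$-dist) to construct an explicit $\mathbb{I}$-linear idempotent on $\mathbb{T}$ whose image is $F^{+}\mathbb{T}$, thereby exhibiting $F^{+}\mathbb{T}$ as an $\mathbb{I}$-direct summand of the free module $\mathbb{T}$. First, by ($D_p$-dist) I fix an element $g \in D_p$ on which the two residual characters differ: $\overline{\varepsilon}_1(g) \not\equiv \overline{\varepsilon}_2(g) \pmod{\mathfrak{m}_{\mathbb{I}}}$. The characteristic polynomial of $\rho_{\mathcal{F}}(g)$ acting on the free $\mathbb{I}$-module $\mathbb{T}$ has coefficients $\mathrm{tr}\,\rho_{\mathcal{F}}(g),\det\rho_{\mathcal{F}}(g) \in \mathbb{I}$, and reduces modulo $\mathfrak{m}_{\mathbb{I}}$ to the separable polynomial $(X-\overline{\varepsilon}_1(g))(X-\overline{\varepsilon}_2(g))$. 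Hensel's lemma, applicable since $\mathbb{I}$ is a complete local ring, lifts this factorization to $\mathbb{I}[X]$, producing eigenvalues $\varepsilon_1(g),\varepsilon_2(g) \in \mathbb{I}$ with $\varepsilon_1(g) - \varepsilon_2(g) \in \mathbb{I}^{\times}$.

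Next, I would form the idempotent
\begin{equation*}
e := \frac{\rho_{\mathcal{F}}(g) - \varepsilon_2(g)\cdot\mathrm{id}_{\mathbb{T}}}{\varepsilon_1(g) - \varepsilon_2(g)} \;\in\; \mathbb{I}[\rho_{\mathcal{F}}(g)] \;\subset\; \mathrm{End}_{\mathbb{I}}(\mathbb{T}).
\end{equation*}
A direct computation, using the Cayley--Hamilton relation $\rho_{\mathcal{F}}(g)^2 = (\varepsilon_1(g)+\varepsilon_2(g))\rho_{\mathcal{F}}(g) - \varepsilon_1(g)\varepsilon_2(g)$, verifies that $e^2 = e$. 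Since $e$ acts as the identity on the $\varepsilon_1(g)$-eigenspace and as zero on the $\varepsilon_2(g)$-eigenspace of $\mathbb{V}_{\mathcal{F}}$, we obtain $e\,\mathbb{V}_{\mathcal{F}} = F^{+}\mathbb{V}_{\mathcal{F}}$. It follows that $e\,\mathbb{T} = \mathbb{T} \cap F^{+}\mathbb{V}_{\mathcal{F}} = F^{+}\mathbb{T}$: the inclusion $\subseteq$ is immediate from $e\,\mathbb{T}\subset \mathbb{T}$ and $e\,\mathbb{T}\subset e\,\mathbb{V}_{\mathcal{F}} = F^+\mathbb{V}_{\mathcal{F}}$, and conversely any $v \in F^{+}\mathbb{T}$ satisfies $ev = v$, hence belongs to $e\,\mathbb{T}$.

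The resulting decomposition $\mathbb{T} = e\,\mathbb{T} \oplus (1-e)\mathbb{T} = F^{+}\mathbb{T} \oplus (1-e)\mathbb{T}$ exhibits both $F^{+}\mathbb{T}$ and $F^{-}\mathbb{T} \cong (1-e)\mathbb{T}$ as $\mathbb{I}$-direct summands of the free module $\mathbb{T}$. Each is therefore finitely generated projective over $\mathbb{I}$, hence free because $\mathbb{I}$ is local, and comparison with the one-dimensional generic fibers $F^{\pm}\mathbb{V}_{\mathcal{F}}$ forces each to have rank one.

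The only delicate step is the Hensel lifting, which relies on completeness of $\mathbb{I}$ and the separability provided by ($D_p$-dist); once the idempotent is in hand the rest is standard. I want to emphasize that the argument does \emph{not} require $e$ to be $D_p$-equivariant (which it generally is not): we only use that $e$ is $\mathbb{I}$-linear and that its image in the ambient $\mathbb{K}$-vector space coincides with the $D_p$-stable line $F^{+}\mathbb{V}_{\mathcal{F}}$, which is enough to identify the image $e\,\mathbb{T}$ with the $D_p$-submodule $F^{+}\mathbb{T}$ and conclude freeness.
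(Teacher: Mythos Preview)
Your argument is correct. The paper does not prove this lemma at all; it simply cites Fouquet--Ochiai \cite[Remark 2.13-(3)]{FO12} as a black box. Your idempotent construction is the standard way to establish such a splitting, and indeed the paper itself implicitly uses the same diagonalization idea later (in the proof of Lemma~\ref{3.7}, where a basis $\{e_1,e_2\}$ is chosen so that $\rho_{\mathcal{F}}(g_0)$ is diagonal with entries $\varepsilon_1(g_0),\varepsilon_2(g_0)$).

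One minor remark: the appeal to Hensel's lemma is unnecessary here. The characters $\varepsilon_1,\varepsilon_2:D_p\to\mathbb{I}^{\times}$ are already defined in the paper as $\mathbb{I}^{\times}$-valued (just before the statement of the lemma), so the factorization of the characteristic polynomial of $\rho_{\mathcal{F}}(g)$ over $\mathbb{I}$ is immediate from the upper-triangular shape of $\rho_{\mathcal{F}}|_{D_p}$. All you actually need is that $\varepsilon_1(g)-\varepsilon_2(g)\in\mathbb{I}^{\times}$, which follows directly from ($D_p$-dist) and the fact that $\mathbb{I}$ is local. This simplification does not affect the correctness of your argument, only its economy.
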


Recall that $\gamma$ (resp. $\gamma^{\prime}$) is a topological generator of $\Gamma$ (resp. $\Gamma^{\prime}$) and $u$ is a topological generator of $1+p\mathbb{Z}_p$ such that $$\kappa_{\mathrm{cyc}}\left(\gamma\right)=\kappa^{\prime}\left(\gamma^{\prime}\right)=u.$$We denote by $\kappa_{\mathrm{cyc}}^{\mathrm{univ}}$ the  following character of $G_{\mathbb{Q}}$:
\begin{equation*}
\kappa_{\mathrm{cyc}}^{\mathrm{univ}} : G_{\mathbb{Q}} \twoheadrightarrow \Gamma \stackrel{\kappa_{\mathrm{cyc}}}{\rightarrow} 1+p\mathbb{Z}_p \stackrel{{\kappa^{\prime}}^{-1}}{\rightarrow} \Gamma^{\prime} \hookrightarrow \Lambda^{\times}.
\end{equation*}
Let $I\left(\rho_{\mathcal{F}} \right)$ be the ideal of reducibility of $\mathbb{I}$ (cf. Definition \ref{152}). We have the following lemma:
\begin{lem}\label{3.7}
Let us keep the assumptions and the notation of Theorem \ref{yandongmain}. Then under the assumption $p \nmid \varphi\left(N \right)$, we have $$\mathrm{tr}\rho_{\mathcal{F}}\equiv\chi\kappa_{\mathrm{cyc}}\kappa_{\mathrm{cyc}}^{\mathrm{univ}}+\mathbf{1}\ \left(\mathrm{mod}\ I\left(\rho_{\mathcal{F}} \right) \right)$$ and $I\left(\rho_{\mathcal{F}} \right)=\mathbb{J}$.
\end{lem}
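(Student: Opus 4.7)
The plan is to prove the two inclusions $I(\rho_{\mathcal{F}})\subset\mathbb{J}$ and $\mathbb{J}\subset I(\rho_{\mathcal{F}})$ separately, and along the way to pin down the unique decomposition of the trace provided by Proposition~\ref{21}. As a preliminary, evaluating on $\mathrm{Frob}_l$ for $l\nmid Np$ and invoking Theorem~\ref{Hida} shows that $\det\rho_{\mathcal{F}}=\chi\kappa_{\mathrm{cyc}}\kappa_{\mathrm{cyc}}^{\mathrm{univ}}$ as characters of $G_{\mathbb{Q}}$ (both sides factor through $G_{\mathbb{Q},Np\infty}$, are continuous, and agree on a Chebotarev-dense set). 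To apply Proposition~\ref{21} to $\rho_{\mathcal{F}}$ itself, I would take $g_0=\mathrm{Frob}_l$ for an $l\nmid Np$ with $\overline{\chi}(l)\neq 1$; such $l$ exists by Chebotarev since $(D_p\text{-dist})$ forces $\overline{\chi}\neq\mathbf{1}$, and a short computation shows the residual discriminant of the characteristic polynomial equals $(1-\overline{\chi}(l))^2\not\equiv 0\pmod{\mathfrak{m}_{\mathbb{I}}}$.

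For $I(\rho_{\mathcal{F}})\subset\mathbb{J}$, by definition of $\mathbb{J}$ one has $\mathrm{tr}\,\rho_{\mathcal{F}}(\mathrm{Frob}_l)\equiv\chi(l)\langle l\rangle{\kappa'}^{-1}(\langle l\rangle)+1\pmod{\mathbb{J}}$ for every $l\nmid Np$, and the right side is precisely $(\chi\kappa_{\mathrm{cyc}}\kappa_{\mathrm{cyc}}^{\mathrm{univ}}+\mathbf{1})(\mathrm{Frob}_l)$. Since both functions are continuous on $G_{\mathbb{Q},Np\infty}$, Chebotarev extends the congruence to all of $G_{\mathbb{Q}}$, so $\mathrm{tr}\,\rho_{\mathcal{F}}\bmod\mathbb{J}$ splits as a sum of two characters; Proposition~\ref{21} then yields $I(\rho_{\mathcal{F}})\subset\mathbb{J}$.

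For the reverse inclusion, write the unique decomposition $\mathrm{tr}\,\rho_{\mathcal{F}}\equiv\vartheta_1+\vartheta_2\pmod{I(\rho_{\mathcal{F}})}$ from Proposition~\ref{21}, labelled so that $\vartheta_1\equiv\overline{\chi}$ and $\vartheta_2\equiv\mathbf{1}\pmod{\mathfrak{m}_{\mathbb{I}}}$. Because $\vartheta_1\vartheta_2\equiv\det\rho_{\mathcal{F}}=\chi\kappa_{\mathrm{cyc}}\kappa_{\mathrm{cyc}}^{\mathrm{univ}}\pmod{I(\rho_{\mathcal{F}})}$, it is enough to prove $\vartheta_2\equiv\mathbf{1}\pmod{I(\rho_{\mathcal{F}})}$. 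Working locally at $p$ on the $G_{\mathbb{Q}}$-stable $\mathbb{I}$-free lattice $\mathbb{T}^{**}$ (which is $\mathbb{I}$-free since $\mathbb{I}$ is regular), Lemma~\ref{FO} and Theorem~\ref{W88} give characters $\varepsilon_1,\varepsilon_2\colon D_p\to\mathbb{I}^{\times}$ with $\varepsilon_2$ unramified and $\mathrm{tr}\,\rho_{\mathcal{F}}|_{D_p}=\varepsilon_1+\varepsilon_2$ identically. The uniqueness clause of Proposition~\ref{21} applied to $\rho_{\mathcal{F}}|_{D_p}$ (whose ideal of reducibility is zero), combined with the residual matching permitted by $(D_p\text{-dist})$, forces $\vartheta_2|_{D_p}\equiv\varepsilon_2\pmod{I(\rho_{\mathcal{F}})}$, and hence $\vartheta_2|_{I_p}\equiv\mathbf{1}\pmod{I(\rho_{\mathcal{F}})}$. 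The global punchline uses the hypothesis $p\nmid\varphi(N)$: by class field theory $G_{\mathbb{Q}}^{\mathrm{ab},Np\infty}\cong(\mathbb{Z}/N)^{\times}\times\mu_{p-1}\times\Gamma$, and the first two factors, being of order prime to $p$, are killed by any character into the pro-$p$ group $1+\mathfrak{m}_{\mathbb{I}}/I(\rho_{\mathcal{F}})$, so $\vartheta_2$ factors through $\Gamma$; since $\mathbb{Q}_{\infty}/\mathbb{Q}$ is totally ramified at $p$, $I_p\twoheadrightarrow\Gamma$, and triviality of $\vartheta_2|_{I_p}$ propagates to $\vartheta_2\equiv\mathbf{1}$ on all of $G_{\mathbb{Q}}$. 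Evaluating the resulting trace congruence on $\mathrm{Frob}_l$ for $l\nmid Np$ and using $\vartheta_2(\mathrm{Frob}_p)=\varepsilon_2(\mathrm{Frob}_p)=a(p,\mathcal{F})\equiv 1$ places every generator of $\mathbb{J}$ inside $I(\rho_{\mathcal{F}})$.

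The main obstacle will be the global identification $\vartheta_2\equiv\mathbf{1}$: abstractly $\vartheta_2$ is only known to be residually trivial and unramified outside $Np\infty$, and its triviality relies on the simultaneous input of the unique local decomposition at $p$, the total ramification of $\mathbb{Q}_{\infty}/\mathbb{Q}$ at $p$, and the hypothesis $p\nmid\varphi(N)$, which collectively force $\vartheta_2$ to factor through $\Gamma$ and then to be trivial there.
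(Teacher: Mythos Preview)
Your proof is correct and follows essentially the same strategy as the paper's: one inclusion via Chebotarev and Proposition~\ref{21}, the other by showing the ``trivial'' character in the mod-$I(\rho_{\mathcal{F}})$ decomposition is literally $\mathbf{1}$ using unramifiedness of $\varepsilon_2$ together with class field theory and $p\nmid\varphi(N)$. The one cosmetic difference is the choice of $g_0$: the paper takes $g_0\in D_p$ and a basis adapted to the $D_p$-filtration from the outset, so that the diagonal entry $d$ satisfies $d|_{D_p}=\varepsilon_2$ on the nose; you take $g_0=\mathrm{Frob}_l$ globally and then invoke the uniqueness clause of Proposition~\ref{21} for $\rho_{\mathcal{F}}|_{D_p}$ to match $\vartheta_2|_{D_p}$ with $\varepsilon_2$. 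Both work (your global $g_0$ has eigenvalues in $\mathbb{I}$ by Hensel, since $\mathbb{I}\cong\mathcal{O}[[X]]$ is complete and the residual polynomial splits), but the paper's choice makes the local identification automatic and spares the second appeal to Proposition~\ref{21}.
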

\begin{proof}
Under the assumption ($D_p$-dist), one could choose an element $g_0 \in D_p$ such that $\overline{\varepsilon}_1(g_0) \neq \overline{\varepsilon}_2(g_0)$. Let $\left\{e_1, e_2 \right\}$ be a basis of $\mathbb{V}_{\mathcal{F}}$ such that 
\begin{equation}\label{0118}
\rho_{\mathcal{F}}(g_0) = \begin{pmatrix} \varepsilon_1\left(g_0 \right) & 0 \\ 0 & \varepsilon_2\left(g_0 \right) \end{pmatrix}\ \text{and}\ \rho_{\mathcal{F}}\mid_{D_p}= \begin{pmatrix} \varepsilon_1 & * \\ 0& \varepsilon_2 \end{pmatrix}.
\end{equation}
For any $g \in G_{\mathbb{Q}}$, write $\rho_{\mathcal{F}}(g) = \begin{pmatrix} a(g) & b(g) \\ c(g) & d(g) \end{pmatrix}$. We have that $\mathrm{tr}\,\rho_{\mathcal{F}}\,\mathrm{mod}\,\mathbb{J}$ is the sum of two characters by Chebotarev density theorem. Hence $I\left(\rho_{\mathcal{F}} \right) \subset \mathbb{J}$ by Proposition \ref{21}. Recall that under the assumptions (Red) and ($D_p$-dist), we have $\overline{\varepsilon}_2=\mathbf{1}$. Then under the assumption $p \nmid \varphi\left(N \right)$, we have $d\left(g \right) \equiv 1\left(g \right)\ \left(\mathrm{mod}\ I\left(\rho_{\mathcal{F}} \right) \right)$ for all $g \in G_{\mathbb{Q}}$ by class field theory (cf. \cite[Lemma 3.7]{Y}). Hence $a\left(g \right) \equiv \chi\kappa_{\mathrm{cyc}}\kappa_{\mathrm{cyc}}^{\mathrm{univ}}\left(g \right)\ \left(\mathrm{mod}\ I\left(\rho_{\mathcal{F}} \right) \right)$ since the determinant $\mathrm{det}\,\rho_{\mathcal{F}}=\chi\kappa_{\mathrm{cyc}}\kappa_{\mathrm{cyc}}^{\mathrm{univ}}$. Thus $$a\left(l, \mathcal{F} \right)-\chi\left(l \right)\langle l \rangle{\kappa^{\prime}}^{-1}\left(\langle l \rangle \right)-1=a\left(\mathrm{Frob}_l \right)+d\left(\mathrm{Frob}_l \right)-\chi\kappa_{\mathrm{cyc}}\kappa_{\mathrm{cyc}}^{\mathrm{univ}}\left(\mathrm{Frob}_l \right)-1$$ is an element of $I\left(\rho_{\mathcal{F}} \right)$ for all primes $l \nmid Np$. Since $\varepsilon_2\left(\mathrm{Frob}_p \right)=a\left(p, \mathcal{F} \right)$ by \cite[Theorem 2.2.2]{Wi88}, we also have $a\left(p, \mathcal{F} \right)-1 \in I\left(\rho_{\mathcal{F}} \right)$. This completes the proof.

\end{proof}

\subsection{Proof of Theorem \ref{yandongmain} its corollary}
Under the above preparation, we return to the proof of Theorem \ref{yandongmain} and Corollary \ref{canonical}. First we obtain the following lemma on the relation between the ideal of reducibility and its localization. 
\begin{lem}\label{redu}
Let us keep the assumptions and the notation of Theorem \ref{yandongmain}. For any $\mathfrak{p}\in P^{1}\left(\mathbb{I} \right)$, we denote by $I\left(\rho_{\mathcal{F}, \mathfrak{p}} \right)$ the ideal of reducibility of $\mathbb{I}_{\mathfrak{p}}$ corresponding to the representation $$\rho_{\mathcal{F}, \mathfrak{p}}: G_{\mathbb{Q}} \stackrel{\rho_{\mathcal{F}}}{\rightarrow} \mathrm{GL}_2\left(\mathbb{K} \right)=\mathrm{GL}_2\left(\mathrm{Frac}\left(\mathbb{I}_{\mathfrak{p}} \right) \right).$$ Then we have $I\left(\rho_{\mathcal{F}, \mathfrak{p}} \right)=I\left(\rho_{\mathcal{F}} \right)\mathbb{I}_{\mathfrak{p}}$.
\end{lem}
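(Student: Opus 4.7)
The plan is to show that $I(\rho_{\mathcal{F}})$ and $I(\rho_{\mathcal{F},\mathfrak{p}})$ are generated by exactly the same collection of elements of $\mathbb{K}$, viewed inside $\mathbb{I}$ and $\mathbb{I}_{\mathfrak{p}}$ respectively. Once this is established, the equality $I(\rho_{\mathcal{F},\mathfrak{p}})=I(\rho_{\mathcal{F}})\mathbb{I}_{\mathfrak{p}}$ reduces to the standard fact that extension of ideals commutes with localization.

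First I would verify that Proposition \ref{21} applies to $\rho_{\mathcal{F},\mathfrak{p}}$ with the same element $g_0\in D_p$ that was used in the proof of Lemma \ref{3.7}. The key point is that under ($D_p$-dist) the difference $\varepsilon_1(g_0)-\varepsilon_2(g_0)$ is a unit of $\mathbb{I}$, because its reduction modulo $\mathfrak{m}_{\mathbb{I}}$ is nonzero; hence it is a fortiori a unit of $\mathbb{I}_{\mathfrak{p}}$, and the two eigenvalues $\varepsilon_1(g_0),\varepsilon_2(g_0)\in\mathbb{I}\subset\mathbb{I}_{\mathfrak{p}}$ remain distinct modulo the maximal ideal $\mathfrak{p}\mathbb{I}_{\mathfrak{p}}$. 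The residue field of $\mathbb{I}_{\mathfrak{p}}$ has characteristic $0$ or $p$, and $p$ is odd since it is irregular, so all hypotheses of Proposition \ref{21} are satisfied for $\rho_{\mathcal{F},\mathfrak{p}}$ (the trace condition is immediate from $\mathrm{tr}\,\rho_{\mathcal{F}}(G_{\mathbb{Q}})\subset\mathbb{I}\subset\mathbb{I}_{\mathfrak{p}}$).

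Next I would fix a $\mathbb{K}$-basis $\{e_1,e_2\}$ diagonalising $\rho_{\mathcal{F}}(g_0)$ exactly as in the proof of Proposition \ref{21}, and write
\begin{equation*}
\rho_{\mathcal{F}}(g)=\begin{pmatrix}a(g)&b(g)\\ c(g)&d(g)\end{pmatrix}
\end{equation*}
in this basis. Since the underlying representation does not change when scalars are extended from $\mathbb{I}$ to $\mathbb{I}_{\mathfrak{p}}$ (the fraction field $\mathbb{K}$ is the same), the matrix-entry functions $a,b,c,d\colon G_{\mathbb{Q}}\to\mathbb{K}$ are common to $\rho_{\mathcal{F}}$ and $\rho_{\mathcal{F},\mathfrak{p}}$. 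By the explicit construction in the proof of Proposition \ref{21}, the ideal $I(\rho_{\mathcal{F}})$ is the $\mathbb{I}$-ideal generated by $\{b(g)c(g'):g,g'\in G_{\mathbb{Q}}\}$, and $I(\rho_{\mathcal{F},\mathfrak{p}})$ is the $\mathbb{I}_{\mathfrak{p}}$-ideal generated by the very same subset of $\mathbb{I}$. Consequently $I(\rho_{\mathcal{F},\mathfrak{p}})=I(\rho_{\mathcal{F}})\mathbb{I}_{\mathfrak{p}}$, which is the desired equality.

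There is no substantial obstacle; the only delicate point is to observe that the distinguishing element $g_0$ furnished by ($D_p$-dist) globally produces eigenvalues that are units of $\mathbb{I}$, so it can be used verbatim over every localization $\mathbb{I}_{\mathfrak{p}}$, thus allowing us to fix the common basis $\{e_1,e_2\}$ and identify the two generating sets.
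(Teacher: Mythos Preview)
Your proof is correct and follows essentially the same approach as the paper: use the element $g_0\in D_p$ from ($D_p$-dist) to diagonalize $\rho_{\mathcal{F}}(g_0)$ in a fixed $\mathbb{K}$-basis, observe that $\varepsilon_1(g_0)\not\equiv\varepsilon_2(g_0)\pmod{\mathfrak{m}_{\mathbb{I}}}$ forces the same inequality modulo $\mathfrak{p}\mathbb{I}_{\mathfrak{p}}$, and then note that the explicit construction of the ideal of reducibility in Proposition~\ref{21} produces the same generating set $\{b(g)c(g')\}$ over both $\mathbb{I}$ and $\mathbb{I}_{\mathfrak{p}}$. Your write-up is in fact more detailed than the paper's, which simply says ``the proof is by definition'' and points to the constructions in Proposition~\ref{21}.
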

\begin{proof}
The proof is by definition. Clearly we have $\mathrm{tr}\,\rho_{\mathcal{F}} \subset \mathbb{I} \subset \mathbb{I}_{\mathfrak{p}}$. By the proof of Lemma \ref{3.7}, there exist a $\mathbb{K}$-basis $\set{e_1, e_2}$ of $\mathbb{V}_{\mathcal{F}}$ and an element $g_0 \in D_p$ such that 
\begin{equation}
\rho_{\mathcal{F}}(g_0) = \begin{pmatrix} \varepsilon_1\left(g_0 \right) & 0 \\ 0 & \varepsilon_2\left(g_0 \right) \end{pmatrix}, \varepsilon_1\left(g_0 \right) \not\equiv \varepsilon_2\left(g_0 \right) \pmod{\mathfrak{m}_{\mathbb{I}}}
\end{equation}
with respect to the basis $\set{e_1, e_2}$. This implies $\varepsilon_1\left(g_0 \right) \not\equiv \varepsilon_2\left(g_0 \right) \pmod{\mathfrak{p}}$. Thus, we complete the proof by constructions of $I\left(\rho_{\mathcal{F}, \mathfrak{p}} \right)$ and $I\left(\rho_{\mathcal{F}}\right)$ which are done in the proof of Proposition \ref{21}. 

\end{proof}

The assumption that $\mathbb{I}$ is a regular local ring enables us to take a $G_{\mathbb{Q}}$-stable $\mathbb{I}$-free lattice $\mathbb{T}$. We fix such $\mathbb{T}$ to the end of the proof of Lemma \ref{1108}. For any $\mathfrak{p}\in P^{1}\left(\mathbb{I} \right)$, we denote by $\mathscr{L}\left(\rho_{\mathcal{F}, \mathfrak{p}} \right)$ the set of isomorphic classes of $G_{\mathbb{Q}}$-stable $\mathbb{I}_{\mathfrak{p}}$-lattices of $\mathbb{V}_{\mathcal{F}}$. Since $\mathbb{I}_{\mathfrak{p}}$ is a discrete valuation ring, we have
\begin{equation}\label{190228}
\sharp\mathscr{C}\left(\rho_{\mathcal{F}, \mathfrak{p}} \right)=\sharp\mathscr{L}\left(\rho_{\mathcal{F}, \mathfrak{p}} \right)=\mathrm{ord}_{\mathfrak{p}}I\left(\rho_{\mathcal{F}, \mathfrak{p}} \right) +1
\end{equation}
by Proposition \ref{22}. Then by combining Lemma \ref{3.7} and Lemma \ref{redu}, the equality \eqref{190228} becomes to
\begin{equation}\label{190128}
\sharp\mathscr{C}\left(\rho_{\mathcal{F}, \mathfrak{p}} \right)=\sharp\mathscr{L}\left(\rho_{\mathcal{F}, \mathfrak{p}} \right)=\mathrm{ord}_{\mathfrak{p}}\mathbb{J}_{\mathfrak{p}} +1.
\end{equation}
We define $\mathcal{N}$ the subset of $P^1\left(\mathbb{I} \right)$ as follows:$$\mathcal{N}=\set{\mathfrak{p} \in P^{1}\left(\mathbb{I} \right) | \mathrm{ord}_{\mathfrak{p}}\mathbb{J}_{\mathfrak{p}} \neq 0}.$$ Since $\mathbb{I}$ is a unique factorization domain, every height one prime ideal of $\mathbb{I}$ is principal. Then $\mathcal{N}$ is finite. First we assume that $\mathcal{N}$ is non-empty. Let $\mathcal{N}=\set{\mathfrak{p}_1, \cdots, \mathfrak{p}_r}$ and let us take an element $\mathfrak{p}_i \in \mathcal{N}$. Write $\mathrm{ord}_{\mathfrak{p}_i}\mathbb{J}_{\mathfrak{p}_i}=n_i$. We have $$\mathrm{tr}\,\rho_{\mathcal{F}} \equiv \chi\kappa_{\mathrm{cyc}}\kappa_{\mathrm{cyc}}^{\mathrm{univ}}+\mathbf{1} \pmod{\mathfrak{p}_i^{n_i}\mathbb{I}_{\mathfrak{p}_i}}$$ by Lemma \ref{3.7}. Then by Proposition \ref{22}-(3), there exists a chain of $G_{\mathbb{Q}}$-stable $\mathbb{I}_{\mathfrak{p}_i}$-lattices $$\mathbb{T}_i^{(n_i)} \supsetneq \cdots \supsetneq \mathbb{T}_i^{(0)}$$ of $\mathbb{V}_{\mathcal{F}}$ such that $\mathbb{T}_i^{(n_i)} \supsetneq \cdots \supsetneq \mathbb{T}_i^{(0)}$ is a system of  representatives of $\mathscr{C}\left(\rho_{\mathcal{F}, \mathfrak{p}_i} \right)$ and $\mathscr{L}\left(\rho_{\mathcal{F}, \mathfrak{p}_i} \right)$ which satisfies the following condition:
\begin{list}{}{}
\item[(Type $\mathbf{1}$)]The $\mathbb{I}_{\mathfrak{p}_i}$-module $\mathbb{T}_i^{(j_i)}/\mathbb{T}_i^{(0)}$ is isomorphic to $\mathbb{I}_{\mathfrak{p}_i}/\mathfrak{p}_i^{j_i}$ for every $1 \leq j_i \leq n_i$ and $G_{\mathbb{Q}}$ acts on $\mathbb{T}_i^{(n_i)}/\mathbb{T}_i^{(0)}$ trivially.
\end{list}
This implies that the $\mathbb{I}_{\mathfrak{p}_i}[G_{\mathbb{Q}}]$-module $\mathbb{T}_i^{(j_i)}/\mathbb{T}_i^{(0)}$ is isomorphic to $\mathbb{I}_{\mathfrak{p}_i}/\mathfrak{p}_i^{j_i} \left(\mathbf{1} \right)$ for every $j_i=1, \cdots, n_i$.

For each $\mathfrak{p}_i \in \mathcal{N}$ and $0 \leq j_i \leq n_i$, we define the module $\mathbb{T}\left(j_1, \cdots, j_r \right)$ as follows:
\begin{equation}\label{04281}
\mathbb{T}\left(j_1, \cdots, j_r \right)=\bigcap_{\mathfrak{p} \not\in \mathcal{N}}\mathbb{T}_{\mathfrak{p}}\bigcap_{\mathfrak{p}_i \in \mathcal{N}}\mathbb{T}_i^{(j_i)}.
\end{equation}
Then $\mathbb{T}\left(j_1, \cdots, j_r \right)$ is a reflexive lattice and we have
\begin{equation}\label{1901128}
\mathbb{T}\left(j_1, \cdots, j_r \right)_{\mathfrak{p}}=\begin{cases}
\mathbb{T}_{\mathfrak{p}} & \left(\mathfrak{p} \not\in \mathcal{N} \right) \\
\mathbb{T}_i^{\left(j_i \right)} & \left(\mathfrak{p}=\mathfrak{p}_i\in \mathcal{N} \right) \\
\end{cases}
\end{equation}
by \cite[Chap. \RomanNumeralCaps{7}. \S 4.3, Theorem 3-(ii)]{Bour}. Since every $\mathbb{T}_{\mathfrak{p}}$ and $\mathbb{T}_i^{\left(j_i \right)}$ are stable under the action of $G_{\mathbb{Q}}$, $\mathbb{T}\left(j_1, \cdots, j_r \right)$ is a $G_{\mathbb{Q}}$-stable lattice. Furthermore, under the assumption that $\mathbb{I}$ is a regular local ring, $\mathbb{T}\left(j_1, \cdots, j_r \right)$ is free over $\mathbb{I}$. Thus $\mathbb{T}\left(j_1, \cdots, j_r \right)$ is a $G_{\mathbb{Q}}$-stable $\mathbb{I}$-free lattice of $\mathbb{V}_{\mathcal{F}}$. 
\begin{lem}\label{1108}
When $\mathcal{N}$ is non-empty, $\set{\mathbb{T}\left(j_1, \cdots, j_r \right) | i=1, \cdots, r, j_i=0, \cdots, n_i}$ is a set of representatives of $\mathscr{L}^{\mathrm{fr}}\left(\rho_{\mathcal{F}} \right)$ and $\mathscr{C}^{\mathrm{fr}}\left(\rho_{\mathcal{F}} \right)$. When $\mathcal{N}$ is empty, $\mathscr{L}^{\mathrm{fr}}\left(\rho_{\mathcal{F}} \right)$ (resp. $\mathscr{C}^{\mathrm{fr}}\left(\rho_{\mathcal{F}} \right)$) consist only of the isomorphic class (resp. homothetic class) of $\mathbb{T}$.
\end{lem}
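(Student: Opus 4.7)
\textbf{Proof proposal for Lemma \ref{1108}.} The plan is to classify $G_{\mathbb{Q}}$-stable $\mathbb{I}$-free lattices in $\mathbb{V}_{\mathcal{F}}$ by matching their localizations at each height-one prime with the local classification of Proposition \ref{22}, and then globalizing via the reflexivity of free $\mathbb{I}$-modules together with the fact that $\mathbb{I} \cong \mathcal{O}[[X]]$ is a unique factorization domain.

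First I will verify distinctness. If $\mathbb{T}(j_1,\ldots,j_r)$ and $\mathbb{T}(j_1',\ldots,j_r')$ were homothetic, then localizing at each $\mathfrak{p}_i \in \mathcal{N}$ would force $\mathbb{T}_i^{(j_i)}$ and $\mathbb{T}_i^{(j_i')}$ to be homothetic $\mathbb{I}_{\mathfrak{p}_i}[G_{\mathbb{Q}}]$-lattices, hence $j_i = j_i'$ by the construction of the chain $\mathbb{T}_i^{(n_i)} \supsetneq \cdots \supsetneq \mathbb{T}_i^{(0)}$ in Proposition \ref{22}-(3). Moreover, since the proof of Lemma \ref{3.7} furnishes an element $g_0 \in D_p$ with $\rho_{\mathcal{F}}(g_0)$ diagonalizable with distinct eigenvalues $\varepsilon_1(g_0), \varepsilon_2(g_0) \in \mathbb{I}$, the representation $\rho_{\mathcal{F}}$ is absolutely irreducible over $\mathbb{K}$; Schur's lemma then implies that any $\mathbb{I}[G_{\mathbb{Q}}]$-isomorphism between two lattices in $\mathbb{V}_{\mathcal{F}}$ extends to multiplication by a scalar of $\mathbb{K}^{\times}$. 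Thus homothety and isomorphism coincide on $G_{\mathbb{Q}}$-stable lattices of $\mathbb{V}_{\mathcal{F}}$, and distinctness is settled simultaneously for $\mathscr{C}^{\mathrm{fr}}(\rho_{\mathcal{F}})$ and $\mathscr{L}^{\mathrm{fr}}(\rho_{\mathcal{F}})$.

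Next, given an arbitrary $G_{\mathbb{Q}}$-stable $\mathbb{I}$-free lattice $\mathbb{T}'$, I localize at each height-one prime. For $\mathfrak{p}_i \in \mathcal{N}$, equation \eqref{190128} together with Proposition \ref{22}-(3) yields a unique $j_i \in \{0,\ldots,n_i\}$ such that $\mathbb{T}'_{\mathfrak{p}_i}$ is homothetic to $\mathbb{T}_i^{(j_i)}$; for $\mathfrak{p} \notin \mathcal{N}$ the homothety class of $G_{\mathbb{Q}}$-stable $\mathbb{I}_{\mathfrak{p}}$-lattices is unique, so $\mathbb{T}'_{\mathfrak{p}}$ is homothetic to $\mathbb{T}_{\mathfrak{p}} = \mathbb{T}(j_1,\ldots,j_r)_{\mathfrak{p}}$. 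Hence one obtains scalars $c_{\mathfrak{p}} \in \mathbb{K}^{\times}$, well-defined modulo $\mathbb{I}_{\mathfrak{p}}^{\times}$ and equal to units for all but finitely many $\mathfrak{p}$, satisfying $\mathbb{T}'_{\mathfrak{p}} = c_{\mathfrak{p}} \cdot \mathbb{T}(j_1,\ldots,j_r)_{\mathfrak{p}}$.

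The main obstacle, and the heart of the argument, is the global descent: I must produce a single $c \in \mathbb{K}^{\times}$ with $v_{\mathfrak{p}}(c) = v_{\mathfrak{p}}(c_{\mathfrak{p}})$ for every $\mathfrak{p} \in P^1(\mathbb{I})$. Since $\mathbb{I} \cong \mathcal{O}[[X]]$ is a regular local ring, it is a UFD, so every divisor supported at height one is principal, yielding such a $c$. Then $c\mathbb{T}'$ and $\mathbb{T}(j_1,\ldots,j_r)$ share every localization at height-one primes, and since both are reflexive (being $\mathbb{I}$-free), \cite[Chap. \RomanNumeralCaps{7}. \S 4.3, Theorem 3-(ii)]{Bour} gives $c\mathbb{T}' = \mathbb{T}(j_1,\ldots,j_r)$. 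The case $\mathcal{N} = \emptyset$ is the same argument with an empty tuple of local data, forcing every $G_{\mathbb{Q}}$-stable $\mathbb{I}$-free lattice to be homothetic, hence isomorphic, to the reference lattice $\mathbb{T}$.
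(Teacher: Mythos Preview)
Your proof is correct and follows the same strategy as the paper: localize an arbitrary $G_{\mathbb{Q}}$-stable free lattice at each height-one prime, match it against the local classification of Proposition~\ref{22} and equation~\eqref{190128}, then globalize via the UFD property of $\mathbb{I}$ together with reflexivity (\cite[Chap.~\RomanNumeralCaps{7}, \S4.3, Theorem~3]{Bour}). The only differences are cosmetic: for distinctness in $\mathscr{L}^{\mathrm{fr}}$ you pass through Schur's lemma to identify isomorphism with homothety, whereas the paper argues directly that distinct tuples yield non-isomorphic localizations at some $\mathfrak{p}_i$; and in your final paragraph the scaling should read $c^{-1}\mathbb{T}'$ rather than $c\mathbb{T}'$.
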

\begin{proof}
First we assume that $\mathcal{N}$ is non-empty. Let us take a $G_{\mathbb{Q}}$-stable $\mathbb{I}$-free lattice $\mathbb{T}^{\prime}$. By multiplying an element of $\mathbb{I}$ if necessary, we may assume $\mathbb{T}^{\prime} \subset \mathbb{T}\left(n_1, \cdots, n_r \right)$. Let us take an element $\mathfrak{p} \in P^1\left(\mathbb{I} \right)$ and let us consider the following cases:
\begin{enumerate}
\item[(a)]When $\mathfrak{p} \not\in \mathcal{N}$, $\mathscr{L}\left(\rho_{\mathcal{F}, \mathfrak{p}} \right)$ (resp. $\mathscr{C}\left(\rho_{\mathcal{F}, \mathfrak{p}} \right)$) consists only of the isomorphic (resp. homothetic) class of $\mathbb{T}_{\mathfrak{p}}$ by \eqref{190128}. Since $\mathbb{T}\left(n_1, \cdots, n_r \right)_{\mathfrak{p}}=\mathbb{T}_{\mathfrak{p}}$ by \eqref{1901128}, under the assumption $\mathbb{T}^{\prime} \subset \mathbb{T}\left(n_1, \cdots, n_r \right)$, there exists an integer $e_{\mathfrak{p}} \in \mathbb{Z}_{\geq 0}$ such that $\mathbb{T}_{\mathfrak{p}}^{\prime}=\mathfrak{p}^{e_{\mathfrak{p}}}\mathbb{T}_{\mathfrak{p}}.$
\item[(b)]When $\mathfrak{p}=\mathfrak{p}_i \in \mathcal{N}$, $\mathbb{T}_i^{(n_i)} \supsetneq \cdots \supsetneq \mathbb{T}_i^{(0)}$ is a system of  representatives of $\mathscr{L}\left(\rho_{\mathcal{F}, \mathfrak{p}_i} \right)$ and $\mathscr{C}\left(\rho_{\mathcal{F}, \mathfrak{p}_i} \right)$. Then under the assumption $\mathbb{T}^{\prime} \subset \mathbb{T}\left(n_1, \cdots, n_r \right)$, there exist an integer $e_{\mathfrak{p}_i} \in \mathbb{Z}_{\geq 0}$ and an integer $0 \leq j_i \leq n_i$ such that $\mathbb{T}_{\mathfrak{p}_i}=\mathfrak{p}_i^{e_{\mathfrak{p}_i}}\mathbb{T}_i^{\left(j_i \right)}$. 
\end{enumerate}
We have $\mathbb{T}^{\prime}_{\mathfrak{p}}=\mathbb{T}_{\mathfrak{p}}$ for all but finitely many $\mathfrak{p} \in P^1\left(\mathbb{I} \right)$ by \cite[Chap. \RomanNumeralCaps{7}. \S 4.3, Theorem 3-(i)]{Bour}. Thus the integers $e_{\mathfrak{p}}$ in cases (a) and (b) are $0$ for all but finitely many $\mathfrak{p}\in P^1\left(\mathbb{I} \right)$. Furthermore, since $\mathbb{I}$ is a UFD, every height-one prime ideal is principal. Then $\displaystyle\prod_{\mathfrak{p} \in P^{1}\left(\mathbb{I} \right)}\mathfrak{p}^{e_{\mathfrak{p}}}$ is generated by an element $x \in \mathbb{I}$. Let $\mathbb{T}^{\prime\prime}=x\mathbb{T}\left(j_1, \cdots, j_r \right).$ We have $\mathbb{T}^{\prime}_{\mathfrak{p}}=\mathbb{T}^{\prime\prime}_{\mathfrak{p}}$ for any $\mathfrak{p} \in P^{1}\left(\mathbb{I} \right)$. Since $\mathbb{T}^{\prime}$ and $\mathbb{T}^{\prime\prime}$ are reflexive lattices, we have $$\mathbb{T}^{\prime}=\bigcap_{\mathfrak{p} \in P^{1}\left(\mathbb{I} \right)}\mathbb{T}^{\prime}_{\mathfrak{p}}=\bigcap_{\mathfrak{p} \in P^{1}\left(\mathbb{I} \right)}\mathbb{T}^{\prime\prime}_{\mathfrak{p}}=\mathbb{T}^{\prime\prime}.$$ This implies that $\set{\mathbb{T}\left(j_1, \cdots, j_r \right) | i=1, \cdots, r, j_i=0, \cdots, n_i}$ is a system of  representatives of $\mathscr{C}^{\mathrm{fr}}\left(\rho_{\mathcal{F}} \right)$.

By our construction, there exists a prime ideal $\mathfrak{p} \in P^{1}\left(\mathbb{I} \right)$ such that $\mathbb{T}\left(j_1, \cdots, j_r \right)_{\mathfrak{p}}$ and $\mathbb{T}\left(j_1^{\prime}, \cdots, j_r^{\prime} \right)_{\mathfrak{p}}$ are non-isomorphic as $\mathbb{I}_{\mathfrak{p}}[G_{\mathbb{Q}}]$-modules for $\left(j_1, \cdots, j_r \right) \neq \left(j_1^{\prime}, \cdots, j_r^{\prime} \right)$. Thus lattices $\mathbb{T}\left(j_1, \cdots, j_r \right)$ and $\mathbb{T}\left(j_1^{\prime}, \cdots, j_r^{\prime} \right)$ are not isomorphic as $\mathbb{I}[G_{\mathbb{Q}}]$-modules. This implies that $\set{\mathbb{T}\left(j_1, \cdots, j_r \right) | i=1, \cdots, r, j_i=0, \cdots, n_i}$ is also a system of  representatives of $\mathscr{L}^{\mathrm{fr}}\left(\rho_{\mathcal{F}} \right)$.

Now we assume that $\mathcal{N}$ is empty. Let us take a $G_{\mathbb{Q}}$-stable $\mathbb{I}$-free lattice $\mathbb{T}^{\prime}$. By multiplying an element of $\mathbb{I}$ if necessary, we may assume $\mathbb{T}^{\prime} \subset \mathbb{T}$. Under the assumption that $\mathcal{N}$ is empty, any prime ideal $\mathfrak{p} \in P^1\left(\mathbb{I} \right)$ belongs to case (a) above. Then by the same argument, we have that there exists an element $x^{\prime}\in\mathbb{I}$ such that $\mathbb{T}^{\prime}=x^{\prime}\mathbb{T}$. This completes the proof of Lemma \ref{1108}.

\end{proof}

When $\mathbb{J}^{**}=\mathbb{I}$, $\mathscr{L}^{\mathrm{fr}}\left(\rho_{\mathcal{F}} \right)$ consists of a unique element by Lemma \ref{1108}, hence all statements in Theorem \ref{yandongmain} follows. Thus it is sufficient to consider the case when $\mathbb{J}^{**} \subsetneq \mathfrak{m}_{\mathbb{I}}$ in the rest of the proof. This enables us to take an element $\left(j_1, \cdots, j_r \right) \neq \left(0, \cdots, 0 \right)$ which we fixed in the rest of the proof. 

\begin{dfn}
Let $A$ be a Noetherian local domain and $M$ a finitely generated $A$-module. Let $\delta: G \rightarrow A^{\times}$ be a character of a group $G$. We call that $M$ is an $A[G]$-module of \textit{type $\delta$} if $M$ is a cyclic $A$-module and $G$ acts on $M$ via $\delta$.
\end{dfn}

\begin{lem}\label{11081}
We have the equality $F^{+}\mathbb{T}\left(j_1, \cdots, j_r \right)=F^{+}\mathbb{T}\left(0, \cdots, 0 \right)$.
\end{lem}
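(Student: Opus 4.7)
The plan is to reduce the equality to a check at each height-one prime of $\mathbb{I}$ and to exploit the incompatibility between the trivial $G_{\mathbb{Q}}$-action inherited from $\mathbb{T}_i^{(n_i)}/\mathbb{T}_i^{(0)}$ and the non-trivial $D_p$-action via $\varepsilon_1$ that $F^{+}$ inherits from $F^{+}\mathbb{V}_{\mathcal{F}}$.

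First I would localize. Both $F^{+}\mathbb{T}(j_1,\ldots,j_r)$ and $F^{+}\mathbb{T}(0,\ldots,0)$ are reflexive rank-one $\mathbb{I}$-modules by Lemma \ref{FO}, so the inclusion $F^{+}\mathbb{T}(0,\ldots,0)\subset F^{+}\mathbb{T}(j_1,\ldots,j_r)$ is an equality if and only if it becomes one after localizing at every $\mathfrak{p}\in P^{1}(\mathbb{I})$. For $\mathfrak{p}\notin\mathcal{N}$ the two lattices already agree locally by \eqref{1901128}, so the task reduces to showing $F^{+}\mathbb{T}_i^{(j_i)}=F^{+}\mathbb{T}_i^{(0)}$ for each $\mathfrak{p}_i\in\mathcal{N}$, where $F^{+}\mathbb{T}_i^{(j_i)}:=\mathbb{T}_i^{(j_i)}\cap F^{+}\mathbb{V}_{\mathcal{F}}$.

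I would then set $M:=F^{+}\mathbb{T}_i^{(j_i)}/F^{+}\mathbb{T}_i^{(0)}$ and describe the $D_p$-action on it in two different ways. On the one hand, since $F^{+}\mathbb{T}_i^{(j_i)}\cap\mathbb{T}_i^{(0)}=\mathbb{T}_i^{(j_i)}\cap F^{+}\mathbb{V}_{\mathcal{F}}\cap\mathbb{T}_i^{(0)}=F^{+}\mathbb{T}_i^{(0)}$, the natural map $M\hookrightarrow\mathbb{T}_i^{(j_i)}/\mathbb{T}_i^{(0)}$ is injective; the target is a sub-module of $\mathbb{T}_i^{(n_i)}/\mathbb{T}_i^{(0)}$ on which $G_{\mathbb{Q}}$ (and \emph{a fortiori} $D_p$) acts trivially by the (Type $\mathbf{1}$) condition, so $g\cdot m=m$ for every $g\in D_p$ and $m\in M$. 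On the other hand, $F^{+}\mathbb{V}_{\mathcal{F}}$ is $D_p$-stable with $D_p$ acting through $\varepsilon_1$, so on $M$ we also have $g\cdot m=\varepsilon_1(g)\cdot m$. Combining the two, $(\varepsilon_1(g)-1)\cdot M=0$ for every $g\in D_p$.

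Finally I would pick a distinguished element $g\in D_p$. By (Red) and ($D_p$-dist), the residual character $\overline{\varepsilon_1}=\overline{\chi}|_{D_p}$ is non-trivial, so some $g\in D_p$ satisfies $\varepsilon_1(g)-1\notin\mathfrak{m}_{\mathbb{I}}$; this makes $\varepsilon_1(g)-1$ a unit in $\mathbb{I}$ and hence also in $\mathbb{I}_{\mathfrak{p}_i}$, and annihilating $M$ by a unit forces $M=0$. This yields $F^{+}\mathbb{T}_i^{(j_i)}=F^{+}\mathbb{T}_i^{(0)}$ at each $\mathfrak{p}_i\in\mathcal{N}$, whence the lemma. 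The only delicate point is extracting such a $g\in D_p$ from the reducibility and distinguishedness hypotheses; once one has it, the vanishing $M=0$ is forced automatically by the two incompatible descriptions of the $D_p$-action on the same module.
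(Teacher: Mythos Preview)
Your argument is correct and rests on the same idea as the paper's proof: the quotient $F^{+}\mathbb{T}_{\mathbf{j}}/F^{+}\mathbb{T}_{\mathbf{0}}$ injects into $\mathbb{T}_{\mathbf{j}}/\mathbb{T}_{\mathbf{0}}$, and the $D_p$-action via $\varepsilon_1$ on the former is incompatible with the trivial (Type~$\mathbf{1}$) action on the latter once one invokes ($D_p$-dist). The paper organizes this as a proof by contradiction at the global level using the snake lemma on the diagram \eqref{Keycom} and then localizes at a single $\mathfrak{p}_i\in\mathcal{N}$ to reach the contradiction, whereas you localize first and argue directly that $(\varepsilon_1(g)-1)M=0$ with $\varepsilon_1(g)-1$ a unit; both routes are equivalent and equally short.
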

\begin{proof}
Write $\mathbb{T}_{\mathbf{0}}=\mathbb{T}\left(0, \cdots, 0 \right)$ and $\mathbb{T}_{\mathbf{j}}=\mathbb{T}\left(j_1, \cdots, j_r \right)$ for short. We have $\mathbb{T}_{\mathbf{0}} \subset \mathbb{T}_{\mathbf{j}}$ by our construction, hence $F^{+}\mathbb{T}_{\mathbf{0}} \subset F^{+}\mathbb{T}_{\mathbf{j}}$. We prove this Lemma by contradiction. Assume $F^{+}\mathbb{T}_{\mathbf{0}} \subsetneq F^{+}\mathbb{T}_{\mathbf{j}}$. Let us consider the following commutative diagram of $\mathbb{I}[D_p]$-modules:
\begin{equation}\label{Keycom}
 \xymatrix{
   0 \ar[r]   & F^{+}\mathbb{T}_{\mathbf{0}} \ar[r] \ar@{^{(}-_>}[d] & \mathbb{T}_{\mathbf{0}} \ar[r] \ar@{^{(}-_>}[d] & F^{-}\mathbb{T}_{\mathbf{0}} \ar[r] \ar[d] & 0 \\
    0 \ar[r] & F^{+}\mathbb{T}_{\mathbf{j}} \ar[r] & \mathbb{T}_{\mathbf{j}} \ar[r] & F^{-}\mathbb{T}_{\mathbf{j}} \ar[r] & 0.
  }
\end{equation}
We have the following exact sequence by the snake lemma:
\begin{equation}\label{190108}
0 \rightarrow \mathrm{Ker}\left(F^{-}\mathbb{T}_{\mathbf{0}} \rightarrow F^{-}\mathbb{T}_{\mathbf{j}} \right) \rightarrow \left.F^{+}\mathbb{T}_{\mathbf{j}}\right/F^{+}\mathbb{T}_{\mathbf{0}} \rightarrow \left.\mathbb{T}_{\mathbf{j}}\right/\mathbb{T}_{\mathbf{0}} \rightarrow \left.F^{-}\mathbb{T}_{\mathbf{j}}\right/F^{-}\mathbb{T}_{\mathbf{0}} \rightarrow 0.
\end{equation}
Since $F^{+}\mathbb{T}_{\mathbf{0}} \subsetneq F^{+}\mathbb{T}_{\mathbf{j}}$ are free $\mathbb{I}$-modules of rank one by Lemma \ref{FO}, we have $F^{+}\mathbb{T}_{\mathbf{0}} \subset \mathfrak{m}_{\mathbb{I}}F^{+}\mathbb{T}_{\mathbf{j}}$. Then by the condition ($D_p$-dist), we must have $$\mathrm{Ker}\left(F^{-}\mathbb{T}_{\mathbf{0}} \rightarrow F^{-}\mathbb{T}_{\mathbf{j}} \right)=0.$$Then \eqref{190108} becomes to
\begin{equation}\label{1901092}
0 \rightarrow \left.F^{+}\mathbb{T}_{\mathbf{j}}\right/F^{+}\mathbb{T}_{\mathbf{0}} \rightarrow \left.\mathbb{T}_{\mathbf{j}}\right/\mathbb{T}_{\mathbf{0}} \rightarrow \left.F^{-}\mathbb{T}_{\mathbf{j}}\right/F^{-}\mathbb{T}_{\mathbf{0}} \rightarrow 0.
\end{equation}

Recall that $F^{+}\mathbb{T}_{\mathbf{j}}$ (resp. $F^{-}\mathbb{T}_{\mathbf{j}}$) is an $\mathbb{I}[D_p]$-module of type $\varepsilon_1$ (resp. $\varepsilon_2$). Let us take an element $\mathfrak{p}_i \in \mathcal{N}$ such that $\left(\mathbb{T}_{\mathbf{j}}/\mathbb{T}_{\mathbf{0}} \right)_{\mathfrak{p}_i}\cong\mathbb{T}_{\mathbf{j}, \mathfrak{p}_i}/\mathbb{T}_{\mathbf{0}, \mathfrak{p}_i}\cong\mathbb{I}_{\mathfrak{p}_i}/\mathfrak{p}_i^{j_i}$ with $j_i \neq 0$. By localizing the exact sequence \eqref{1901092} at $\mathfrak{p}_i$, we have that $\left(F^{+}\mathbb{T}_{\mathbf{j}}/F^{+}\mathbb{T}_{\mathbf{0}} \right)_{\mathfrak{p}_i}$ is a type $\varepsilon_1$ $\mathbb{I}_{\mathfrak{p}_i}[D_p]$-submodule of $\left(\mathbb{T}_{\mathbf{j}}/\mathbb{T}_{\mathbf{0}} \right)_{\mathfrak{p}_i}$. However by the condition (Type $\mathbf{1}$), $\mathbb{T}_{\mathbf{j}, \mathfrak{p}_i}/\mathbb{T}_{\mathbf{0}, \mathfrak{p}_i}$ is an $\mathbb{I}_{\mathfrak{p}_i}[G_{\mathbb{Q}}]$-module of type $\mathbf{1}$. This contradicts to the condition ($D_p$-dist).

\end{proof}

\begin{lem}\label{Fil2}
We have the following isomorphism of $\mathbb{I}[G_{\mathbb{Q}}]$-modules: $$\left.\mathbb{T}\left(j_1, \cdots, j_r \right)\right/{\mathbb{T}\left(0, \cdots, 0 \right)}\stackrel{\sim}{\rightarrow}\bigslant{\mathbb{I}}{\displaystyle\prod_{i=1}^{r}\mathfrak{p}_i^{j_i}}\ \left(\mathbf{1} \right).$$
\end{lem}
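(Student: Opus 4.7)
The plan is to reduce the computation of $M := \mathbb{T}(j_1,\ldots,j_r)/\mathbb{T}(0,\ldots,0)$ first to a quotient of rank-one free $\mathbb{I}$-modules (using Lemma \ref{11081}), then to identify the resulting cyclic $\mathbb{I}$-module by localising at each height-one prime, and finally to promote the $\mathbb{I}$-module isomorphism to a $G_{\mathbb{Q}}$-equivariant one via a reflexivity argument.

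First I would feed the equality $F^{+}\mathbb{T}(0,\ldots,0)=F^{+}\mathbb{T}(j_1,\ldots,j_r)$ from Lemma \ref{11081} into the snake lemma applied to the two short exact sequences $0\to F^{+}\mathbb{T}(\bullet)\to\mathbb{T}(\bullet)\to F^{-}\mathbb{T}(\bullet)\to 0$ linked by the inclusion $\mathbb{T}(0,\ldots,0)\hookrightarrow\mathbb{T}(j_1,\ldots,j_r)$: because the induced map on $F^{+}$-parts is an isomorphism, the connecting map vanishes and the snake sequence produces a canonical isomorphism of $\mathbb{I}[D_p]$-modules
\begin{equation*}
M\;\xrightarrow{\sim}\;F^{-}\mathbb{T}(j_1,\ldots,j_r)\big/F^{-}\mathbb{T}(0,\ldots,0).
\end{equation*}
By Lemma \ref{FO} both of the $F^{-}$-modules are free of rank one over $\mathbb{I}$ and sit inside the one-dimensional $\mathbb{K}$-vector space $F^{-}\mathbb{V}_{\mathcal{F}}$; after fixing a $\mathbb{K}$-basis they become principal fractional ideals $a\mathbb{I}\subset b\mathbb{I}$, so $M\cong b\mathbb{I}/a\mathbb{I}\cong\mathbb{I}/\mathfrak{a}$ is cyclic with $\mathfrak{a}=(ab^{-1})$ principal.

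Next I would pin down $\mathfrak{a}$ by localising at each height-one prime of $\mathbb{I}$. By the defining formula \eqref{1901128} of $\mathbb{T}(j_1,\ldots,j_r)$, $M_{\mathfrak{p}}=0$ for $\mathfrak{p}\notin\mathcal{N}$, whereas for $\mathfrak{p}_i\in\mathcal{N}$ the Type $\mathbf{1}$ condition gives $M_{\mathfrak{p}_i}\cong\mathbb{I}_{\mathfrak{p}_i}/\mathfrak{p}_i^{j_i}$. Since $\mathbb{I}\cong\mathcal{O}[[X]]$ is a UFD, a nonzero principal ideal is determined (up to units) by its orders at all height-one primes; hence $\mathfrak{a}=\prod_{i=1}^{r}\mathfrak{p}_i^{j_i}$.

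It remains to check that $G_{\mathbb{Q}}$ acts trivially on $M$, and this is the step I expect to be the main obstacle, because $F^{+}\mathbb{V}_{\mathcal{F}}$ is only $D_p$-stable so the snake-lemma identification above is a priori only $\mathbb{I}[D_p]$-equivariant, not $\mathbb{I}[G_{\mathbb{Q}}]$-equivariant. To supply the missing equivariance I would use reflexivity: both $\mathbb{T}(0,\ldots,0)$ and $\mathbb{T}(j_1,\ldots,j_r)$ are free, hence reflexive, so each equals $\bigcap_{\mathfrak{p}\in P^{1}(\mathbb{I})}\mathbb{T}(\bullet)_{\mathfrak{p}}$ by \cite[Chap.~\RomanNumeralCaps{7}.~\S 4.3, Theorem 3]{Bour}, and a one-line diagram chase then shows that the natural map $M\hookrightarrow\prod_{\mathfrak{p}\in P^{1}(\mathbb{I})}M_{\mathfrak{p}}$ is injective. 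Since the Type $\mathbf{1}$ condition makes $G_{\mathbb{Q}}$ act trivially on each $M_{\mathfrak{p}_i}$, and the embedding $M\hookrightarrow\prod_{\mathfrak{p}}M_{\mathfrak{p}}$ is visibly $G_{\mathbb{Q}}$-equivariant, the action on $M$ itself must be trivial. Combining with the cyclic $\mathbb{I}$-module computation yields the desired isomorphism $M\cong\mathbb{I}/\prod_{i=1}^{r}\mathfrak{p}_i^{j_i}\,(\mathbf{1})$ of $\mathbb{I}[G_{\mathbb{Q}}]$-modules.
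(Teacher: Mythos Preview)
Your argument is correct. The identification of $M$ as a cyclic $\mathbb{I}$-module $\mathbb{I}/\prod_i\mathfrak{p}_i^{j_i}$ via the snake lemma, Lemma \ref{FO}, and localisation is exactly what the paper does. Where you diverge is in establishing the triviality of the $G_{\mathbb{Q}}$-action. The paper first shows that $\mathbb{T}_{\mathbf{0}}/\xi\mathbb{T}_{\mathbf{j}}$ is again cyclic over $\mathbb{I}$ (via a second comparison diagram), so that the short exact sequence $0\to\mathbb{T}_{\mathbf{0}}/\xi\mathbb{T}_{\mathbf{j}}\to\mathbb{T}_{\mathbf{j}}/\xi\mathbb{T}_{\mathbf{j}}\to M\to 0$ exhibits $\mathrm{tr}\,\rho_{\mathcal{F}}\bmod(\xi)$ as a sum of two characters; it then invokes the uniqueness statement of Proposition \ref{21} together with Lemma \ref{3.7} to conclude that $M$ is of type $\mathbf{1}$ or of type $\chi\kappa_{\mathrm{cyc}}\kappa_{\mathrm{cyc}}^{\mathrm{univ}}$, and finally uses ($D_p$-dist) and the type-$\varepsilon_2$ structure coming from $F^{-}$ to select the former. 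Your route is shorter: you note that the reflexivity of $\mathbb{T}_{\mathbf{0}}$ makes $M\hookrightarrow\prod_{\mathfrak{p}\in P^1(\mathbb{I})}M_{\mathfrak{p}}$ injective and $G_{\mathbb{Q}}$-equivariant, and the Type $\mathbf{1}$ condition already forces triviality of the action on every nonzero localisation. This bypasses the second diagram, Proposition \ref{21}, Lemma \ref{3.7}, and the appeal to ($D_p$-dist) in this step; conversely, the paper's approach has the advantage of making explicit that $M$ could a priori only be of one of the two global character types, which is a structural fact one might want elsewhere.
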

\begin{proof}
Let us keep the notation which is used in the proof of Lemma \ref{11081}. The commutative diagram \eqref{Keycom} induces the following isomorphism of $\mathbb{I}[D_p]$-modules
\begin{equation}\label{1901087}
\mathbb{T}_{\mathbf{j}}/\mathbb{T}_{\mathbf{0}} \stackrel{\sim}{\rightarrow} F^{-}\mathbb{T}_{\mathbf{j}}/F^{-}\mathbb{T}_{\mathbf{0}}
\end{equation}
by Lemma \ref{11081} and its proof. Since the $\mathbb{I}$-modules $F^{-}\mathbb{T}_{\mathbf{0}}$ and $F^{-}\mathbb{T}_{\mathbf{j}}$ are free of rank one by Lemma \ref{FO}, under our assumption $\left(j_1, \cdots, j_r \right) \neq \left(0, \cdots, 0 \right)$, there exists an element $\xi \in \mathfrak{m}_{\mathbb{I}}$ such that $\mathbb{T}_{\mathbf{j}}/\mathbb{T}_{\mathbf{0}} \stackrel{\sim}{\rightarrow} \mathbb{I}/\left(\xi \right)$ as $\mathbb{I}$-modules. By our construction, we have $\mathbb{T}_{\mathbf{0}, \mathfrak{p}}=\mathbb{T}_{\mathbf{j}, \mathfrak{p}}$ for any $\mathfrak{p} \not\in \mathcal{N}$ and $\mathbb{T}_{\mathbf{j}, \mathfrak{p}_i}/\mathbb{T}_{\mathbf{0}, \mathfrak{p}_i}\cong\mathbb{I}_{\mathfrak{p}_i}/\mathfrak{p}_i^{j_i}$ for any $\mathfrak{p}_i \in \mathcal{N}$. Thus we have
\begin{equation}\label{1901086}
\left(\xi \right)=\mathrm{char}_{\mathbb{I}}\left(\mathbb{T}_{\mathbf{j}}/\mathbb{T}_{\mathbf{0}} \right)=\displaystyle\prod_{\mathfrak{p} \in P^1\left(\mathbb{I} \right)}\mathfrak{p}^{\mathrm{length}_{\mathbb{I}_{\mathfrak{p}}}\left(\mathbb{T}_{\mathbf{j}}/\mathbb{T}_{\mathbf{0}} \right)_{\mathfrak{p}}}=\displaystyle\prod_{i=1}^{r}\mathfrak{p}_i^{j_i}.
\end{equation}

By the following isomorphism of $\mathbb{I}$-modules $$\mathbb{T}_{\mathbf{j}}/\mathbb{T}_{\mathbf{0}} \stackrel{\sim}{\rightarrow} \left.F^{-}\mathbb{T}_{\mathbf{j}} \right/F^{-}\mathbb{T}_{\mathbf{0}} \stackrel{\sim}{\rightarrow} \mathbb{I}/\left(\xi \right),$$ we have $\xi F^{-}\mathbb{T}_{\mathbf{j}}=F^{-}\mathbb{T}_{\mathbf{0}}$ and $\xi \mathbb{T}_{\mathbf{j}} \subset \mathbb{T}_{\mathbf{0}}.$ Thus by the following commutative diagram
\begin{equation*}
 \xymatrix{
    0 \ar[r] & \xi F^{+}\mathbb{T}_{\mathbf{j}} \ar[r] \ar@{^{(}-_>}[d] & \xi\mathbb{T}_{\mathbf{j}} \ar[r] \ar@{^{(}-_>}[d] & \xi F^{-}\mathbb{T}_{\mathbf{j}} \ar[r] \ar@{=}[d] & 0 \\
    0 \ar[r] & F^{+}\mathbb{T}_{\mathbf{0}} \ar[r] & \mathbb{T}_{\mathbf{0}} \ar[r] & F^{-}\mathbb{T}_{\mathbf{0}} \ar[r] & 0,
  }
\end{equation*}
we have that $\left.\mathbb{T}_{\mathbf{0}}\right/\xi\mathbb{T}_{\mathbf{j}}$ is isomorphic to $\left.F^{+}\mathbb{T}_{\mathbf{0}}\right/\xi F^{+}\mathbb{T}_{\mathbf{j}}$. Thus $\left.\mathbb{T}_{\mathbf{0}}\right/\xi\mathbb{T}_{\mathbf{j}}$ is isomorphic to $\left.\mathbb{I}\right/{\left(\xi\right)}$ as an $\mathbb{I}$-module by Lemma \ref{11081}. Then the following exact sequence $$0 \rightarrow \mathbb{T}_{\mathbf{0}}/\xi\mathbb{T}_{\mathbf{j}} \rightarrow \mathbb{T}_{\mathbf{j}}/\xi\mathbb{T}_{\mathbf{j}} \rightarrow \mathbb{T}_{\mathbf{j}}/\mathbb{T}_{\mathbf{0}} \rightarrow 0$$ implies that $\mathrm{tr}_{\mathcal{F}}\,\mathrm{mod}\,\left(\xi \right)$ is the sum of two characters with values in $\left(\mathbb{I}/\left(\xi \right) \right)^{\times}$. Then we have that $\mathbb{T}_{\mathbf{j}}/\mathbb{T}_{\mathbf{0}}$ is an $\mathbb{I}[G_{\mathbb{Q}}]$-module of either type $\chi\kappa_{\mathrm{cyc}}\kappa_{\mathrm{cyc}}^{\mathrm{univ}}$ or type $\mathbf{1}$ by combining Proposition \ref{21} with Lemma \ref{3.7}. Since $\mathbb{T}_{\mathbf{j}}/\mathbb{T}_{\mathbf{0}}$ is a type $\varepsilon_2$ $\mathbb{I}[D_p]$-module by \eqref{1901087}, we must have 
\begin{equation}\label{190109}
\mathbb{T}_{\mathbf{j}}/\mathbb{T}_{\mathbf{0}} \stackrel{\sim}{\rightarrow} \mathbb{I}/\left(\xi \right)\,\left(\mathbf{1} \right)
\end{equation}
under ($D_p$-dist). Thus, we complete the proof by combining \eqref{190109} with \eqref{1901086}.
\end{proof}

Let us return to the proof of Theorem \ref{yandongmain}.
\begin{proof}[Proof of Theorem \ref{yandongmain}]
When $\mathbb{J}^{**}=\mathbb{I}$, $\mathscr{L}^{\mathrm{fr}}\left(\rho_{\mathcal{F}} \right)$ consists of a unique element by Lemma \ref{1108}, hence all statements in Theorem \ref{yandongmain} follows. Thus we may assume $\mathbb{J}^{**} \subsetneq \mathbb{I}$. Let $$\mathscr{T}=\set{\mathbb{T}\left(j_1, \cdots, j_r \right) | i=1, \cdots, r, j_i=0, \cdots, n_i},$$ where $\mathbb{T}\left(j_1, \cdots, j_r \right)$ is defined as \eqref{04281}. Then by Lemma \ref{1108}, we have the following equality
\begin{equation*}
\sharp\mathscr{C}^{\mathrm{fr}}\left(\rho_{\mathcal{F}} \right)=\sharp\mathscr{L}^{\mathrm{fr}}\left(\rho_{\mathcal{F}} \right)=\displaystyle\prod_{\mathfrak{p} \in P^1\left(\mathbb{I} \right)}\left(\mathrm{ord}_{\mathfrak{p}}\mathbb{J}_{\mathfrak{p}} +1 \right).
\end{equation*}
We have $\left(\mathcal{L}_p\left(\chi; \gamma^{\prime} \right) \right) \subset \mathbb{J}$ by \cite[Proposition 3.8]{Y}, thus the first assertion of Theorem \ref{yandongmain} follows. Let 
\begin{equation}\label{12021}
\mathbb{T}^{\mathrm{min}}=\mathbb{T}\left(0, \cdots, 0\right).
\end{equation}
Then the second assertion of Theorem \ref{yandongmain} follows by Lemma \ref{Fil2}. 

Now we prove the third assertion. Let $\mathbb{T}$ be a $G_{\mathbb{Q}}$-stable lattice of $\mathbb{V}_{\mathcal{F}}$ and $\mathcal{T}^{\left(i \right)}=\mathbb{T}\hat{\otimes}_{\mathbb{Z}_p}\mathbb{Z}_p[[\Gamma]]\left(\tilde{\kappa}^{-1} \right)\otimes\omega^i$. We compute the quotient $\dfrac{(L_p^{\mathrm{alg}}(  \mathcal{T}^{(i )} )    )}{(L_p^{\mathrm{alg}}(  \mathcal{T}^{\mathrm{min}, (i )} )    )}$. By Lemma \ref{43}, it is enough to assume that $\mathbb{T}$ is a free lattice. Furthermore by the second assertion of Theorem \ref{yandongmain}, it is enough to assume $\mathbb{T} \in \mathscr{T}$. Then we have $F^{+}\mathcal{T}^{\left(i \right)}=F^{+}\mathcal{T}^{\mathrm{min}, \left(i \right)}$ and there exists a factor $\mathfrak{A}$ of $\mathcal{J}^{**}$ such that 
\begin{equation}\label{10251}
\left.\mathcal{T}^{\left(i \right)}\right/\mathcal{T}^{\mathrm{min}, \left(i \right)} \stackrel{\sim}{\rightarrow} \left.\mathcal{R}\right/\mathfrak{A}\,\left(\omega^i\tilde{\kappa}^{-1} \right)
\end{equation}
by the second assertion of Theorem \ref{yandongmain}. Since $\mathbb{Q}_{\infty}$ is totally real, $\left.\tilde{\kappa}\right|_{G_{\mathbb{R}}}$ is a trivial character. Thus by applying Ochiai's theorem (Theorem \ref{O}) to the $\mathcal{T}^{\left(i \right)}$ and $\mathcal{T}^{\mathrm{min}, \left(i \right)}$, we have 
\begin{equation*}
\dfrac{(L_p^{\mathrm{alg}}(  \mathcal{T}^{(i )} )    )}{(L_p^{\mathrm{alg}}(  \mathcal{T}^{\mathrm{min}, (i )} )    )}=\begin{cases}
1& \left(i: \text{odd} \right) \\
\mathfrak{A}& \left(i: \text{even} \right).
\end{cases}
\end{equation*}
Since the correspondence between $\left.\mathfrak{A}\right|\mathcal{J}^{**}$ and $\mathbb{T} \in \mathscr{T}$ is one-to-one by the second assertion of Theorem \ref{yandongmain}, this completes the proof of Theorem \ref{yandongmain}.

\end{proof}

By Theorem \ref{yandongmain}-(2), there exists an $\mathbb{I}$-free lattice $\mathbb{T}^{\mathrm{max}} \in \mathscr{T}$ such that 
\begin{equation}\label{12011}
\left.\mathbb{T}^{\mathrm{max}}\right/{\mathbb{T}^{\mathrm{min}}} \stackrel{\sim}{\rightarrow} \left.\mathbb{I}\right/{\mathbb{J}^{**}}\,\left(\mathbf{1}\right).
\end{equation}
By the proof of Theorem \ref{yandongmain}, we have that the algebraic $p$-adic $L$-function $L_p^{\mathrm{alg}}\left(\mathcal{T}^{\mathrm{max}, \left(i \right)} \right)$ for the lattice $\mathbb{T}^{\mathrm{max}}$ is maximal under divisibility among the set of $L_p^{\mathrm{alg}}\left(\mathcal{T}^{\left(i\right)} \right)$ for all $G_{\mathbb{Q}}$-stable lattices of $\mathbb{V}_{\mathcal{F}}$. Now we give a geometric characterization of the lattices $\mathbb{T}^{\mathrm{min}}$ and $\mathbb{T}^{\mathrm{max}}$. Let $\mathbf{H}^{\mathrm{ord}}:=\mathbf{H}^{\mathrm{ord}}\left(N, \mathbb{Z}_p[\chi]\right)$ be Hida's ordinary Hecke algebra over $\Lambda_{\chi}$ as in \cite{H1} and $\mathbf{h}^{\mathrm{ord}}:=\mathbf{h}^{\mathrm{ord}}\left(N, \mathbb{Z}_p[\chi]\right)$ the quotient of $\mathbf{H}^{\mathrm{ord}}$ corresponding to cusp forms. Let $\mathfrak{M}:=\mathfrak{M}\left(\chi\omega^{-1}, \mathbf{1} \right)$ be the Eisenstein maximal ideal of $\mathbf{H}^{\mathrm{ord}}$ as in \cite[\S 1.2, (1.2.9)]{MO05}. Note that $\mathbb{I}$ is an extension of a quotient of $\mathbf{h}^{\mathrm{ord}}_{\mathfrak{M}}$. By combining Theorem \ref{yandongmain} with a result of Ohta \cite[\S 3.4]{MO05}, we have the following corollary.
\begin{cor}\label{canonical}
Let us keep the assumption of Theorem \ref{yandongmain}. Assume further that rings $\mathbf{H}^{\mathrm{ord}}_{\mathfrak{M}}$ and $\mathbf{h}^{\mathrm{ord}}_{\mathfrak{M}}$ are Gorenstein. Then we have the following equalities
\begin{equation*}
\begin{cases}
\mathbb{T}^{\mathrm{min}}=\mathrm{Hom}_{\mathbb{I}}\left(\plim[r\geq1]H_{\text{\'et}}^1\left(X_1\left(Np^r\right)\otimes_{\mathbb{Q}}\overline{\mathbb{Q}}, \mathbb{Z}_p[\chi]    \right)_{\mathfrak{M}}^{\mathrm{ord}}\otimes_{\mathbf{H}_{\mathfrak{M}}^{\mathrm{ord}}}\mathbb{I}, \mathbb{I}\right) \\
\mathbb{T}^{\mathrm{max}}=\mathrm{Hom}_{\mathbb{I}}\left(\plim[r\geq1]H_{\text{\'et}}^1\left(Y_1\left(Np^r\right)\otimes_{\mathbb{Q}}\overline{\mathbb{Q}}, \mathbb{Z}_p[\chi]    \right)_{\mathfrak{M}}^{\mathrm{ord}}\otimes_{\mathbf{H}_{\mathfrak{M}}^{\mathrm{ord}}}\mathbb{I}, \mathbb{I}\right).
\end{cases}
\end{equation*}
up to homothety.
\end{cor}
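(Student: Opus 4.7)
The plan is to deduce Corollary \ref{canonical} from Theorem \ref{yandongmain}(2) together with Ohta's description (\cite[\S 3.4]{MO05}) of the comparison between the ordinary $\mathfrak{M}$-localized \'etale cohomologies of $X_1(Np^r)$ and $Y_1(Np^r)$. Since Theorem \ref{yandongmain}(2) classifies the elements of $\mathscr{T}$ by the $\mathbb{I}$-ideal $\mathfrak{a}$ appearing in $\mathbb{T}/\mathbb{T}^{\mathrm{min}} \stackrel{\sim}{\rightarrow} \mathbb{I}/\mathfrak{a}(\mathbf{1})$, it suffices to exhibit a short exact sequence of $G_{\mathbb{Q}}$-stable $\mathbb{I}$-free lattices identifying the two Hom-lattices in the statement with the two extremes $\mathfrak{a}=(1)$ and $\mathfrak{a}=\mathbb{J}^{**}$.

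First, under the Gorenstein hypothesis on $\mathbf{H}^{\mathrm{ord}}_{\mathfrak{M}}$ and $\mathbf{h}^{\mathrm{ord}}_{\mathfrak{M}}$, Ohta's result provides an exact sequence
\begin{equation*}
0 \to \plim[r] H_{\text{\'et}}^1\bigl(X_1(Np^r)_{\overline{\mathbb{Q}}}, \mathbb{Z}_p[\chi]\bigr)^{\mathrm{ord}}_{\mathfrak{M}} \to \plim[r] H_{\text{\'et}}^1\bigl(Y_1(Np^r)_{\overline{\mathbb{Q}}}, \mathbb{Z}_p[\chi]\bigr)^{\mathrm{ord}}_{\mathfrak{M}} \to E \to 0,
\end{equation*}
in which the cokernel $E$ is a cyclic $\mathbf{H}^{\mathrm{ord}}_{\mathfrak{M}}$-module whose annihilator is the Eisenstein ideal and whose $G_{\mathbb{Q}}$-action is pinned down by the action on the cuspidal divisor (a Tate twist of the trivial representation). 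Tensoring over $\mathbf{H}^{\mathrm{ord}}_{\mathfrak{M}}$ with $\mathbb{I}$, the Gorenstein/free hypothesis forces the required $\mathrm{Tor}$ to vanish; the Eisenstein ideal pushes forward to $\mathbb{J}$ via the identification implicit in Lemma \ref{3.7} (trace relations $a(l,\mathcal{F}) \equiv 1+\chi(l)\langle l\rangle {\kappa'}^{-1}(\langle l\rangle)$ and $a(p,\mathcal{F})\equiv 1$). Hence we obtain an exact sequence of finitely generated $\mathbb{I}$-modules
\begin{equation*}
0 \to T_X \to T_Y \to E\otimes_{\mathbf{H}^{\mathrm{ord}}_{\mathfrak{M}}}\mathbb{I} \to 0
\end{equation*}
with $T_X,T_Y$ free of rank two over $\mathbb{I}$ and $E\otimes_{\mathbf{H}^{\mathrm{ord}}_{\mathfrak{M}}}\mathbb{I}$ a torsion cyclic module whose reflexive hull has characteristic ideal equal to $\mathbb{J}^{**}$.

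Second, apply $\mathrm{Hom}_{\mathbb{I}}(-,\mathbb{I})$. Since $\mathbb{I}\cong\mathcal{O}[[X]]$ is regular of dimension two and the quotient is $\mathbb{I}$-torsion, the resulting long exact sequence reduces to a short exact sequence
\begin{equation*}
0\to \mathrm{Hom}_{\mathbb{I}}(T_Y,\mathbb{I})\to \mathrm{Hom}_{\mathbb{I}}(T_X,\mathbb{I})\to Q \to 0
\end{equation*}
with $Q$ a torsion $\mathbb{I}$-module of characteristic ideal $\mathbb{J}^{**}$; both Hom-modules are $G_{\mathbb{Q}}$-stable $\mathbb{I}$-free lattices in $\mathbb{V}_{\mathcal{F}}$, and the Galois action on $Q$ becomes trivial after the duality (the Tate twist in Ohta's description is absorbed by the Poincar\'e duality pairing and the convention that $\rho_{\mathcal{F}}$ is the covariant action). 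Thus the pair $(\mathrm{Hom}_{\mathbb{I}}(T_Y,\mathbb{I}),\,\mathrm{Hom}_{\mathbb{I}}(T_X,\mathbb{I}))$ fits Theorem \ref{yandongmain}(2) with extremal ideal $\mathfrak{a}=\mathbb{J}^{**}$. By the bijection $\mathscr{T}\stackrel{\sim}{\rightarrow} D(\mathbb{J}^{**})$ these lattices must coincide with $\mathbb{T}^{\mathrm{min}}$ and $\mathbb{T}^{\mathrm{max}}$ up to homothety, which is the content of the Corollary.

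The main obstacle will be the careful bookkeeping of conventions and twists in Step 2: translating Ohta's description (which involves the Tate-twisted cusp contribution and the contravariant/covariant dichotomy for the Hecke action) into a statement about the quotient $Q$ having \emph{trivial} Galois action after dualizing, as required to match the type-$\mathbf{1}$ quotient $\mathbb{I}/\mathfrak{a}(\mathbf{1})$ appearing in Theorem \ref{yandongmain}(2). A secondary point is to confirm that $Q$ is cyclic (not just of characteristic ideal $\mathbb{J}^{**}$), which is needed to make the bijection argument apply; this follows from the freeness of the Hom-modules and reflexivity, combined with Ohta's explicit description of $E$ as cyclic.
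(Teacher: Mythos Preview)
Your strategy is genuinely different from the paper's, and in principle a direct $X_1$--$Y_1$ comparison could work, but as written it has a real gap in exactly the place you flag as ``the main obstacle.''

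The paper does \emph{not} compare the $X_1$ and $Y_1$ cohomologies directly. Instead it invokes Ohta's explicit description of $\mathbb{T}_{\mathcal{F}}/\mathbb{J}\mathbb{T}_{\mathcal{F}}$ (from \cite[Corollary~3.4.13]{MO05}), namely the exact sequence
\[
0 \to \mathbb{I}/\mathbb{J}\,(\mathbf{1}) \to \mathbb{T}_{\mathcal{F}}/\mathbb{J}\mathbb{T}_{\mathcal{F}} \to \mathbb{I}/\mathbb{J}\,(\chi\kappa_{\mathrm{cyc}}\kappa_{\mathrm{cyc}}^{\mathrm{univ}}) \to 0,
\]
and then argues by \emph{contradiction}: if $\mathbb{T}_{\mathcal{F}}$ were not homothetic to $\mathbb{T}^{\mathrm{min}}$, one builds from this sequence an auxiliary free lattice $\mathbb{J}^{-1}\mathbb{T}_{\mathcal{F}}'$ sitting above $\mathbb{T}_{\mathcal{F}}$ with type-$\mathbf{1}$ quotient $\mathbb{I}/\mathbb{J}$, and then produces strictly more than $\prod_{\mathfrak p}(\mathrm{ord}_{\mathfrak p}\mathbb{J}+1)$ pairwise non-homothetic free lattices, contradicting Theorem~\ref{yandongmain}(1). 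This counting argument sidesteps all of the twist bookkeeping you are worried about.

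Your direct approach breaks at the claim that $Q=\mathrm{Hom}_{\mathbb I}(T_X,\mathbb I)/\mathrm{Hom}_{\mathbb I}(T_Y,\mathbb I)$ is of type~$\mathbf{1}$. From the Gysin sequence one has $T_X\hookrightarrow T_Y$ with cokernel inside $H^0(\text{cusps})(-1)$, so dualizing gives $\mathrm{Hom}(T_Y,\mathbb I)\subset\mathrm{Hom}(T_X,\mathbb I)=\mathbb{T}_{\mathcal{F}}$; the $X_1$-Hom is thus the \emph{larger} lattice in your sequence. But the Corollary asserts $\mathbb{T}_{\mathcal{F}}=\mathbb{T}^{\mathrm{min}}$, the \emph{smaller} one. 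These are consistent only if $Q$ is of type $\chi\kappa_{\mathrm{cyc}}\kappa_{\mathrm{cyc}}^{\mathrm{univ}}$ (so that $\mathrm{Hom}(T_Y,\mathbb I)$ is homothetic to $\mathbb{J}\,\mathbb{T}^{\mathrm{max}}$), not of type~$\mathbf{1}$ as you claim. Indeed, the Ohta sequence above shows that the rank-one \emph{quotient} of $\mathbb{T}_{\mathcal{F}}$ modulo $\mathbb J$ carries the character $\chi\kappa_{\mathrm{cyc}}\kappa_{\mathrm{cyc}}^{\mathrm{univ}}$, which is incompatible with your assertion. With the corrected character on $Q$, your final step also needs reworking: the bijection $\mathscr{T}\to D(\mathbb{J}^{**})$ in Theorem~\ref{yandongmain}(2) is indexed by type-$\mathbf{1}$ quotients of $\mathbb{T}^{\mathrm{min}}$, so a type-$\chi\kappa_{\mathrm{cyc}}\kappa_{\mathrm{cyc}}^{\mathrm{univ}}$ quotient does not plug in directly --- you must first pass to the homothetic chain $\mathbb{J}^{-1}\mathrm{Hom}(T_Y,\mathbb I)\supset\mathbb{T}_{\mathcal{F}}$ and then argue (e.g.\ via the $F^{\pm}$-filtration as in Lemmas~\ref{11081}--\ref{Fil2}) that this forces the extremal positions.
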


\begin{proof}
Since the proof of the assertions on $\mathbb{T}^{\mathrm{min}}$ and $\mathbb{T}^{\mathrm{max}}$ are done in the same way, we only prove the assertion on $\mathbb{T}^{\mathrm{min}}$. Let $$\mathbb{T}_{\mathcal{F}}:=\mathrm{Hom}_{\mathbb{I}}\left(\plim[r\geq1]H_{\text{\'et}}^1\left(X_1\left(Np^r\right)\otimes_{\mathbb{Q}}\overline{\mathbb{Q}}, \mathbb{Z}_p[\chi]    \right)_{\mathfrak{M}}^{\mathrm{ord}}\otimes_{\mathbf{H}_{\mathfrak{M}}^{\mathrm{ord}}}\mathbb{I}, \mathbb{I}\right).$$ By Ohta \cite[(3.2.5) and (3.2.6)]{MO00}, we have that $\mathbb{T}_{\mathcal{F}}$ is a $G_{\mathbb{Q}}$-stable lattice of $\mathbb{V}_{\mathcal{F}}$ (note that the normalization of the above $\mathbb{T}_{\mathcal{F}}$ is dual to that of Ohta's paper \cite{MO00}). We have that $\mathbb{T}_{\mathcal{F}}$ is a dual lattice of $\plim[r\geq1]H_{\text{\'et}}^1\left(X_1\left(Np^r\right)\otimes_{\mathbb{Q}}\overline{\mathbb{Q}}, \mathbb{Z}_p[\chi]    \right)_{\mathfrak{M}}^{\mathrm{ord}}\otimes_{\mathbf{H}_{\mathfrak{M}}^{\mathrm{ord}}}\mathbb{I}\otimes_{\mathbb{I}}\mathbb{K}$. Hence $\mathbb{T}_{\mathcal{F}}$ is a reflexive. Thus, under the assumption that $\mathbb{I}$ is regular, $\mathbb{T}_{\mathcal{F}}$ is free over $\mathbb{I}$.

Let $I:=I\left(\chi\omega^{-1}, \mathbf{1} \right)$ denotes the Eisenstein ideal of $\mathbf{H}^{\mathrm{ord}}_{\mathfrak{M}}$ (cf. \cite[\S 1.2, (1.2.9)]{MO05}). Then by \cite[Corollary 4.1.12]{MO06}, $\mathbb{J}$ is the image of $I$ under the homomorphism $\mathbf{H}^{\mathrm{ord}}_{\mathfrak{M}} \rightarrow \mathbb{I}$ by the duality between $\mathbb{I}$-adic forms and their Hecke algebras (cf. \cite[Corollary 1.5.4]{MO05}). Thus under the assumption that $\mathbf{H}^{\mathrm{ord}}_{\mathfrak{M}}$ and $\mathbf{h}^{\mathrm{ord}}_{\mathfrak{M}}$ are Gorenstein, we have that $\mathbb{J}$ is principal by \cite[Theorem 3.3.8]{MO05}. 

In order to prove Corollary \ref{canonical} by contradiction, we assume that $\mathbb{T}_{\mathcal{F}}$ and $\mathbb{T}^{\mathrm{min}}$ are not homothetic. Then by Theorem \ref{yandongmain}-(2), there exist an element $x \in \mathbb{K}^{\times}$ and a proper ideal $\left.\mathfrak{a}\right|\mathbb{J}$ such that $x^{-1}\mathbb{T}_{\mathcal{F}}\in \mathscr{T}$ and 
\begin{equation}\label{12251}
\left.x^{-1}\mathbb{T}_{\mathcal{F}}\right/{\mathbb{T}^{\mathrm{min}}} \stackrel{\sim}{\rightarrow} \left.\mathbb{I}\right/{\mathfrak{a}}\,\left(\mathbf{1} \right).
\end{equation}
Hence $\left.\mathbb{T}_{\mathcal{F}}\right/{x\mathbb{T}^{\mathrm{min}}} \stackrel{\sim}{\rightarrow} \left.\mathbb{I}\right/{\mathfrak{a}}\,\left(\mathbf{1} \right).$ Let us consider the following commutative diagram
\begin{equation*}
 \xymatrix{
   0 \ar[r]   & F^{+}\left(x\mathbb{T}^{\mathrm{min}}\right) \ar[r] \ar@{^{(}-_>}[d] & x\mathbb{T}^{\mathrm{min}} \ar[r] \ar@{^{(}-_>}[d] & F^{-}\left(x\mathbb{T}^{\mathrm{min}}\right) \ar[r] \ar[d] & 0 \\
    0 \ar[r] & F^{+}\mathbb{T}_{\mathcal{F}} \ar[r] & \mathbb{T}_{\mathcal{F}} \ar[r] & F^{-}\mathbb{T}_{\mathcal{F}} \ar[r] & 0.
  }
\end{equation*}
By combining the assumption ($D_p$-dst) and the snake lemma, we have
\begin{equation*}
\mathrm{Ker}\left(F^{-}\left(x\mathbb{T}^{\mathrm{min}}\right) \rightarrow F^{-}\mathbb{T}_{\mathcal{F}}   \right)=0
\end{equation*}
and the following exact sequence
\begin{equation}
0 \rightarrow \left.F^{+}\mathbb{T}_{\mathcal{F}}\right/{F^{+}\left(x\mathbb{T}^{\mathrm{min}}\right)} \rightarrow \left.\mathbb{T}_{\mathcal{F}}\right/{x\mathbb{T}^{\mathrm{min}}} \rightarrow \left.F^{-}\mathbb{T}_{\mathcal{F}}\right/{F^{-}\left(x\mathbb{T}^{\mathrm{min}}\right)} \rightarrow 0.
\end{equation}
Then by combining ($D_p$-dist) and \eqref{12251}, we have $F^{+}\mathbb{T}_{\mathcal{F}}=F^{+}\left(x\mathbb{T}^{\mathrm{min}}\right)$ and 
\begin{equation}\label{12252}
\left.\mathbb{T}_{\mathcal{F}}\right/{x\mathbb{T}^{\mathrm{min}}} \stackrel{\sim}{\rightarrow} \left.F^{-}\mathbb{T}_{\mathcal{F}}\right/{F^{-}\left(x\mathbb{T}^{\mathrm{min}}\right)}\stackrel{\sim}{\rightarrow} \left.\mathbb{I}\right/{\mathfrak{a}}\,\left(\mathbf{1} \right).
\end{equation}

Since $\mathbf{H}^{\mathrm{ord}}_{\mathfrak{M}}$ and $\mathbf{h}^{\mathrm{ord}}_{\mathfrak{M}}$ are Gorenstein, by the proof of \cite[Corollary 3.4.13]{MO05}, we have the following exact sequence 
\begin{equation*}
0 \rightarrow \left.\mathbb{I}\right/\mathbb{J}\,\left(\mathbf{1} \right) \rightarrow \left.\mathbb{T}_{\mathcal{F}}\right/\mathbb{J}\mathbb{T}_{\mathcal{F}} \rightarrow \left.\mathbb{I}\right/\mathbb{J}\,\left(\chi\kappa_{\mathrm{cyc}}\kappa_{\mathrm{cyc}}^{\mathrm{univ}} \right) \rightarrow 0.
\end{equation*}
Let $$\mathbb{T}_{\mathcal{F}}^{\prime}:=\mathrm{Ker}\left( \mathbb{T}_{\mathcal{F}} \twoheadrightarrow \left.\mathbb{T}_{\mathcal{F}}\right/\mathbb{J}\mathbb{T}_{\mathcal{F}} \twoheadrightarrow \left.\mathbb{I}\right/\mathbb{J}\,\left(\chi\kappa_{\mathrm{cyc}}\kappa_{\mathrm{cyc}}^{\mathrm{univ}} \right)   \right).$$Then we have 
\begin{equation}\label{2001021}
\begin{cases}
F^{+}\mathbb{T}_{\mathcal{F}}^{\prime}=\mathbb{J}F^{+}\mathbb{T}_{\mathcal{F}} \\
F^{-}\mathbb{T}_{\mathcal{F}}^{\prime}=F^{-}\mathbb{T}_{\mathcal{F}}.
\end{cases}
\end{equation}
by the same arguments as the proof of Lemma \ref{11081} and Lemma \ref{Fil2}. Since $F^{+}\mathbb{T}_{\mathcal{F}}$, $F^{-}\mathbb{T}_{\mathcal{F}}$ and $\mathbb{J}$ are free $\mathbb{I}$-modules of rank one, so are the $\mathbb{I}$-modules $F^{+}\mathbb{T}_{\mathcal{F}}^{\prime}$ and $F^{-}\mathbb{T}_{\mathcal{F}}^{\prime}$. Thus $\mathbb{T}_{\mathcal{F}}^{\prime}$ is an $\mathbb{I}$-free lattice. By \eqref{2001021}, we have $F^{+}\left(\mathbb{J}^{-1}\mathbb{T}_{\mathcal{F}}^{\prime} \right)=F^{+}\mathbb{T}_{\mathcal{F}}$ and 
\begin{equation}\label{12253}
\left.\mathbb{J}^{-1}\mathbb{T}_{\mathcal{F}}^{\prime}\right/{\mathbb{T}_{\mathcal{F}}} \stackrel{\sim}{\rightarrow} \left.F^{-}\left(\mathbb{J}^{-1}\mathbb{T}_{\mathcal{F}}^{\prime}\right)\right/{F^{-}\mathbb{T}_{\mathcal{F}}} \stackrel{\sim}{\rightarrow} \left.\mathbb{I}\right/\mathbb{J}\,\left(\mathbf{1} \right).
\end{equation}
Thus $x\mathbb{T}^{\mathrm{min}} \subset \mathbb{J}^{-1}\mathbb{T}_{\mathcal{F}}^{\prime}$ and $F^{+}\left( x\mathbb{T}^{\mathrm{min}}   \right)=F^{+}\left( \mathbb{J}^{-1}\mathbb{T}_{\mathcal{F}}^{\prime} \right)$. Let us consider the following commutative diagram
\begin{equation*}
 \xymatrix{
   0 \ar[r]   & F^{+}\left(x\mathbb{T}^{\mathrm{min}}\right) \ar[r] \ar@{=}[d] & x\mathbb{T}^{\mathrm{min}} \ar[r] \ar@{^{(}-_>}[d] & F^{-}\left(x\mathbb{T}^{\mathrm{min}}\right)  \ar[r] \ar[d] & 0 \\
    0 \ar[r] & F^{+}\left(\mathbb{J}^{-1}\mathbb{T}_{\mathcal{F}}^{\prime}\right) \ar[r] & \mathbb{J}^{-1}\mathbb{T}_{\mathcal{F}}^{\prime} \ar[r] & F^{-}\left(\mathbb{J}^{-1}\mathbb{T}_{\mathcal{F}}^{\prime}\right) \ar[r] & 0.
  }
\end{equation*}
By combining \eqref{12252} and \eqref{12253}, we have 
\begin{equation*}
\left.\mathbb{J}^{-1}\mathbb{T}_{\mathcal{F}}^{\prime}\right/{x\mathbb{T}^{\mathrm{min}}} \stackrel{\sim}{\rightarrow} \left.F^{-}\left(\mathbb{J}^{-1}\mathbb{T}_{\mathcal{F}}^{\prime}\right)\right/{F^{-}\left(x\mathbb{T}^{\mathrm{min}}\right)} \stackrel{\sim}{\rightarrow} \left.\mathbb{I}\right/\mathfrak{a}\mathbb{J}\,\left(\mathbf{1} \right).
\end{equation*}
For any ideal $\mathfrak{b}$ of $\mathbb{I}$ dividing $\mathfrak{a}\mathbb{J}$, let $$\mathbb{T}\left(\mathfrak{b}\right):=\mathrm{Ker}\left( \mathbb{J}^{-1}\mathbb{T}_{\mathcal{F}}^{\prime} \twoheadrightarrow \left.\mathbb{J}^{-1}\mathbb{T}_{\mathcal{F}}^{\prime}\right/{x\mathbb{T}^{\mathrm{min}}} \stackrel{\sim}{\rightarrow} \left.\mathbb{I}\right/\mathfrak{a}\mathbb{J}\,\left(\mathbf{1} \right) \twoheadrightarrow \left.\mathbb{I}\right/{\mathfrak{b}}\,\left(\mathbf{1}\right)   \right).$$
Then $\mathbb{T}\left(\mathfrak{b}\right)$ is a $G_{\mathbb{Q}}$-stable $\mathbb{I}$-free lattice. This implies that the number of $G_{\mathbb{Q}}$-stable $\mathbb{I}$-free lattices up to homothety is greater than or equal to $\displaystyle\prod_{\mathfrak{p} \in P^1\left(\mathbb{I} \right)}\left(\mathrm{ord}_{\mathfrak{p}}\mathfrak{a}\mathbb{J} +1 \right)$. Since $\mathfrak{a} \subsetneq \mathbb{I}$, this contradicts to Theorem \ref{yandongmain}-(1). Thus, $\mathbb{T}_{\mathcal{F}}$ and $\mathbb{T}^{\mathrm{min}}$ must be homothetic and we complete the proof.
\end{proof}

Now let us assume ($\Lambda$). Note that under the assumption ($\Lambda$), the ideal $\mathbb{J}$ is principal and is generated by the Kubota-Leopoldt $p$-adic $L$ function $\mathcal{L}_p\left(\chi; \gamma^{\prime} \right)$ (see \cite[Corollary 3.9]{Y} for example). Thus under the assumption ($\Lambda$), we have that the ideal $I\left(\omega^{k-2}, \mathbf{1} \right)$ is principal. This implies that $\mathbf{H}^{\mathrm{ord}}_{\mathfrak{M}}$ and $\mathbf{h}^{\mathrm{ord}}_{\mathfrak{M}}$ are Gorenstein (cf. \cite[Lemma 3.24]{BP}). Thus Theorem A follows by combining Theorem \ref{yandongmain} and Corollary \ref{canonical}.

\section{A calculation of the two-variable algebraic $p$-adic $L$-functions}

Let us keep the assumptions of Theorem \ref{yandongmain} from now on to the end of this paper. In this section, we calculate the set $\mathscr{L}_p^{\mathrm{alg}}\left(\rho_{\mathcal{F}}^{\mathrm{n. ord}, \left(i \right)}\right)$ of all algebraic $p$-adic $L$-functions when $i \equiv 0 \pmod{p-1}$ under certain conditions. By Theorem \ref{yandongmain}-(3), it is enough to calculate $L_p^{\mathrm{alg}}\left(\mathcal{T}^{\mathrm{min}, \left(0 \right)} \right)$ for the lattice $\mathbb{T}^{\mathrm{min}}$ in Theorem \ref{yandongmain}-(2).
\begin{thm}\label{910115}
Let us keep the assumptions of Theorem \ref{yandongmain}. Assume further the following conditions
\begin{list}{}{}
\item[(Tame)]The tame level $N=1$ or $N$ is square-free.
\item[(Prim)]The character $\chi$ is primitive and $\left.\chi\right|_{\left(\left.\mathbb{Z}\right/p\mathbb{Z} \right)^{\times}}\neq \omega$.
\item[($p$Four)]The ideal $\mathbb{J}$ defined in Theorem \ref{yandongmain} is principal and is generated by $a\left(p, \mathcal{F} \right)-1$.
\end{list}
Then $L_p^{\mathrm{alg}}\left(\mathcal{T}^{\mathrm{min}, \left(0 \right)} \right)$ is a unit in $\mathcal{R}$ and $\mathscr{L}_p^{\mathrm{alg}}\left(\rho_{\mathcal{F}}^{\mathrm{n. ord}, \left(0 \right)}\right)=D\left(a\left(p, \mathcal{F} \right)-1 \right)$. 
\end{thm}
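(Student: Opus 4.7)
The plan is to reduce Theorem \ref{910115} to proving $L_p^{\mathrm{alg}}(\mathcal{T}^{\mathrm{min}, (0)}) \in \mathcal{R}^\times$, and then to establish this unitness by specializing the two-variable deformation to its one-variable cyclotomic slices, where the recent calculation of Bella\"iche-Pollack \cite{BP} applies. First, I would invoke the third assertion of Theorem \ref{yandongmain} at the even index $i = 0$, yielding
\begin{equation*}
\mathscr{L}_p^{\mathrm{alg}}\bigl(\rho_{\mathcal{F}}^{\mathrm{n.ord}, (0)}\bigr) = L_p^{\mathrm{alg}}\bigl(\mathcal{T}^{\mathrm{min}, (0)}\bigr) \cdot D\bigl(\mathcal{J}^{**}\bigr).
\end{equation*}
Hypothesis ($p$Four) ensures $\mathbb{J}$ is principal and generated by $a(p,\mathcal{F}) - 1$, so $\mathcal{J}^{**} = (a(p, \mathcal{F}) - 1)\mathcal{R}$ is already reflexive. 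Consequently, once the unitness of $L_p^{\mathrm{alg}}(\mathcal{T}^{\mathrm{min}, (0)})$ is established, the desired equality $\mathscr{L}_p^{\mathrm{alg}}(\rho_{\mathcal{F}}^{\mathrm{n.ord}, (0)}) = D(a(p, \mathcal{F}) - 1)$ follows at once.

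For the unitness, I would fix an arithmetic specialization $\phi \in \mathfrak{X}_{\mathrm{arith}}(\mathbb{I})$, giving a classical $p$-ordinary eigen cusp form $f_\phi$ and inducing a surjection $\phi_{\ast} : \mathcal{R} = \mathbb{I}[[\Gamma]] \twoheadrightarrow \Lambda_{\mathcal{O}_\phi}^{\mathrm{cyc}}$. By Corollary \ref{canonical}, the specialized lattice $T^{\mathrm{min}}_\phi := \mathbb{T}^{\mathrm{min}} \otimes_{\mathbb{I}, \phi} \mathcal{O}_\phi$ is identified with the minimal (cuspidal) geometric lattice of $V_{f_\phi}$ coming from the \'etale cohomology of $X_1(Np^r)$. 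A control theorem for the dual Selmer group of the type used by Ochiai and Greenberg then yields, up to units,
\begin{equation*}
\phi_{\ast}\bigl(L_p^{\mathrm{alg}}(\mathcal{T}^{\mathrm{min}, (0)})\bigr) \doteq L_p^{\mathrm{alg}}\bigl(\widetilde{T}^{\mathrm{min}, (0)}_\phi\bigr) \in \Lambda_{\mathcal{O}_\phi}^{\mathrm{cyc}},
\end{equation*}
where the right-hand side is the one-variable cyclotomic algebraic $p$-adic $L$-function of $T^{\mathrm{min}}_\phi$. Under (Tame), (Prim), and ($p$Four), which descend to $f_\phi$ for sufficiently many $\phi$, the calculation of Bella\"iche-Pollack \cite{BP} in the one-variable cyclotomic setting shows that this right-hand side is a unit in $\Lambda_{\mathcal{O}_\phi}^{\mathrm{cyc}}$.

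To pass from this pointwise unitness to the full two-variable statement, I would argue by contradiction: any non-unit $L_p^{\mathrm{alg}}(\mathcal{T}^{\mathrm{min}, (0)})$ would be divisible by some height-one prime $\mathfrak{P}$ of $\mathcal{R}$, and by Zariski density of arithmetic points in $\mathrm{Spec}(\mathbb{I})$ one may choose $\phi$ so that $\mathfrak{P}$ survives under $\phi_{\ast}$, contradicting the unitness of the specialization. The main obstacle I expect lies in making the specialization/control statement precise: checking that the geometric characterization of $\mathbb{T}^{\mathrm{min}}$ from Corollary \ref{canonical} is compatible with arithmetic specialization, verifying that (Tame), (Prim), ($p$Four) transfer to infinitely many classical forms $f_\phi$, and handling any ``horizontal'' height-one prime $\mathfrak{P} \subset \mathcal{R}$ with $\mathfrak{P} \cap \mathbb{I} = 0$, which may require supplementing the $\mathbb{I}$-direction specialization by a $\Gamma$-direction specialization argument before concluding via Theorem \ref{yandongmain}-(3).
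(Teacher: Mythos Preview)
Your overall reduction and the specialization strategy are on the right track, but there is a genuine gap at the control step, and it is precisely the reason the paper does \emph{not} argue with $\mathbb{T}^{\mathrm{min}}$.

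The control map $r_{s,\mathcal{A}} : \mathrm{Sel}_{\mathcal{A}[\mathcal{P}]} \to \mathrm{Sel}_{\mathcal{A}}[\mathcal{P}]$ has kernel a quotient of $H^{0}(\mathbb{Q},\mathcal{A})/\mathcal{P}H^{0}(\mathbb{Q},\mathcal{A})$, which is Pontryagin dual to $(\mathcal{T}^{(0),*})_{G_{\mathbb{Q}}}[\mathcal{P}]$. For $\mathbb{T}^{\mathrm{min}}$ one has the non-split residual sequence $0 \to \mathbf{1} \to \mathbb{T}^{\mathrm{min}}/\mathfrak{m}_{\mathbb{I}}\mathbb{T}^{\mathrm{min}} \to \overline{\chi} \to 0$; dualizing, $\mathcal{T}^{\mathrm{min},(0),*}/\mathfrak{M}_{\mathcal{R}}$ sits in a non-split sequence $0 \to \overline{\chi}^{-1} \to \cdot \to \mathbf{1} \to 0$, so it \emph{does} admit a one-dimensional trivial quotient. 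Hence $(\mathcal{T}^{\mathrm{min},(0),*})_{G_{\mathbb{Q}}} \neq 0$ and the kernel of your control map is genuinely nonzero. Your claimed identity $\phi_{\ast}\bigl(L_p^{\mathrm{alg}}(\mathcal{T}^{\mathrm{min},(0)})\bigr) \doteq L_p^{\mathrm{alg}}\bigl(\widetilde{T}^{\mathrm{min},(0)}_\phi\bigr)$ therefore fails up to a factor you have not controlled, and the Zariski-density argument collapses.

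The paper avoids this by running the specialization argument with $\mathbb{T}^{\mathrm{max}}$ instead. Lemma~\ref{controllattice} shows that $T_\phi^{\mathrm{max}}/\varpi_\phi T_\phi^{\mathrm{max}}$ is the non-split extension $0 \to \overline{\chi} \to \cdot \to \mathbf{1} \to 0$, and the same Nakayama argument then gives $(\mathcal{T}^{\mathrm{max},(0),*})_{G_{\mathbb{Q}}}=0$ (Lemma~\ref{controlsel}-(2)), so control holds for $\mathcal{A}^{\mathrm{max}}$. The Bella\"iche--Pollack input \cite[Theorem~5.12]{BP} is also stated for this lattice, yielding $\mathrm{char}_{\mathcal{R}/\mathcal{P}}\bigl(\mathrm{Sel}_{\mathcal{A}^{\mathrm{max}}[\mathcal{P}]}\bigr)^{\lor}=(a(p,f_\phi)-1)$. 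Combined with the a priori inclusion $(L_p^{\mathrm{alg}}(\mathcal{T}^{\mathrm{max},(0)}))\subset (a(p,\mathcal{F})-1)$ coming from \eqref{10284} and ($p$Four), one gets equality at the two-variable level and then divides out by $\mathcal{J}^{**}$ to conclude that $L_p^{\mathrm{alg}}(\mathcal{T}^{\mathrm{min},(0)})$ is a unit. In short: switch the specialization argument to $\mathbb{T}^{\mathrm{max}}$, where the control theorem is actually available, and only pass to $\mathbb{T}^{\mathrm{min}}$ at the very end via Theorem~\ref{yandongmain}-(3).
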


We prove Theorem \ref{910115} by specialization. The specialization method is developed by Ochiai \cite[\S 7]{Ochiai06} for the residually irreducible case. We consider the residually reducible case. Recall that $\mathbb{T}^{\mathrm{max}}$ is the $G_{\mathbb{Q}}$-stable $\mathbb{I}$-free lattice which satisfies \eqref{12011}.

\begin{lem}\label{controllattice}
Let us keep the assumptions and the notation of Theorem \ref{yandongmain}. Let $\phi$ be an arithmetic specialization. Let $T_{\phi}^{\mathrm{max}}=\mathbb{T}^{\mathrm{max}}\otimes_{\mathbb{I}}\phi\left(\mathbb{I} \right)$ and $\varpi_{\phi}$ a fixed uniformizer of $\phi\left(\mathbb{I} \right)$. Assume further that the ideal $\mathbb{J}$ is principal. Then we have the following non-split exact sequence of $\phi\left(\mathbb{I} \right)[G_{\mathbb{Q}}]$-modules $$0 \rightarrow \left.\phi\left(\mathbb{I} \right)\right/\left(\varpi_{\phi}\right)\ \left(\chi \right) \rightarrow T_{\phi}^{\mathrm{max}}/\varpi_{\phi} T_{\phi}^{\mathrm{max}} \rightarrow \left.\phi\left(\mathbb{I} \right)\right/\left(\varpi_{\phi}\right)\ \left(\mathbf{1} \right) \rightarrow 0.$$
\end{lem}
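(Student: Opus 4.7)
The plan is to transfer the statement to the discrete valuation ring setting by applying Proposition \ref{22} to the specialized representation $\rho_\phi: G_{\mathbb{Q}} \to \mathrm{GL}(V_\phi)$, where $V_\phi = T_\phi^{\mathrm{max}} \otimes_{\phi(\mathbb{I})}\mathrm{Frac}(\phi(\mathbb{I}))$. Writing $\mathbb{J} = (j)$ with $j \in \mathfrak{m}_{\mathbb{I}}$ (the containment $\mathbb{J}\subsetneq \mathbb{I}$ follows from combining (Red) with the irreducibility of $\mathbb{V}_{\mathcal{F}}$ in Theorem \ref{Hida}) and $\phi(j) = \varpi_\phi^{m}u$ for some $u \in \phi(\mathbb{I})^{\times}$, $m \geq 1$, the first task is to compute the ideal of reducibility $I(\rho_\phi)$ of $\phi(\mathbb{I})$. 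Using the $\mathbb{K}$-basis of $\mathbb{V}_{\mathcal{F}}$ from the proof of Lemma \ref{3.7} in which $\rho_{\mathcal{F}}(g_0)$ is diagonal with eigenvalues distinct modulo $\mathfrak{m}_{\mathbb{I}}$, the entries $b(g), c(g')$ of $\rho_{\mathcal{F}}$ generate $I(\rho_{\mathcal{F}}) = \mathbb{J}$; specializing by $\phi$ gives a basis of $V_\phi$ with $\rho_\phi(g_0)$ diagonal and eigenvalues distinct modulo $\varpi_\phi$, and the construction of $I(\rho_\phi)$ in the proof of Proposition \ref{21} then yields $I(\rho_\phi) = \phi(\mathbb{J}) = (\varpi_\phi^m)$.

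Next, the plan is to apply Proposition \ref{22}(3) to $\rho_\phi$ to produce a chain of $G_\mathbb{Q}$-stable $\phi(\mathbb{I})$-lattices of length $m+1$, and identify $T_\phi^{\mathrm{max}}$ with its top element. Specializing the isomorphism of Lemma \ref{Fil2} (applied to $\mathbb{T}^{\mathrm{max}}$ and $\mathbb{T}^{\mathrm{min}}$) yields
\begin{equation*}
T_\phi^{\mathrm{max}}/T_\phi^{\mathrm{min}} \cong \phi(\mathbb{I})/(\varpi_\phi^m)\,(\mathbf{1})
\end{equation*}
(with $T_\phi^{\mathrm{min}} := \mathbb{T}^{\mathrm{min}} \otimes_{\mathbb{I}} \phi(\mathbb{I})$, using that the $\mathrm{Tor}$ term vanishes because $\phi(j) \neq 0$ in the domain $\phi(\mathbb{I})$), so $T_\phi^{\mathrm{max}}$ and $T_\phi^{\mathrm{min}}$ are the extremes of the chain, and using the relabeling freedom in Proposition \ref{22}(3) one may arrange that the character acting on $T_\phi^{\mathrm{max}}/T_\phi^{\mathrm{min}}$ is precisely the trivial character.

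Now choose a $\phi(\mathbb{I})$-basis $\{\epsilon_1, \epsilon_2\}$ of $T_\phi^{\mathrm{max}}$ adapted to $T_\phi^{\mathrm{min}} = \phi(\mathbb{I})\epsilon_1 + \varpi_\phi^m \phi(\mathbb{I})\epsilon_2$. The stability of $T_\phi^{\mathrm{min}}$ under $G_\mathbb{Q}$ forces the matrix $\bigl(\begin{smallmatrix} a(g) & b(g) \\ c(g) & d(g) \end{smallmatrix}\bigr)$ of $\rho_\phi(g)$ to satisfy $c(g) \in (\varpi_\phi^m)$, while the trivial action on $T_\phi^{\mathrm{max}}/T_\phi^{\mathrm{min}}$ gives $d(g) \equiv 1 \pmod{\varpi_\phi^m}$; combining the trace identity $\mathrm{tr}\,\rho_\phi(g) = a(g) + d(g)$ with Lemma \ref{3.7} yields $a(g)\equiv \chi\kappa_{\mathrm{cyc}}\kappa_{\mathrm{cyc}}^{\mathrm{univ}}(g) \pmod{\varpi_\phi^m}$. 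Reducing modulo $\varpi_\phi$ and noting that $\kappa_{\mathrm{cyc}}$ and $\kappa_{\mathrm{cyc}}^{\mathrm{univ}}$ become trivial in the residue field, one obtains the exact sequence in the lemma statement with submodule of type $\chi$ and quotient of type $\mathbf{1}$, the extension class being measured by $\overline{b(g)}$.

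The non-splitness then follows from the size of $I(\rho_\phi)$: if $b(g) \in (\varpi_\phi)$ for every $g \in G_\mathbb{Q}$, then $b(g)c(g') \in (\varpi_\phi)(\varpi_\phi^m) = (\varpi_\phi^{m+1})$ for all $g, g' \in G_\mathbb{Q}$, forcing $I(\rho_\phi) \subset (\varpi_\phi^{m+1})$ and contradicting $I(\rho_\phi)=(\varpi_\phi^m)$. The main technical obstacle will be the specialization compatibility of the ideal of reducibility in the first step: strictly speaking, the basis making $\rho_\phi(g_0)$ diagonal (used to compute $I(\rho_\phi)$) and the chain-adapted basis $\{\epsilon_1, \epsilon_2\}$ (used in the third step) need not coincide, so one must appeal to the basis-independent characterization of $I(\rho_\phi)$ in Proposition \ref{21} in order to conclude that the bound $b(g) c(g') \in (\varpi_\phi^{m+1})$ obtained in the chain-adapted basis still forces $I(\rho_\phi) \subset (\varpi_\phi^{m+1})$.
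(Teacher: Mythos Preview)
Your proof is correct and shares the paper's overall strategy: establish $I(\rho_{f_\phi}) = \phi(\mathbb{J})$, realize $T_\phi^{\mathrm{max}}$ and $T_\phi^{\mathrm{min}}$ as the two endpoints of the chain of $G_{\mathbb{Q}}$-stable lattices for $\rho_{f_\phi}$, and read off the shape of $T_\phi^{\mathrm{max}}/\varpi_\phi T_\phi^{\mathrm{max}}$. The non-splitness step, however, is handled differently. The paper observes that every intermediate lattice $T_{\phi,j}$ in the chain ($1 \le j \le l_\phi - 1$) has semisimple reduction, since $T_{\phi,j}/\varpi_\phi T_{\phi,j} \cong (T_{\phi,j+1}/\varpi_\phi T_{\phi,j}) \oplus (\varpi_\phi T_{\phi,j-1}/\varpi_\phi T_{\phi,j})$; Ribet's lemma then forces the endpoints to have non-semisimple reduction. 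Your argument is instead a direct contradiction on the size of $I(\rho_\phi)$: if the extension class $\overline{b}$ vanished, then in your chain-adapted basis $b(g)c(g') \in (\varpi_\phi^{m+1})$, so $a$ and $d$ would be characters modulo $\varpi_\phi^{m+1}$, and the trace characterization of Proposition~\ref{21} would give $I(\rho_\phi) \subset (\varpi_\phi^{m+1})$. Your route avoids the external appeal to Ribet's lemma, at the cost of the basis-compatibility check you rightly flag at the end; both arguments are ultimately powered by the equality $\sharp\mathscr{L}(\rho_{f_\phi}) = \mathrm{ord}_{\varpi_\phi}\phi(\mathbb{J}) + 1$.

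One small omission: when you write $\phi(j) = \varpi_\phi^m u$ with $u$ a unit and $m \ge 1$, you implicitly assume $\phi(j) \neq 0$, which you do not justify. The paper handles this first, by noting that $\rho_{f_\phi}$ is irreducible, so $\sharp\mathscr{L}(\rho_{f_\phi}) < \infty$, whence $I(\rho_{f_\phi}) = \phi(\mathbb{J}) \neq 0$ by Proposition~\ref{22}(2).
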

\begin{proof}
First we show that $\phi\left(\mathbb{J} \right)\neq 0$. Since $\mathbb{J}=I\left(\rho_{\mathcal{F}} \right)$ by Lemma \ref{redu}, we have $\phi\left(\mathbb{J} \right)=I\left(\rho_{f_{\phi}} \right)$ by \cite[Lemma 3.7]{Y}, where $I\left(\rho_{f_{\phi}} \right)$ is the ideal of reducibility of $\phi\left(\mathbb{I} \right)$ corresponding to $\rho_{f_{\phi}}$. Thus we have the following equality:
\begin{equation}\label{1903101}
\sharp\mathscr{L}\left(\rho_{f_{\phi}} \right)=\mathrm{ord}_{\varpi_{\phi}}\phi\left(\mathbb{J} \right)+1
\end{equation}
by \cite[Proposition 3.4]{Y}. Since $\rho_{f_{\phi}}$ is irreducible, $\sharp\mathscr{L}\left(\rho_{f_{\phi}} \right)$ must be finite. Hence $\phi\left(\mathbb{J} \right)\neq 0$ by the equality \eqref{1903101}.

Since $\mathbb{J}$ is principal, we have 
\begin{equation}\label{190412}
\left.\mathbb{T}^{\mathrm{max}}\right/\mathbb{T}^{\mathrm{min}} \stackrel{\sim}{\rightarrow} \left.\mathbb{I}\right/\mathbb{J}\,\left(\mathbf{1} \right)
\end{equation}
by \eqref{12011}. We denote by $T_{\phi}^{\mathrm{min}}$ the image of $\mathbb{T}^{\mathrm{min}}$ under $\mathbb{T}^{\mathrm{max}} \twoheadrightarrow T_{\phi}^{\mathrm{max}}$. Since $\phi\left(\mathbb{J} \right)\neq 0$, we have that $T_{\phi}^{\mathrm{min}}$ is a $G_{\mathbb{Q}}$-stable lattice of $\rho_{f_{\phi}}$ by \eqref{190412}. We have the following isomorphism:
\begin{equation}\label{1903102}
\left.T_{\phi}^{\mathrm{max}}\right/T_{\phi}^{\mathrm{min}} \stackrel{\sim}{\rightarrow} \left.\phi\left(\mathbb{I} \right)\right/\phi\left(\mathbb{J} \right)\,\left(\mathbf{1} \right).
\end{equation}
Let $l_{\phi}=\mathrm{ord}_{\varpi_{\phi}}\phi\left(\mathbb{J} \right)$ by the equality \eqref{1903101}. For any $0 \leq j \leq l_{\phi}$, let
\begin{equation*}
T_{\phi, j}=\mathrm{Ker}\left(T_{\phi}^{\mathrm{max}} \twoheadrightarrow \left.T_{\phi}^{\mathrm{max}}\right/T_{\phi}^{\mathrm{min}} \twoheadrightarrow \left.\phi\left(\mathbb{I} \right)\right/\left(\varpi_{\phi}\right)^j\,\left(\mathbf{1} \right) \right).
\end{equation*}
We have $T_{\phi, j}$ and $T_{\phi, j^{\prime}}$ are not isomorphic if $j \neq j^{\prime}$ by the proof of \cite[Proposition 3.4]{Y}. Then by \eqref{1903101}, the following chain of lattices:
\begin{equation*}\label{190310}
T_{\phi}^{\mathrm{max}}=T_{\phi, 0} \supsetneq \cdots \supsetneq T_{\phi, l_{\phi}}=T_{\phi}^{\mathrm{min}}
\end{equation*}
is a system of representatives of $\mathscr{L}\left(\rho_{f_{\phi}} \right)$. We have the following isomorphism of $\phi\left(\mathbb{I} \right)[G_{\mathbb{Q}}]$-modules for any $j=1, \cdots, l_{\phi}-1$:$$\left.T_{\phi, j}\right/\varpi_{\phi}T_{\phi, j}\cong\left.T_{\phi, j+1}\right/\varpi_{\phi}T_{\phi, j}\,\displaystyle\bigoplus\,\left.\varpi_{\phi}T_{\phi, j-1}\right/\varpi_{\phi}T_{\phi, j}.$$This implies that $\phi\left(\mathbb{I} \right)[G_{\mathbb{Q}}]$-modules $\left.T_{\phi, j}\right/\varpi_{\phi}T_{\phi, j}$ are semi-simple for all $j=1, \cdots, l_{\phi}-1$. Thus $\left.T_{\phi}^{\mathrm{max}}\right/\varpi_{\phi}T_{\phi}^{\mathrm{max}}$ and $T_{\phi}^{\mathrm{min}}/\varpi_{\phi}T_{\phi}^{\mathrm{min}}$ must be not semi-simple $\phi\left(\mathbb{I} \right)[G_{\mathbb{Q}}]$-modules by Ribet's lemma (cf. \cite[Proposition 2.1]{Ri76}). We have the following exact sequence of $\phi\left(\mathbb{I} \right)[G_{\mathbb{Q}}]$-modules:
\begin{equation*}
0 \rightarrow \left.T_{\phi, 1}\right/\varpi_{\phi}T_{\phi}^{\mathrm{max}} \rightarrow \left.T_{\phi}^{\mathrm{max}}\right/\varpi_{\phi}T_{\phi}^{\mathrm{max}} \rightarrow \phi\left(\mathbb{I} \right)/\left(\varpi_{\phi} \right)\,\left(\mathbf{1} \right) \rightarrow 0.
\end{equation*}
Thus it must be non-split.
\end{proof}

Let $\mathbb{T}$ be a $G_{\mathbb{Q}}$-stable $\mathbb{I}$-free lattice of $\mathbb{V}_{\mathcal{F}}$. Let $\mathcal{T}^{\left(i \right)}=\mathbb{T}\hat{\otimes}_{\mathbb{Z}_p}\mathbb{Z}_p[[\Gamma]]\left(\tilde{\kappa}^{-1} \right)\otimes\omega^i$ and $\mathcal{A}=\mathcal{T}^{\left(i \right)}\otimes_{\mathcal{R}}\mathcal{R}^{\lor}$. Let $\mathcal{P}=\mathrm{Ker}\left(\phi\right)\mathcal{R}$ be a height-one prime ideal of $\mathcal{R}$ for $\phi \in \mathfrak{X}_{\mathrm{arith}}\left(\mathbb{I} \right)$. We define $\mathrm{Sel}_{\mathcal{A}[\mathcal{P}]}$ as follows:
\begin{equation}\label{1904091}
\mathrm{Sel}_{\mathcal{A}[\mathcal{P}]}=\mathrm{Ker}\left[H^{1}\left(\mathbb{Q}_{\Sigma}/\mathbb{Q}, \mathcal{A}[\mathcal{P}] \right) \rightarrow \displaystyle\prod_{l \in \Sigma\setminus\{\infty, p\}}H^{1}\left(I_l, \mathcal{A}[\mathcal{P}] \right)\times H^{1}\left(I_p, \left.\mathcal{A}[\mathcal{P}]\right/F^{+}\mathcal{A}[\mathcal{P}] \right) \right].
\end{equation}
Let $r_{s, \mathcal{A}}$ be the following map induced by $\mathcal{A}[\mathcal{P}] \hookrightarrow \mathcal{A}$:
\begin{equation}\label{res}
r_{s, \mathcal{A}} : \mathrm{Sel}_{\mathcal{A}[\mathcal{P}]} \rightarrow \mathrm{Sel}_{\mathcal{A}}[\mathcal{P}].
\end{equation}

Now we study the control theorem for Selmer groups in the sense of \cite[Proposition 5.1 and Proposition 5.2]{Ochiai06}. Note that \cite[Proposition 5.1 and Proposition 5.2]{Ochiai06} are proved under the assumption that the residue representation $\rho_{\mathcal{F}}\left(\mathfrak{m}_{\mathbb{I}} \right)$ is irreducible. We consider the residually reducible case. 

\begin{lem}\label{controlsel}
Let us keep the assumptions of Theorem \ref{yandongmain}. Then we have the following statements for a height-one prime ideal $\mathcal{P}$.
\begin{enumerate}
\renewcommand{\labelenumi}{(\arabic{enumi})}
\item Assume $i \not\equiv 0 \pmod{p-1}$ and $\omega^i \neq \chi^{-1}$. Then for any $G_{\mathbb{Q}}$-stable $\mathbb{I}$-free lattice $\mathbb{T}$, the map $r_{s, \mathcal{A}}$ is injective. 
\item Assume $i \equiv 0 \pmod{p-1}$ and that the ideal $\mathbb{J}$ is principal. Then when we choose $\mathbb{T}$ to be $\mathbb{T}^{\mathrm{max}}$ in \eqref{12011}, the map $r_{s, \mathcal{A}^{\mathrm{max}}}: \mathrm{Sel}_{\mathcal{A}^{\mathrm{max}}[\mathcal{P}]} \rightarrow \mathrm{Sel}_{\mathcal{A}^{\mathrm{max}}}[\mathcal{P}]$ is injective, where $\mathcal{A}^{\mathrm{max}}=\mathbb{T}^{\mathrm{max}}\hat{\otimes}_{\mathbb{Z}_p}\mathbb{Z}_p[[\Gamma]]\left(\tilde{\kappa}_{\mathrm{cyc}} \right)\otimes_{\mathcal{R}}\mathcal{R}^{\lor}$.
\item Assume further the condition (Tame) in Theorem \ref{910115}, then the map $r_{s, \mathcal{A}}$ is surjective for any $G_{\mathbb{Q}}$-stable $\mathbb{I}$-free lattice $\mathbb{T}$. 
\end{enumerate}
\end{lem}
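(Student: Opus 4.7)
The plan is to apply a standard control-theorem argument based on snake-lemma analysis of a commutative diagram of Galois cohomology. Choose a generator $\pi$ of $\mathcal{P}$ (possible since $\mathcal{R}=\mathbb{I}[[\Gamma]]\cong\mathcal{O}[[X,Y]]$ is a UFD). The short exact sequence $0\to\mathcal{A}[\mathcal{P}]\to\mathcal{A}\xrightarrow{\pi}\mathcal{A}\to 0$ and its local analogues yield the commutative diagram with exact rows
$$\begin{CD}
0 @>>> \mathrm{Sel}_{\mathcal{A}[\mathcal{P}]} @>>> H^1(\mathbb{Q}_\Sigma/\mathbb{Q},\mathcal{A}[\mathcal{P}]) @>>> \prod_{l}L_l(\mathcal{A}[\mathcal{P}]) \\
@. @VV r_{s,\mathcal{A}} V @VV a V @VV b V \\
0 @>>> \mathrm{Sel}_{\mathcal{A}}[\mathcal{P}] @>>> H^1(\mathbb{Q}_\Sigma/\mathbb{Q},\mathcal{A})[\mathcal{P}] @>>> \prod_{l}L_l(\mathcal{A})[\mathcal{P}]
\end{CD}$$
where $L_l(-)=H^1(I_l,-)$ for $l\in\Sigma\setminus\{p,\infty\}$ and $L_p(-)=H^1(I_p,-/F^+-)$. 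The long exact sequence of cohomology identifies $\mathrm{Ker}(a)\cong H^0(\mathbb{Q}_\Sigma/\mathbb{Q},\mathcal{A})/\mathcal{P}$ and $\mathrm{Ker}(b)$ with the corresponding local $H^0/\mathcal{P}$. The snake lemma then reduces injectivity of $r_{s,\mathcal{A}}$ to $\mathrm{Ker}(a)=0$ and surjectivity to $\mathrm{Ker}(b)=0$.

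For (1), the goal is to show $H^0(\mathbb{Q}_\Sigma/\mathbb{Q},\mathcal{A})=0$. Since $\tilde\kappa$ trivialises modulo the maximal ideal of $\mathbb{Z}_p[[\Gamma]]$, the residual representation $\mathcal{A}[\mathfrak{m}_{\mathcal{R}}]$ has (semisimplified) composition factors $\omega^i$ and $\bar\chi\omega^i$. Under $i\not\equiv 0\pmod{p-1}$ and $\omega^i\neq\chi^{-1}$, both are non-trivial, so $\mathcal{A}[\mathfrak{m}_{\mathcal{R}}]^{G_\mathbb{Q}}=0$. Because $(\mathcal{A}^{G_\mathbb{Q}})^{\lor}$ is a finitely generated $\mathcal{R}$-module, Nakayama's lemma yields $\mathcal{A}^{G_\mathbb{Q}}=0$.

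For (2), the character $\omega^i$ is trivial and both residual composition factors contain $\mathbf{1}$, so the argument in (1) fails and we must exploit the specific lattice $\mathbb{T}^{\mathrm{max}}$. Since $\mathbb{J}$ is principal, applying the argument of Lemma \ref{controllattice} at the residue field of $\mathbb{I}$ (using $\mathbb{T}^{\mathrm{max}}/\mathbb{T}^{\mathrm{min}}\cong\mathbb{I}/\mathbb{J}(\mathbf{1})$ and the principal generator to control the $\mathrm{Tor}$ term) shows that $\mathbb{T}^{\mathrm{max}}/\mathfrak{m}_{\mathbb{I}}\mathbb{T}^{\mathrm{max}}$ sits in a non-split exact sequence $0\to\bar\chi\to\mathbb{T}^{\mathrm{max}}/\mathfrak{m}_{\mathbb{I}}\to\mathbf{1}\to 0$. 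Since $\bar\chi\neq\mathbf{1}$ and the extension does not split, the induced map $(\mathbb{T}^{\mathrm{max}}/\mathfrak{m}_{\mathbb{I}})^{G_\mathbb{Q}}\to\mathbf{1}^{G_\mathbb{Q}}$ must be zero, so $(\mathbb{T}^{\mathrm{max}}/\mathfrak{m}_{\mathbb{I}})^{G_\mathbb{Q}}\hookrightarrow\bar\chi^{G_\mathbb{Q}}=0$. The same Nakayama argument as in (1) then yields $(\mathcal{A}^{\mathrm{max}})^{G_\mathbb{Q}}=0$.

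For (3), the task is to eliminate the local terms $\mathrm{Ker}(b)$. At $l\mid N$ with $l\neq p$, the hypothesis (Tame) that $N$ is square-free constrains the monodromy of $\mathbb{V}_{\mathcal{F}}$ at $l$: either $I_l$ acts through a finite-order Dirichlet character (when $l\mid\mathrm{cond}(\chi)$) or by unipotent monodromy (when $l\Vert N$ with trivial character part at $l$). In either case an explicit computation identifies $H^0(I_l,\mathcal{A})$ as a $\mathcal{P}$-divisible quotient of a cofree $\mathcal{R}^{\lor}$-module, giving $H^0(I_l,\mathcal{A})/\mathcal{P}=0$. At $p$, since $\varepsilon_2$ is unramified, $I_p$ acts on $\mathcal{A}/F^+\mathcal{A}$ only through $\tilde\kappa^{-1}\omega^i$, and the invariants form a $\mathcal{R}^{\lor}$-module cut out by the $\Gamma$-augmentation ideal whose $\mathcal{P}$-divisibility follows from ($D_p$-dist). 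The principal obstacle will be the local analysis at ramified $l\mid N$: it is exactly where (Tame) is essential, since without the square-free hypothesis the local monodromy can be too complicated to describe uniformly.
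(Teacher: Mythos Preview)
Parts (1) and (2) are correct and follow the paper's approach: both set up the same snake-lemma diagram and reduce injectivity to the vanishing of $H^0(G_{\mathbb{Q}},\mathcal{A})/\mathcal{P}$, then use Nakayama together with the residual analysis. The only cosmetic difference is that the paper dualizes, working with $(\mathcal{T}^{(i),*})_{G_{\mathbb{Q}}}$ and showing it has no type-$\mathbf{1}$ \emph{quotient}, whereas you work directly with $\mathcal{A}^{G_{\mathbb{Q}}}$; your use of the non-split extension $0\to\bar\chi\to\mathbb{T}^{\mathrm{max}}/\mathfrak{m}_{\mathbb{I}}\to\mathbf{1}\to 0$ from Lemma~\ref{controllattice} is exactly the key input the paper uses (after dualizing).

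For part (3) there is a genuine gap. Your reduction of surjectivity to $\mathrm{Ker}(b)=0$ is valid in principle, but the local analysis is not carried out. At $l\mid N$ you assert that ``an explicit computation'' gives $\mathcal{P}$-divisibility of $H^0(I_l,\mathcal{A})$, but no computation is given; in the unipotent case $(\mathbb{T})_{I_l}$ has $\mathbb{I}$-rank one rather than being torsion, so the divisibility claim is not automatic. At $p$, saying that $\mathcal{P}$-divisibility ``follows from ($D_p$-dist)'' is not a justification: ($D_p$-dist) only guarantees (via Lemma~\ref{FO}) that $F^{\pm}\mathbb{T}$ are free, so that the filtration behaves well under $[\mathcal{P}]$; it says nothing about divisibility of $H^0(I_p,F^{-}\mathcal{A})$. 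The paper takes a different route for (3): it invokes Ochiai's control arguments \cite[Propositions~5.1--5.2]{Ochiai06} to identify $\mathrm{Coker}(r_{s,\mathcal{A}})$ with a subquotient of $U_\phi=\bigoplus_{l\mid N}\bigl((\mathbb{T}^*)_{I_l}[\mathcal{P}]\,\hat\otimes\,\mathbb{Z}_p[[\Gamma]](\tilde\kappa)\otimes\omega^{-i}\bigr)^{D_p}$ (the contribution at $p$ being handled inside that reference), and then uses that under (Tame) together with (Prim) the local automorphic representation $\pi_l(f_\phi)$ is principal series for every $l\mid N$, so $I_l$ acts through a finite image and \cite[Theorem~3.3-(1)]{Ochiai06} forces $(\mathbb{T}^*)_{I_l}[\mathcal{P}]=0$. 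In that setting your unipotent branch never occurs, and the real content of the surjectivity lies in these cited structural results, which your sketch does not reproduce.
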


\begin{proof}
Let us consider the following commutative diagram:
\small
\begin{equation}\label{190208}
\xymatrix{
    0 \ar[r] & \mathrm{Ker}\left(\iota^{\mathrm{Sel}}_{\mathcal{A}[\mathcal{P}], \mathcal{A}} \right) \ar[r] \ar[d] & \dfrac{H^{0}\left(\mathbb{Q}_{\Sigma}/\mathbb{Q}, \mathcal{A}\right)}{\mathcal{P} H^{0}\left(\mathbb{Q}_{\Sigma}/\mathbb{Q}, \mathcal{A} \right)} \ar@{^{(}-_>}[d] \\
    0 \ar[r]   & \mathrm{Sel}_{\mathcal{A}[\mathcal{P}]} \ar[r] \ar[d]^{r_{s, \mathcal{A}}} & H^1\left(\mathbb{Q}_{\Sigma}/\mathbb{Q}, \mathcal{A}[\mathcal{P}] \right) \ar[r] \ar[d]^{r_{h, \mathcal{A}}} & H^{1}\left(I_p, F^{-}\mathcal{A}[\mathcal{P}] \right)\times\displaystyle\prod_{l \in \Sigma\setminus\{p, \infty\}}H^{1}\left(I_l, \mathcal{A}[\mathcal{P}] \right) \ar[d] \\
    0 \ar[r] & \mathrm{Sel}_{\mathcal{A}}[\mathcal{P}] \ar[r] & H^1\left(\mathbb{Q}_{\Sigma}/\mathbb{Q}, \mathcal{A} \right)[\mathcal{P}] \ar[r] & H^{1}\left(I_p, F^{-}\mathcal{A} \right)[\mathcal{P}]\times\displaystyle\prod_{l \in \Sigma\setminus\{p, \infty \}}H^{1}\left(I_l, \mathcal{A} \right)[\mathcal{P}]. \\
   }
\end{equation}
\normalsize
We have the isomorphism $\dfrac{H^{0}\left(\mathbb{Q}_{\Sigma}/\mathbb{Q}, \mathcal{A}\right)}{\mathcal{P} H^{0}\left(\mathbb{Q}_{\Sigma}/\mathbb{Q}, \mathcal{A} \right)}\stackrel{\sim}{\rightarrow} \left(\left(\mathcal{T}^{\left(i \right), *} \right)_{G_{\mathbb{Q}}}[\mathcal{P}] \right)^{\lor}$. 

Since the first and the second assertions are proved in the same way, we prove the second assertion. By the above argument, it is sufficient to prove that the module $\left(\mathcal{T}^{\mathrm{max}, *} \right)_{G_{\mathbb{Q}}}$ is zero. By taking the base extension $\otimes_{\mathcal{R}}\left.\mathcal{R}\right/\mathfrak{M}_{\mathcal{R}}$, we have the surjection of $\mathcal{R}$-modules $$\left.\mathcal{T}^{\mathrm{max},*}\right/\mathfrak{M}_{\mathcal{R}}\mathcal{T}^{\mathrm{max},*} \twoheadrightarrow \left.\left(\mathcal{T}^{\mathrm{max},*} \right)_{G_{\mathbb{Q}}}\right/\mathfrak{M}_{\mathcal{R}}\left(\mathcal{T}^{\mathrm{max},*} \right)_{G_{\mathbb{Q}}}.$$ Since $\mathcal{T}^{\mathrm{max}}$ is free of rank two over $\mathcal{R}$, the $\left.\mathcal{R}\right/\mathfrak{M}_{\mathcal{R}}$-vector space $\left.\left(\mathcal{T}^{\mathrm{max},*} \right)_{G_{\mathbb{Q}}}\right/\mathfrak{M}_{\mathcal{R}}\left(\mathcal{T}^{\mathrm{max},*} \right)_{G_{\mathbb{Q}}}$ has dimension less than or equal to two. Assume that $\left.\left(\mathcal{T}^{\mathrm{max},*} \right)_{G_{\mathbb{Q}}}\right/\mathfrak{M}_{\mathcal{R}}\left(\mathcal{T}^{\mathrm{max},*} \right)_{G_{\mathbb{Q}}}$ has dimension two. Then $\left.\left(\mathcal{T}^{\mathrm{max},*} \right)_{G_{\mathbb{Q}}}\right/\mathfrak{M}_{\mathcal{R}}\left(\mathcal{T}^{\mathrm{max},*} \right)_{G_{\mathbb{Q}}}$ is isomorphic to $\left.\mathcal{T}^{\mathrm{max},*}\right/\mathfrak{M}_{\mathcal{R}}\mathcal{T}^{\mathrm{max},*}$. This contradicts to the fact that $\rho_{\mathcal{F}}\left(\mathfrak{m}_{\mathbb{I}} \right)$ is $G_{\mathbb{Q}}$-distinguished since the determinant $\mathrm{det}\,\rho_{\mathcal{F}}$ is an odd character. Assume that $\left.\left(\mathcal{T}^{\mathrm{max},*} \right)_{G_{\mathbb{Q}}}\right/\mathfrak{M}_{\mathcal{R}}\left(\mathcal{T}^{\mathrm{max},*} \right)_{G_{\mathbb{Q}}}$ has dimension one. Then $\left.\left(\mathcal{T}^{\mathrm{max},*} \right)_{G_{\mathbb{Q}}}\right/\mathfrak{M}_{\mathcal{R}}\left(\mathcal{T}^{\mathrm{max},*} \right)_{G_{\mathbb{Q}}}$ is a type $\mathbf{1}$ quotient of $\left.\mathcal{T}^{\mathrm{max},*}\right/\mathfrak{M}_{\mathcal{R}}\mathcal{T}^{\mathrm{max},*}$. By Lemma \ref{controllattice} we have the following non-spit exact sequence:
\begin{equation}\label{190329}
0 \rightarrow \left.\mathcal{R}\right/\mathfrak{M}_{\mathcal{R}}\ \left(\chi \right) \rightarrow \left.\mathcal{T}^{\mathrm{max}}\right/\mathfrak{M}_{\mathcal{R}}\mathcal{T}^{\mathrm{max}} \rightarrow \left.\mathcal{R}\right/\mathfrak{M}_{\mathcal{R}}\ \left(\mathbf{1} \right) \rightarrow 0.
\end{equation}
Since $\mathcal{T}^{\mathrm{max}}$ is a projective $\mathcal{R}$-module, we have the following isomorphism $$\mathrm{Hom}_{\left.\mathcal{R}\right/\mathfrak{M}_{\mathcal{R}}}\left(\left.\mathcal{T}^{\mathrm{max}}\right/\mathfrak{M}_{\mathcal{R}}\mathcal{T}^{\mathrm{max}}, \left.\mathcal{R}\right/\mathfrak{M}_{\mathcal{R}} \right) \stackrel{\sim}{\rightarrow} \left.\mathcal{T}^{\mathrm{max},*}\right/\mathfrak{M}_{\mathcal{R}}\mathcal{T}^{\mathrm{max},*}$$ by induction on the minimal number of generators of $\mathfrak{M}_{\mathcal{R}}$ (see \cite[Lemma A.12]{BP} for example). Then we have the following non-split exact sequence by \eqref{190329}:
\begin{equation*}
0 \rightarrow \left.\mathcal{R}\right/\mathfrak{M}_{\mathcal{R}}\ \left(\mathbf{1} \right) \rightarrow \left.\mathcal{T}^{\mathrm{max},*}\right/\mathfrak{M}_{\mathcal{R}}\mathcal{T}^{\mathrm{max},*} \rightarrow \left.\mathcal{R}\right/\mathfrak{M}_{\mathcal{R}}\ \left(\chi^{-1} \right) \rightarrow 0.
\end{equation*}
This implies that $\left.\mathcal{T}^{\mathrm{max},*}\right/\mathfrak{M}_{\mathcal{R}}\mathcal{T}^{\mathrm{max},*}$ has no type $\mathbf{1}$ quotient which contradicts to our assumption. Thus $\left.\left(\mathcal{T}^{\mathrm{max},*} \right)_{G_{\mathbb{Q}}}\right/\mathfrak{M}_{\mathcal{R}}\left(\mathcal{T}^{\mathrm{max},*} \right)_{G_{\mathbb{Q}}}$ is zero. Then $\left(\mathcal{T}^{\mathrm{max},*} \right)_{G_{\mathbb{Q}}}$ is zero by Nakayama's lemma. This completes the proof of the second assertion.

Now we prove the third assertion. We have that $\mathrm{Coker}\left(r_{s, \mathcal{A}} \right)$ is isomorphic to a sub-quotient $\left(U_{\phi} \right)^{\lor}$ by the same proof of \cite[Proposition 5.1 and Proposition 5.2]{Ochiai06}, where $$U_{\phi}=\displaystyle\bigoplus_{l\mid N}\left(\left(\mathbb{T}^{*} \right)_{I_l}[\mathcal{P}]\hat{\otimes}_{\mathbb{Z}_p}\mathbb{Z}_p[[\Gamma]]\left(\tilde{\kappa} \right)\otimes\omega^{-i} \right)^{D_p}.$$Under the assumption that $N$ is square free and $\chi$ is primitive, we have that the automorphic representation $\pi_{l}\left(f_{\phi} \right)$ associated to $f_{\phi}$ is principal series for every $l\mid N$ by \cite[Proposition 4.1.1]{Fuku}. Thus the image of $I_l$ under $G_{\mathbb{Q}} \rightarrow \mathrm{Aut}_{\mathbb{I}}\left(\mathbb{T} \right)$ is a finite group by the proof of \cite[Lemma 2.14]{FO12}. Then for every prime $l \mid N$, there exist a ring of integers $\mathcal{O}_l \subset \mathbb{I}$ of a finite extension of $\mathbb{Q}_p$ and integers $r_l, r^{\prime}_l \in \mathbb{Z}_{\geq 0}$ such that $\left(\mathbb{T}^{*}  \right)_{I_l}\cong \left.\mathbb{I}\right/\left(\varpi\right)^{r_l} \oplus \left.\mathbb{I}\right/\left(\varpi\right)^{r^{\prime}_l}$ by \cite[Theorem 3.3-(1)]{Ochiai06}, where $\varpi$ is a fixed uniformizer of $\mathcal{O}_l$. Thus we have that $\left(\mathbb{T}^{*}  \right)_{I_l}[\mathcal{P}]$ is trivial for every prime $l \mid N$. This completes the proof of Lemma \ref{controlsel}.

\end{proof}

Let $\mathbb{T}$ be a $G_{\mathbb{Q}}$-stable $\mathbb{I}$-free lattice and $\mathcal{A}=\mathcal{T}^{\left(i \right)}\otimes_{\mathcal{R}}\mathcal{R}^{\lor}$. Let $\mathcal{P}=\mathrm{Ker}\left(\phi\right)\mathcal{R}$ be a height-one prime ideal of $\mathcal{R}$ for $\phi \in \mathfrak{X}_{\mathrm{arith}}\left(\mathbb{I} \right)$. We define $H_{\mathrm{ur}}^1\left(D_l, \mathcal{A} \right)$ and $H_{\mathrm{ur}}^1\left(D_l, \mathcal{A}[\mathcal{P}] \right)$ for every prime $l\mid N$ as follows:
\begin{align*}
&H_{\mathrm{ur}}^1\left(D_l, \mathcal{A} \right):=\mathrm{Ker}\left[H^1\left(D_l, \mathcal{A} \right) \rightarrow H^1\left(I_l, \mathcal{A} \right) \right],\\
&H_{\mathrm{ur}}^1\left(D_l, \mathcal{A}[\mathcal{P}] \right):=\mathrm{Ker}\left[H^1\left(D_l, \mathcal{A}[\mathcal{P}] \right) \rightarrow H^1\left(I_l, \mathcal{A}[\mathcal{P}] \right) \right].
\end{align*}
We define $H_{\mathrm{Gr}}^1\left(D_p, \mathcal{A} \right)$ and $H_{\mathrm{Gr}}^1\left(D_p, \mathcal{A}[\mathcal{P}] \right)$ as follows:
\begin{align*}
&H_{\mathrm{Gr}}^1\left(D_p, \mathcal{A} \right):=\mathrm{Ker}\left[H^1\left(D_p, \mathcal{A} \right) \rightarrow H^{1}\left(I_p, \left.\mathcal{A}\right/F^{+}\mathcal{A} \right) \right],\\
&H_{\mathrm{Gr}}^1\left(D_p, \mathcal{A}[\mathcal{P}] \right):=\mathrm{Ker}\left[H^1\left(D_p, \mathcal{A}[\mathcal{P}] \right) \rightarrow H^{1}\left(I_p, \left.\mathcal{A}[\mathcal{P}]\right/F^{+}\mathcal{A}[\mathcal{P}] \right) \right].
\end{align*}
We introduce the following lemma:
\begin{lem}[Ochiai]\label{190207}
Let us keep the assumptions of Theorem \ref{yandongmain}. Then under the assumption that $\mathcal{R}$ is Gorenstein, the following localization maps 
\begin{equation*}
H^{1}\left(\mathbb{Q}_{\Sigma}/\mathbb{Q}, \mathcal{A} \right) \rightarrow \dfrac{H^1\left(D_p, \mathcal{A} \right)}{H_{\mathrm{Gr}}^1\left(D_p, \mathcal{A} \right)} \oplus \displaystyle\bigoplus_{l\mid N}\dfrac{H^1\left(D_l, \mathcal{A} \right)}{H_{\mathrm{ur}}^1\left(D_l, \mathcal{A} \right)}
\end{equation*}
and
\begin{equation*}
H^{1}\left(\mathbb{Q}_{\Sigma}/\mathbb{Q}, \mathcal{A}[\mathcal{P}] \right) \rightarrow \dfrac{H^1\left(D_p, \mathcal{A}[\mathcal{P}] \right)}{H_{\mathrm{Gr}}^1\left(D_p, \mathcal{A}[\mathcal{P}] \right)} \oplus \displaystyle\bigoplus_{l\mid N}\dfrac{H^1\left(D_l, \mathcal{A}[\mathcal{P}] \right)}{H_{\mathrm{ur}}^1\left(D_l, \mathcal{A}[\mathcal{P}] \right)}
\end{equation*}
are surjective.
\end{lem}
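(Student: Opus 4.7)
The plan is to derive both surjectivity statements from Poitou--Tate global duality, reducing each to the vanishing of a suitable strict Selmer group of the Tate dual of $\mathcal{A}$. This follows the Greenberg--Ochiai strategy used in the residually irreducible case (cf.\ \cite[Theorem 3.4]{Ochiai06}), and our main task will be to check that the argument still goes through in the residually reducible setting under the Gorenstein hypothesis on $\mathcal{R}$.

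First I would apply the nine-term Poitou--Tate exact sequence to the discrete $\mathcal{R}[G_{\mathbb{Q},\Sigma}]$-module $\mathcal{A}$, equipped with the local conditions $H^1_{\mathrm{Gr}}(D_p, \mathcal{A})$ at $p$ and $H^1_{\mathrm{ur}}(D_l, \mathcal{A})$ at each $l\mid N$. By Greenberg's standard formulation, the cokernel of the localization map in the statement embeds into the Pontryagin dual of a strict Selmer group $\mathrm{Sel}^{\mathrm{str}}$ attached to the Tate dual $\mathcal{T}^{(i),*}(1):=\mathrm{Hom}_{\mathbb{Z}_p}(\mathcal{A},\mu_{p^{\infty}})$, whose orthogonal local conditions are $F^{+}\mathcal{T}^{(i),*}(1)$ at $p$ and the zero condition at primes $l\mid N$. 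The Gorenstein hypothesis on $\mathcal{R}$ is precisely what allows us to identify $\mathcal{T}^{(i),*}(1)$ (up to twist) with a $G_{\mathbb{Q}}$-stable $\mathcal{R}$-free lattice in the dual representation, so that the Greenberg filtration is respected by the duality. The parallel setup for $\mathcal{A}[\mathcal{P}]$ is identical, with $\mathcal{T}^{(i),*}(1)/\mathcal{P}\mathcal{T}^{(i),*}(1)$ in place of $\mathcal{T}^{(i),*}(1)$; here $\mathcal{P}$ being a height-one prime ensures that the ordinary filtration descends.

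Next I would prove the vanishing of these strict Selmer groups. A non-trivial class would produce a non-trivial $G_{\mathbb{Q}}$-equivariant extension class which is split by $F^{+}$ at $p$ and unramified at every $l\mid N$; dualising back and twisting, such a class forces the existence of a non-trivial $G_{\mathbb{Q}}$-stable sub-quotient of $\mathcal{T}^{(i),*}(1)$ with prescribed local behaviour at $p$. Combining the irreducibility of $\rho_{\mathcal{F}}$ (Theorem \ref{Hida}-(2)) with the $D_p$-distinguished hypothesis, and invoking the non-split structure of the residual representation provided by Lemma \ref{controllattice}, this possibility is ruled out. The same argument applies verbatim to the $\mathcal{P}$-torsion module $\mathcal{A}[\mathcal{P}]$.

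The main obstacle will be this vanishing step. In the residually reducible setting the strict Selmer group is no longer controlled by a bare irreducibility argument at the residue level, and one must use both the Gorenstein hypothesis (to get the module-theoretic duality compatible with the Greenberg local conditions) and the non-split extension structure on $\mathbb{T}^{\mathrm{max}}/\mathfrak{m}_{\mathbb{I}}\mathbb{T}^{\mathrm{max}}$ established in Lemma \ref{controllattice} to exclude the extension classes that would otherwise obstruct surjectivity. Once the vanishing is secured, the surjectivity in the two cases of the lemma follows formally from the Poitou--Tate sequence.
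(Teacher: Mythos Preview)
Your Poitou--Tate setup is the right framework and is indeed how \cite[Corollary 4.12]{Ochiai06} (the result the paper defers to) is organised. The gap is in your vanishing step for the strict Selmer group on the Tate-dual side.

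First, you invoke Lemma \ref{controllattice} to supply a non-split residual structure, but that lemma concerns only the specific lattice $\mathbb{T}^{\mathrm{max}}$ after specialisation at an arithmetic point, whereas Lemma \ref{190207} is stated for an \emph{arbitrary} $G_{\mathbb{Q}}$-stable $\mathbb{I}$-free lattice $\mathbb{T}$. For a general such $\mathbb{T}$ the mod-$\mathfrak{m}_{\mathbb{I}}$ reduction need not be non-split; indeed assumption (Red) says the semi-simplification $\rho_{\mathcal{F}}(\mathfrak{m}_{\mathbb{I}})\cong \mathbf{1}\oplus\overline{\chi}$, so the obstruction you want to exploit is not available. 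Second, the remaining ingredients you cite---global irreducibility of $\rho_{\mathcal{F}}$ over $\mathbb{K}$ and the $D_p$-distinguished hypothesis---do not by themselves force the strict dual Selmer group to vanish: irreducibility over the fraction field says nothing about classes in $H^1$ with coefficients in a compact $\mathcal{R}$-lattice, and $D_p$-distinguished only controls the filtration at $p$, not the global extension problem.

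The paper does not spell out a proof but identifies a different key input that replaces residual irreducibility: the control theorem for Bloch--Kato Selmer groups \cite[Theorem 2.4]{Ochiai00}. In Ochiai's argument the surjectivity is reduced, via Poitou--Tate and specialisation to arithmetic points, to torsionness and control of one-variable Selmer groups; in the residually irreducible case this used irreducibility to handle the $H^0$ terms, and in the present reducible case it is the Bloch--Kato control theorem that makes the same reduction go through. You should replace your extension-class argument by this input.
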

Note that as is mentioned in \cite[Lemma 3.3]{Ochiai08}, the above lemma is proved in the same way as \cite[Corollary 4.12]{Ochiai06}. Although \cite[Corollary 4.12]{Ochiai06} is proved under the assumption that $\rho_{\mathcal{F}}\left(\mathfrak{m}_{\mathbb{I}} \right)$ is irreducible, the control theorem of Bloch-Kato's Selmer groups (cf. \cite[Theorem 2.4]{Ochiai00}) makes the proof feasible in the residually reducible case.

By Lemma \ref{190207}, we deduce the following lemma which is proved in the same way as \cite[Lemma 7.2]{Ochiai06}.

\begin{lem}[Ochiai {\cite[Lemma 7.2]{Ochiai06}}]\label{190110}
Let us keep the assumptions of Theorem \ref{yandongmain}. Let $\left(\mathrm{Sel}_{\mathcal{A}} \right)_{\mathrm{null}}^{\lor}$ the maximal pseudo-null $\mathcal{R}$-submodule of $\left(\mathrm{Sel}_{\mathcal{A}} \right)^{\lor}$. Let $\mathcal{P}=\mathrm{Ker}\left(\phi\right)\mathcal{R}$ be a height-one prime ideal for some $\phi \in \mathfrak{X}_{\mathrm{arith}}\left(\mathbb{I} \right)$. Assume that $N$ is square-free, then $\left(\mathrm{Sel}_{\mathcal{A}} \right)_{\mathrm{null}}^{\lor}/\mathcal{P}\left(\mathrm{Sel}_{\mathcal{A}} \right)_{\mathrm{null}}^{\lor}$ is a pseudo-null $\left.\mathcal{R}\right/\mathcal{P}$-module. 
\end{lem}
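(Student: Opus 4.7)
The plan is to follow the strategy of Ochiai's original proof \cite[Lemma 7.2]{Ochiai06}, adapting it to the residually reducible setting by exploiting Lemma \ref{controlsel} and Lemma \ref{190207}. Set $M := (\mathrm{Sel}_{\mathcal{A}})^{\lor}$ and $M_{\mathrm{null}} := (\mathrm{Sel}_{\mathcal{A}})_{\mathrm{null}}^{\lor}$. Since $\mathcal{R} \cong \mathcal{O}[[X,T]]$ is a regular local ring of dimension three, every height-one prime is principal, so we may write $\mathcal{P} = (f)$ for some $f \in \mathcal{R}$.

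First, I would apply the snake lemma to multiplication by $f$ on the short exact sequence $0 \to M_{\mathrm{null}} \to M \to M/M_{\mathrm{null}} \to 0$, yielding the six-term exact sequence
\[
0 \to M_{\mathrm{null}}[f] \to M[f] \to (M/M_{\mathrm{null}})[f] \to M_{\mathrm{null}}/fM_{\mathrm{null}} \to M/fM \to (M/M_{\mathrm{null}})/f(M/M_{\mathrm{null}}) \to 0.
\]
This realises $M_{\mathrm{null}}/fM_{\mathrm{null}}$ as an extension of a submodule of $M/fM$ by a quotient of $(M/M_{\mathrm{null}})[f]$. By Pontryagin duality, $M/fM = M/\mathcal{P}M$ is the dual of $\mathrm{Sel}_{\mathcal{A}}[\mathcal{P}]$. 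Using Lemma \ref{controlsel} one identifies this with $(\mathrm{Sel}_{\mathcal{A}[\mathcal{P}]})^{\lor}$, up to cokernels coming from the local terms $H^{1}(I_l, \mathcal{A}[\mathcal{P}])$ and $H^{1}(I_p, \mathcal{A}[\mathcal{P}]/F^{+}\mathcal{A}[\mathcal{P}])$, which are controlled via Lemma \ref{190207}. The one-variable Selmer dual $(\mathrm{Sel}_{\mathcal{A}[\mathcal{P}]})^{\lor}$ is a finitely generated torsion module over the two-dimensional regular local ring $\mathcal{R}/\mathcal{P}$, so its maximal pseudo-null $\mathcal{R}/\mathcal{P}$-submodule is of finite length.

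Second, I would exploit the maximality of $M_{\mathrm{null}}$: the quotient $M/M_{\mathrm{null}}$ contains no pseudo-null $\mathcal{R}$-submodule. In particular, the $f$-torsion $(M/M_{\mathrm{null}})[f]$ is either zero or has pure codimension-one support in $\mathcal{R}$, concentrated at those height-one primes appearing in the characteristic divisor of $M/M_{\mathrm{null}}$ that contain $f$. Both of these height-one supports descend to the zero ideal of $\mathcal{R}/\mathcal{P}$ or to the maximal ideal, depending on whether $\mathcal{P}$ itself appears. Combined with the snake lemma sequence above, this forces the kernel of $M_{\mathrm{null}}/fM_{\mathrm{null}} \to M/fM$ to be pseudo-null over $\mathcal{R}/\mathcal{P}$.

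The hard part, which I expect to be the main obstacle, is showing that the image of $M_{\mathrm{null}}/fM_{\mathrm{null}}$ inside $M/fM$ is itself pseudo-null over $\mathcal{R}/\mathcal{P}$; equivalently, one must rule out any height-two prime $\mathfrak{Q}$ of $\mathcal{R}$ in $\mathrm{Supp}_{\mathcal{R}}(M_{\mathrm{null}})$ lying above $\mathcal{P}$, since such a prime would descend to a non-pseudo-null height-one prime of $\mathcal{R}/\mathcal{P}$. Here one uses that the local conditions defining $\mathrm{Sel}_{\mathcal{A}}$ are particularly well-behaved under the assumption (Tame) that $N$ is square-free: by the analysis in the proof of Lemma \ref{controlsel}-(3), for every prime $l\mid N$ the inertia coinvariants $(\mathbb{T}^{*})_{I_l}$ are $\mathbb{I}$-torsion supported only at finitely many height-one primes determined by the automorphic type of $\pi_l(f_{\phi})$, and none of these specialise to the generic fibre coming from an arithmetic $\phi$. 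Together with the surjectivity of localisation in Lemma \ref{190207}, this bounds the support of the pseudo-null part of $M/\mathcal{P}M$ to the maximal ideal of $\mathcal{R}/\mathcal{P}$, completing the proof.
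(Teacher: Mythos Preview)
Your approach coincides with the paper's: both defer to Ochiai's original argument in \cite[Lemma~7.2]{Ochiai06}, with the surjectivity of the localisation maps (Lemma~\ref{190207}, playing the role of \cite[Corollary~4.12]{Ochiai06}) as the key input. The paper does not supply an independent proof; it simply remarks that Ochiai's deduction of \cite[Lemma~7.2]{Ochiai06} from \cite[Corollary~4.12]{Ochiai06} goes through verbatim once one drops the residual-irreducibility hypothesis, and adds only the observation that the square-free hypothesis on $N$ kills the module $U_{\phi}$ appearing in the proof of Lemma~\ref{controlsel}---exactly the point you make in your final paragraph. Your snake-lemma reconstruction is consistent with this strategy, and since the paper itself gives no further details there is nothing finer to compare.
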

In \cite{Ochiai06}, we deduce \cite[Lemma 7.2]{Ochiai06} from \cite[Corollary 4.12]{Ochiai06} in the situation when the residual representation is irreducible. The exactly same argument works even if we remove the assumption of the residually irreducibility. Note that the assumption that $N$ is square-free implies that the module $U_{\phi}$ in the proof of Proposition \ref{controlsel} is trivial.

When $N=1$, we have the following result by Ochiai:
\begin{prp}[Ochiai {\cite[Proposition 8.1]{Ochiai06}}]\label{Ochiainonps}
Suppose that $\mathbb{I}$ is a regular local ring. Let $\mathbb{T}$ be a $G_{\mathbb{Q}}$-stable $\mathbb{I}$-free lattice and $\mathcal{A}=\mathcal{T}^{\left(i \right)}\otimes_{\mathcal{R}}\mathcal{R}^{\lor}$. Assume $N=1$, then $\left(\mathrm{Sel}_{\mathcal{A}} \right)^{\lor}$ has no non-trivial pseudo-null $\mathcal{R}$-submodule. 

\end{prp}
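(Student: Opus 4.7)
The plan is to follow Ochiai's original strategy for the residually irreducible case, but to verify that the argument still carries through when $\rho_{\mathcal{F}}(\mathfrak{m}_{\mathbb{I}})$ is reducible. The two main inputs will be the surjectivity of Lemma \ref{190207} and a theorem of Jannsen saying that, for a free module over a regular ring, the second Iwasawa cohomology of $\mathbb{Q}_{\Sigma}/\mathbb{Q}$ has no nonzero pseudo-null submodule. The hypothesis $N=1$ enters through the fact that then $\Sigma = \{p, \infty\}$, so no local conditions at tame primes need to be analysed.

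First, since $N=1$, Lemma \ref{190207} promotes the defining inclusion of the Selmer group into a short exact sequence
\[
0 \to \mathrm{Sel}_{\mathcal{A}} \to H^1(\mathbb{Q}_\Sigma/\mathbb{Q},\mathcal{A}) \to \frac{H^1(D_p,\mathcal{A})}{H^1_{\mathrm{Gr}}(D_p,\mathcal{A})} \to 0.
\]
Note that $\mathcal{R}=\mathbb{I}[[\Gamma]]\cong\mathcal{O}[[X,Y]]$ is a regular local ring since $\mathbb{I}\cong\mathcal{O}[[X]]$ and $\Gamma\cong\mathbb{Z}_p$, and $\mathcal{T}^{(i)}$ is $\mathcal{R}$-free by hypothesis. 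Taking Pontryagin duals and running Poitou-Tate global duality for $\mathcal{T}^{(i)}$ over $\mathcal{R}$ (in the form of Jannsen's spectral sequence or, equivalently, Nekov\'a\v{r}'s Selmer complex formalism), I would identify $(\mathrm{Sel}_{\mathcal{A}})^{\lor}$ as an extension whose quotient part embeds into $H^2_{\mathrm{Iw}}(\mathbb{Q}_\Sigma/\mathbb{Q},\mathcal{T}^{(i)})$ (up to a Tate twist) and whose sub-part is identified with a subquotient of the local Iwasawa cohomology $H^1_{\mathrm{Iw}}(\mathbb{Q}_p, F^{+}\mathcal{T}^{(i)})$.

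The key inputs are then twofold. On the global side, Jannsen's theorem gives that $H^2_{\mathrm{Iw}}(\mathbb{Q}_\Sigma/\mathbb{Q},\mathcal{T}^{(i)})$ has no nonzero pseudo-null $\mathcal{R}$-submodule, because $\mathcal{T}^{(i)}$ is free over the regular ring $\mathcal{R}$. On the local side, Lemma \ref{FO} guarantees that $F^{+}\mathbb{T}$ is $\mathbb{I}$-free of rank one, so $F^{+}\mathcal{T}^{(i)}$ is $\mathcal{R}$-free of rank one; a standard local Tate-duality argument then shows that $H^{1}_{\mathrm{Iw}}(\mathbb{Q}_p,F^{+}\mathcal{T}^{(i)})$ also carries no nonzero pseudo-null $\mathcal{R}$-submodule. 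Assembling these facts through the extension gives the desired conclusion: $(\mathrm{Sel}_{\mathcal{A}})^{\lor}$ itself has no nonzero pseudo-null $\mathcal{R}$-submodule.

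The main obstacle is the bookkeeping in Poitou-Tate duality over $\mathcal{R}$: Pontryagin duality swaps discrete and compact objects and one must correctly track the $\mathrm{Ext}^{i}_{\mathcal{R}}(-,\mathcal{R})$ terms that appear in Jannsen's spectral sequence, in particular verifying that the extension produced by dualising the localisation map does not spuriously introduce pseudo-null pieces. A secondary subtlety specific to the residually reducible setting is that $H^0(\mathbb{Q}_\Sigma/\mathbb{Q},\mathcal{A})$ need not vanish, so the correction terms coming from global $H^0$ must be confirmed to contribute only to the saturated part and not a pseudo-null piece; this is ultimately controlled by the surjectivity in Lemma \ref{190207}. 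Finally, the hypothesis $N=1$ is indispensable: if $l\mid N$, then $H^1(I_l,\mathcal{A})^{\lor}$ typically contains pseudo-null submodules arising from inertial invariants, and the present method offers no way to remove them without further hypotheses on $\mathbb{T}$.
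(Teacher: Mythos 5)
The paper does not contain a proof of Proposition~\ref{Ochiainonps}: it is cited verbatim from Ochiai \cite[Proposition~8.1]{Ochiai06} with no argument supplied and no remark on whether any residual-irreducibility hypothesis present in Ochiai's original argument needs to be addressed (contrast the explicit remarks the author makes just before this statement about Lemma~\ref{190207} and Lemma~\ref{190110}, where it is noted that Ochiai's proofs, given under residual irreducibility, still go through here). There is therefore no in-paper proof to weigh your proposal against; what you have written is a reconstruction of an argument from a \emph{different} reference, not a match or a deviation from anything in the present paper.

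As a reconstruction, the high-level ingredients you identify — $N=1$ killing tame local terms, the surjectivity of localization supplied by Lemma~\ref{190207}, Jannsen's theorem that $H^2_{\mathrm{Iw}}$ of a free module over a regular ring has no nonzero pseudo-null submodule, and freeness of $F^{+}\mathbb{T}$ via Lemma~\ref{FO} — are plausible inputs for Ochiai's proof. But the structural description of $(\mathrm{Sel}_{\mathcal{A}})^{\lor}$ is under-specified. Pontryagin dualizing
$0 \to \mathrm{Sel}_{\mathcal{A}} \to H^1(\mathbb{Q}_\Sigma/\mathbb{Q},\mathcal{A}) \to H^1(D_p,\mathcal{A})/H^1_{\mathrm{Gr}}(D_p,\mathcal{A}) \to 0$
makes $(\mathrm{Sel}_{\mathcal{A}})^{\lor}$ a \emph{quotient}, not an extension; the extension you assert (with quotient embedding into $H^2_{\mathrm{Iw}}(\mathcal{T}^{(i)})$ and sub-part coming from local data) requires running the full nine-term Poitou--Tate sequence, not just dualizing the defining sequence. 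Further, describing the sub-part as merely ``a subquotient'' of $H^1_{\mathrm{Iw}}(\mathbb{Q}_p,F^{+}\mathcal{T}^{(i)})$ is too weak for your concluding argument: a subquotient of a module with no nonzero pseudo-null submodules can perfectly well acquire pseudo-null submodules. You would need to pin the sub-part down as a \emph{submodule} (or otherwise argue directly) before the two pieces assemble to give the conclusion, so the final ``assembling these facts'' step still has a genuine gap.
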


Under the above preparation, we return to the proof of Theorem \ref{910115}.
\begin{proof}[Proof of Theorem \ref{910115}]
Let $\mathbb{T}^{\mathrm{max}}$ be the lattice in \eqref{12011}. By the proof of Theorem \ref{yandongmain}-(3), we have 
\begin{equation}\label{10284}
\left(L_p^{\mathrm{alg}}\left(\mathcal{T}^{\mathrm{max}, \left(0 \right)} \right) \right)=\left(L_p^{\mathrm{alg}}\left(\mathcal{T}^{\mathrm{min}, \left(0 \right)} \right) \right)\mathcal{J}^{**}.
\end{equation}
Then under the assumption ($p$Four), we have 
\begin{equation}\label{1903291}
\left(L_p^{\mathrm{alg}}\left(\mathcal{T}^{\mathrm{max}, \left(0 \right)} \right) \right)\subset\left(a\left(p, \mathcal{F} \right)-1 \right).
\end{equation}
Let $\mathcal{P}=\mathrm{Ker}\left(\phi\right)\mathcal{R}$ for some $\phi \in \mathfrak{X}_{\mathrm{arith}}\left(\mathbb{I} \right)$. By Lemma \ref{controlsel}, the following map 
\begin{equation*}
r_{s, \mathcal{A}^{\mathrm{max}}}: \mathrm{Sel}_{\mathcal{A}^{\mathrm{max}}[\mathcal{P}]} \rightarrow \mathrm{Sel}_{\mathcal{A}^{\mathrm{max}}}[\mathcal{P}]
\end{equation*}
is an isomorphism. Then under the assumption (Tame), the image of $\mathrm{char}_{\mathcal{R}}\left(\mathrm{Sel}_{\mathcal{A}^{\mathrm{max}}}\right)^{\lor}$ under $\mathcal{R} \twoheadrightarrow \left.\mathcal{R}\right/\mathcal{P}$ is equal to $\mathrm{char}_{\left.\mathcal{R}\right/\mathcal{P}}\left(\mathrm{Sel}_{\mathcal{A}^{\mathrm{max}}[\mathcal{P}]}  \right)^{\lor}$ by Lemma \ref{190110} and Proposition \ref{Ochiainonps}. 

Now we study $\mathrm{char}_{\left.\mathcal{R}\right/\mathcal{P}}\left(\mathrm{Sel}_{\mathcal{A}^{\mathrm{max}}[\mathcal{P}]}  \right)^{\lor}$. Let $T_{\phi}^{\mathrm{max}}=\mathbb{T}\otimes_{\mathbb{I}}\phi\left(\mathbb{I}\right)$ and $\widetilde{T_{\phi}}^{\mathrm{max}, \left(0\right)}=T_{\phi}^{\mathrm{max}}\otimes_{\mathbb{Z}_p}\mathbb{Z}_p[[\Gamma]]\left(\tilde{\kappa}^{-1} \right)$. By Lemma \ref{controllattice} we have that $\left.\mathcal{T}^{\mathrm{max}, \left(0 \right)}\right/\mathcal{P}\mathcal{T}^{\mathrm{max}, \left(0 \right)}$ is isomorphic to $\widetilde{T_{\phi}}^{\mathrm{max}, \left(0\right)}$. Under the assumption ($D_p$-dist), we have that $F^{+}\mathbb{T}^{\mathrm{max}}$ is a direct summand of $\mathbb{T}^{\mathrm{max}}$ by Lemma \ref{FO}. Thus $\mathrm{Sel}_{\mathcal{A}^{\mathrm{max}}[\mathcal{P}]}$ is isomorphic to $\mathrm{Sel}_{\widetilde{A}^{\mathrm{max}}}$, where $\widetilde{A}^{\mathrm{max}}=\widetilde{T_{\phi}}^{\mathrm{max}, \left(0\right)}\otimes_{\Lambda_{\phi\left(\mathbb{I}\right)}^{\mathrm{cyc}}}{\Lambda_{\phi\left(\mathbb{I}\right)}^{\mathrm{cyc}}}^{\lor}$. Then by Proposition \ref{RGde}, Lemma \ref{controllattice} and a calculation of Bella\"iche-Pollack \cite[Theorem 5.12]{BP}, we have the following equality 
\begin{equation}\label{10281}
\mathrm{char}_{\left.\mathcal{R}\right/\mathcal{P}}\left(\mathrm{Sel}_{\mathcal{A}^{\mathrm{max}}[\mathcal{P}]}  \right)^{\lor}=\left(a\left(p, f_{\phi} \right)-1\right).
\end{equation}
Thus by \eqref{1903291}, we have 
\begin{equation*}
\left(L_p^{\mathrm{alg}}\left(\mathcal{T}^{\mathrm{max}, \left(0 \right)} \right) \right)=\left(a\left(p, \mathcal{F} \right)-1 \right)=\mathcal{J}.
\end{equation*}
Thus $L_p^{\mathrm{alg}}\left(\mathcal{T}^{\mathrm{min}, \left(0 \right)}\right)$ is a unit of $\mathcal{R}$ follows by \eqref{10284}. This completes the proof of Theorem \ref{910115}. 
\end{proof}

We give an example at the end of this paper. Let $\Delta \in S_{12}\left(\mathrm{SL}_2\left(\mathbb{Z} \right) \right)$ be the Ramanujan's cusp form whose $q$-expansion is equal to $q\displaystyle\prod_{n=1}\left(1-q^n \right)^{24}$. Assume that $\Delta$ is $p$-ordinary (the only known primes at which $\Delta$ is non-ordinary are $p=2, 3, 5, 7, 2411$ and $7758337633$). We have that $S_{12}\left(\mathrm{SL}_2\left(\mathbb{Z} \right) \right)$ is a rank one $\mathbb{Z}_p$-module. Then by Hida's control theorem of Hecke algebra, the condition ($\Lambda$) holds for $\mathcal{F}_{\Delta}$ and there exists a unique $\Lambda$-adic normalized eigen cusp form $\mathcal{F}_{\Delta}=\mathcal{F}_{\Delta}\left(X, q \right) \in \Lambda[[q]]$ such that $\mathcal{F}_{\Delta}\left(u^{10}, q \right)$ is the $p$-stabilization of $\Delta$ (cf. \cite[\S 7.6]{H3}). By abuse of notation, we denote by the same symbol $\Delta$ for its $p$-stabilization.

The only one prime $p$ such that the residue representation of $\rho_{\mathcal{F}_{\Delta}}\left(\mathfrak{m}_{\Lambda}\right)$ is reducible is equal to $691$, which is fixed from now on to the end of this section. It is well-known that the Kubota-Leopoldt $p$-adic $L$-function $\mathcal{L}_p\left(\omega^{11}; \gamma^{\prime} \right)$ is a prime ideal of $\Lambda$. In our previous paper \cite[\S 4]{Y}, we proved $\sharp\mathscr{L}\left(  \rho_{\mathcal{F}_{\Delta}} \right)=\infty$. However by our main theorem, we have $\sharp\mathscr{L}^{\mathrm{fr}}\left(  \rho_{\mathcal{F}_{\Delta}} \right)=2$, i.e. there are exactly two $G_{\mathbb{Q}}$-stable $\Lambda$-free lattices $\mathbb{T}^{\mathrm{min}}$ and $\mathbb{T}^{\mathrm{max}}$ up to $G_{\mathbb{Q}}$-isomorphism. Moreover we have 
\begin{equation*}
\mathscr{L}_p^{\mathrm{alg}}\left(\rho_{\mathcal{F}_{\Delta}}^{\mathrm{n.ord}, \left(i \right)} \right)=
\begin{cases}
\left\{ L_p^{\mathrm{alg}}\left(\mathcal{T}^{\mathrm{min}, \left(i \right)} \right) \right\}& \left(i: \text{odd} \right) \\
\left\{ L_p^{\mathrm{alg}}\left(\mathcal{T}^{\mathrm{min}, \left(i \right)} \right), L_p^{\mathrm{alg}}\left(\mathcal{T}^{\mathrm{max}, \left(i \right)} \right) \right\}& \left(i: \text{even} \right).
\end{cases}
\end{equation*}
Now let $i \equiv 0 \pmod{p-1}$. In \cite[Appendix I]{M11}, Mazur computed the following equality of ideals $$\left(\mathcal{L}_p\left(\omega^{11}; \gamma^{\prime} \right)\right)=\left(a\left(p, \mathcal{F} \right)-1 \right).$$ Thus by Theorem \ref{910115}, we have that $L_p^{\mathrm{alg}}\left(\mathcal{T}^{\mathrm{min}, \left(0 \right)} \right)$ is a unit of $\mathcal{R}$ and $L_p^{\mathrm{alg}}\left(\mathcal{T}^{\mathrm{max}, \left(0 \right)} \right)=\mathcal{L}_p\left(\omega^{11}; \gamma^{\prime} \right)$ up to multiplying by elements of $\mathcal{R}^{\times}$.

Thus we calculated the set $\mathscr{L}_p^{\mathrm{alg}}\left(\rho_{\mathcal{F}_{\Delta}}^{\mathrm{n.ord}, \left(i \right)} \right)$ of two-variable algebraic $p$-adic $L$-functions when $i \equiv 0 \pmod{p-1}$. In a forthcoming paper, we will calculate $\mathscr{L}_p^{\mathrm{alg}}\left(\rho_{\mathcal{F}_{\Delta}}^{\mathrm{n.ord}, \left(i \right)} \right)$ for more general $i$.

\appendix
\section{The graph structure on $\mathscr{C}^{\mathrm{fr}}\left(\rho_{\mathcal{F}} \right)$}
Let $A$ be a complete discrete valuation ring and $\rho: G \rightarrow \mathrm{GL}_2\left(A \right)$ a continuous representation of a compact group $G$. Then one can define a graph structure on $\mathscr{C}\left(\rho \right)$ in the sense of Serre \cite[Chap. II, \S 1]{SeTe} in which we have that the graph $\mathscr{C}\left(\rho \right)$ is a tree. Then the tree $\mathscr{C}\left(\rho \right)$ was studied more deeply by Bella\"iche-Chenevier \cite{BC14}. 

In this appendix, we study the case when $\rho$ comes from a Hida deformation i.e. $A$ becomes a domain of Krull dimension two, which is not a discrete valuation ring. We give a graph structure on $\mathscr{C}^{\mathrm{fr}}\left(\rho_{\mathcal{F}} \right)$ by using the arguments in \S 4.3. Recall that we have $\sharp\mathscr{L}^{\mathrm{fr}}\left(\rho_{\mathcal{F}} \right)=\sharp\mathscr{C}^{\mathrm{fr}}\left(\rho_{\mathcal{F}} \right)$ by Theorem \ref{yandongmain}-(1). Thus the graph structure enables us to understand the set of isomorphic classes of $G_{\mathbb{Q}}$-stable free lattices more concretely. We keep the assumptions and the notation of Theorem \ref{yandongmain} throughout this section. First we give some lemmas as preparation.
\begin{lem}\label{neighbor}
Let us keep the assumptions and the notation of Theorem \ref{yandongmain}. Let $\mathbb{T}^{\prime} \subset \mathbb{T}$ be $G_{\mathbb{Q}}$-stable $\mathbb{I}$-free lattices of $\mathbb{V}_{\mathcal{F}}$ such that $\left.\mathbb{T}\right/{\mathbb{T}^{\prime}} \cong \left.\mathbb{I}\right/{\mathfrak{p}}$ for some $\mathfrak{p} \in P^1\left(\mathbb{I}\right)$. Then $\left.\mathbb{T}^{\prime}\right/{\mathfrak{p}\mathbb{T}} \cong \left.\mathbb{I}\right/{\mathfrak{p}}$. 
\end{lem}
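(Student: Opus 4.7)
The plan is to produce an $\mathbb{I}$-basis of $\mathbb{T}$ adapted to the inclusion $\mathbb{T}' \subset \mathbb{T}$ and then read off the quotient directly. Since $\mathbb{I}$ is a two-dimensional regular local ring and hence a UFD, the height-one prime $\mathfrak{p}$ is principal; write $\mathfrak{p} = (\pi)$. The hypothesis $\mathbb{T}/\mathbb{T}' \cong \mathbb{I}/\mathfrak{p}$ in particular implies $\mathfrak{p}\mathbb{T} \subset \mathbb{T}'$, so the quotient $\mathbb{T}'/\mathfrak{p}\mathbb{T}$ makes sense.

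First I would pick a lift $t \in \mathbb{T}$ of a generator of the cyclic $\mathbb{I}$-module $\mathbb{T}/\mathbb{T}' \cong \mathbb{I}/\mathfrak{p}$ and verify that $t \notin \mathfrak{m}_{\mathbb{I}}\mathbb{T}$. This is seen by reducing the surjection $\mathbb{T} \twoheadrightarrow \mathbb{T}/\mathbb{T}'$ modulo $\mathfrak{m}_{\mathbb{I}}$: since $\mathfrak{p} \subset \mathfrak{m}_{\mathbb{I}}$ the target becomes $\mathbb{I}/\mathfrak{m}_{\mathbb{I}}$, and the image of $t$ there is a unit, so a fortiori the image of $t$ in $\mathbb{T}/\mathfrak{m}_{\mathbb{I}}\mathbb{T}$ is nonzero. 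Nakayama's lemma then lets me extend $e_1 := t$ to an $\mathbb{I}$-basis $\{e_1, e_2\}$ of the free rank-two module $\mathbb{T}$. By subtracting a suitable $\mathbb{I}$-multiple of $e_1$ from $e_2$ (which preserves the basis property), I may further assume $e_2 \in \mathbb{T}'$.

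With this adapted basis in hand, an elementary check gives $\mathbb{T}' = \mathbb{I}\pi e_1 \oplus \mathbb{I} e_2$: the inclusion $\supset$ is clear, and conversely any $\alpha e_1 + \beta e_2 \in \mathbb{T}'$ must satisfy $\alpha \equiv 0 \pmod{\mathfrak{p}}$ by inspecting its image in $\mathbb{T}/\mathbb{T}' \cong (\mathbb{I}/\mathfrak{p})\cdot \bar{e}_1$. Since $\mathfrak{p}\mathbb{T} = \mathbb{I}\pi e_1 \oplus \mathbb{I}\pi e_2$, the quotient $\mathbb{T}'/\mathfrak{p}\mathbb{T}$ is visibly isomorphic to $\mathbb{I} e_2 / \mathbb{I}\pi e_2 \cong \mathbb{I}/\mathfrak{p}$, as claimed.

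The whole argument is pure commutative algebra and uses no Galois-theoretic input beyond $G_{\mathbb{Q}}$-stability being built into the hypothesis on $\mathbb{T}, \mathbb{T}'$. The only step requiring any care is the Nakayama argument producing the lift $t$ as part of an $\mathbb{I}$-basis of $\mathbb{T}$, which is where the freeness of $\mathbb{T}$ and the inclusion $\mathfrak{p} \subset \mathfrak{m}_{\mathbb{I}}$ enter. I do not anticipate a real obstacle; the statement is essentially the two-dimensional analogue of the ``neighbor'' behaviour of stable lattices over a discrete valuation ring used in Proposition \ref{22}.
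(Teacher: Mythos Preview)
Your argument is correct. It is essentially an elementary-divisor computation: you produce an adapted basis $\{e_1,e_2\}$ of the free rank-two module $\mathbb{T}$ with $\mathbb{T}'=\pi\mathbb{I}e_1\oplus\mathbb{I}e_2$, and the conclusion is immediate. The only hypotheses you use are that $\mathbb{I}$ is a local UFD (so $\mathfrak{p}=(\pi)$) and that $\mathbb{T}$ is free of rank two; no Galois input is needed.

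The paper takes a genuinely different route. Instead of choosing a basis, it exploits the $D_p$-filtration $0\to F^{+}\mathbb{T}\to\mathbb{T}\to F^{-}\mathbb{T}\to 0$ together with Lemma~\ref{FO} (freeness of $F^{\pm}\mathbb{T}$) and the assumption ($D_p$-dist). The snake lemma applied to the diagram for $\mathbb{T}'\subset\mathbb{T}$ forces the dichotomy $F^{+}\mathbb{T}=F^{+}\mathbb{T}'$ or $F^{-}\mathbb{T}=F^{-}\mathbb{T}'$, and in either case a second diagram with $\mathfrak{p}\mathbb{T}\subset\mathbb{T}'$ yields $\mathbb{T}'/\mathfrak{p}\mathbb{T}\cong\mathbb{I}/\mathfrak{p}$. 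Your approach is more elementary and more general (it proves the statement for arbitrary free rank-two lattices over a local UFD), while the paper's approach, though heavier, records the extra information that exactly one of $F^{+}\mathbb{T}'=F^{+}\mathbb{T}$, $F^{-}\mathbb{T}'=F^{-}\mathbb{T}$ holds---and this dichotomy is precisely what is invoked at the start of the proof of Lemma~\ref{neighbor well def}. If you intend to prove that subsequent lemma as well, you would need to recover this filtration information separately.
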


\begin{proof}
Let us consider the following commutative diagram
\begin{equation}\label{Keycom1220}
 \xymatrix{
   0 \ar[r]   & F^{+}\mathbb{T}^{\prime} \ar[r] \ar@{^{(}-_>}[d] & \mathbb{T}^{\prime} \ar[r] \ar@{^{(}-_>}[d] & F^{-}\mathbb{T}^{\prime} \ar[r] \ar[d] & 0 \\
    0 \ar[r] & F^{+}\mathbb{T} \ar[r] & \mathbb{T} \ar[r] & F^{-}\mathbb{T} \ar[r] & 0.
  }
\end{equation}
By combining the assumption ($D_p$-dist) and the snake lemma, we have 
\begin{equation*}
\mathrm{Ker}\left(F^{-}\mathbb{T}^{\prime} \rightarrow F^{-}\mathbb{T}   \right)=0
\end{equation*}
and the following exact sequence
\begin{equation}\label{12201}
0 \rightarrow \left.F^{+}\mathbb{T}\right/{F^{+}\mathbb{T}^{\prime}} \rightarrow \left.\mathbb{T}\right/{\mathbb{T}^{\prime}} \rightarrow \left.F^{-}\mathbb{T}\right/{F^{-}\mathbb{T}^{\prime}} \rightarrow 0.
\end{equation}
Since $\left.\mathbb{T}\right/{\mathbb{T}^{\prime}}$ is a cyclic $\mathbb{I}$-module, by the assumption ($D_p$-dist), we must have $F^{+}\mathbb{T}=F^{+}\mathbb{T}^{\prime}$ or $F^{-}\mathbb{T}=F^{-}\mathbb{T}^{\prime}$. 

Let us consider the condition when $F^{+}\mathbb{T}=F^{+}\mathbb{T}^{\prime}$. Then $\left.F^{-}\mathbb{T}\right/{F^{-}\mathbb{T}^{\prime}}$ is isomorphic to $\left.\mathbb{I}\right/{\mathfrak{p}}$ by \eqref{12201}. Recall that the $\mathbb{I}$-modules $F^{+}\mathbb{T}, F^{-}\mathbb{T}, F^{+}\mathbb{T}^{\prime}$ and $F^{-}\mathbb{T}^{\prime}$ are free of rank one by Lemma \ref{FO}. Then $\mathfrak{p}F^{-}\mathbb{T}=F^{-}\mathbb{T}^{\prime}$. Thus by the following commutative diagram
\begin{equation*}
 \xymatrix{
    0 \ar[r] & \mathfrak{p}F^{+}\mathbb{T} \ar[r] \ar@{^{(}-_>}[d] & \mathfrak{p}\mathbb{T} \ar[r] \ar@{^{(}-_>}[d] & \mathfrak{p}F^{-}\mathbb{T} \ar[r] \ar@{=}[d] & 0 \\
    0 \ar[r] & F^{+}\mathbb{T}^{\prime} \ar[r] & \mathbb{T}^{\prime} \ar[r] & F^{-}\mathbb{T}^{\prime} \ar[r] & 0,
  }
\end{equation*}
we have 
\begin{equation*}
\left.\mathbb{T}^{\prime}\right/{\mathfrak{p}\mathbb{T}}\stackrel{\sim}{\rightarrow} \left.F^{+}\mathbb{T}^{\prime}\right/{\mathfrak{p}F^{+}\mathbb{T}} \stackrel{\sim}{\rightarrow} \left.\mathbb{I}\right/{\mathfrak{p}}.
\end{equation*}
When $F^{-}\mathbb{T}=F^{-}\mathbb{T}^{\prime}$, by the same argument, we have 
\begin{equation*}
\left.\mathbb{T}^{\prime}\right/{\mathfrak{p}\mathbb{T}}\stackrel{\sim}{\rightarrow} \left.F^{-}\mathbb{T}^{\prime}\right/{\mathfrak{p}F^{-}\mathbb{T}}\stackrel{\sim}{\rightarrow}\left.\mathbb{I}\right/{\mathfrak{p}}.
\end{equation*}
This completes the proof.
\end{proof}
\begin{lem}\label{neighbor well def}
Let us keep the assumptions and the notation of Theorem \ref{yandongmain}. Let $\mathbb{T}^{\prime} \subset \mathbb{T}$ be $G_{\mathbb{Q}}$-stable $\mathbb{I}$-free lattices such that $\left.\mathbb{T}\right/{\mathbb{T}^{\prime}} \stackrel{\sim}{\rightarrow} \left.\mathbb{I}\right/{\mathfrak{p}}$ for some $\mathfrak{p} \in P^1\left(\mathbb{I} \right)$. Let $x, y$ be elements of $\mathbb{K}^{\times}$ and $\mathfrak{a}$ a principal integral ideal of $\mathbb{I}$. We have the following statements.
\begin{enumerate}
\item[(1)]Suppose $y\mathbb{T}^{\prime} \subset x\mathbb{T}$ and $\left.x\mathbb{T}\right/{y\mathbb{T}^{\prime}} \stackrel{\sim}{\rightarrow} \left.\mathbb{I}\right/{\mathfrak{a}}$, then $\mathfrak{a}=\mathfrak{p}$ and $\left(x \right)=\left(y \right)$ as fractional ideals.
\item[(2)]Suppose $x\mathbb{T} \subset y\mathbb{T}^{\prime}$ and $\left.y\mathbb{T}^{\prime}\right/{x\mathbb{T}} \stackrel{\sim}{\rightarrow} \left.\mathbb{I}\right/{\mathfrak{a}}$, then $\mathfrak{a}=\mathfrak{p}$ and $\left(x \right)=\left(y \right)\mathfrak{p}$ as fractional ideals.
\end{enumerate}

\end{lem}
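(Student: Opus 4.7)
The plan is to reduce both assertions to computations at each height-one prime $\mathfrak{q} \in P^1(\mathbb{I})$, using that $\mathbb{I}_{\mathfrak{q}}$ is a discrete valuation ring and that $\mathbb{I}$ (being regular local, hence a UFD) is determined by its localizations at height-one primes. In both parts I would set $w=y/x$ (resp.\ $w=x/y$) and reformulate: in (1) the hypothesis becomes $w\mathbb{T}^{\prime}\subset\mathbb{T}$ with $\mathbb{T}/w\mathbb{T}^{\prime}\cong\mathbb{I}/\mathfrak{a}$; in (2) it becomes $w\mathbb{T}\subset\mathbb{T}^{\prime}$ with $\mathbb{T}^{\prime}/w\mathbb{T}\cong\mathbb{I}/\mathfrak{a}$. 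The claim then translates to $(w)=(1)$ (resp.\ $(w)=\mathfrak{p}$) and $\mathfrak{a}=\mathfrak{p}$.

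For any $\mathfrak{q}\neq\mathfrak{p}$ we have $(\mathbb{T}/\mathbb{T}^{\prime})_{\mathfrak{q}}=0$, hence $\mathbb{T}_{\mathfrak{q}}=\mathbb{T}^{\prime}_{\mathfrak{q}}$, so the inclusion becomes $w\mathbb{T}_{\mathfrak{q}}\subset\mathbb{T}_{\mathfrak{q}}$ (or the reverse after swapping), which forces $w\in\mathbb{I}_{\mathfrak{q}}$ (resp.\ $w\in\mathbb{I}_{\mathfrak{q}}$), and the quotient is isomorphic to $(\mathbb{I}_{\mathfrak{q}}/\mathfrak{q}^{e_{\mathfrak{q}}})^{\oplus 2}$ with $e_{\mathfrak{q}}=\mathrm{ord}_{\mathfrak{q}}(w)$. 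Since $\mathbb{I}/\mathfrak{a}$ is cyclic, so is its localization, which forces $e_{\mathfrak{q}}=0$, giving $w\in\mathbb{I}_{\mathfrak{q}}^{\times}$ and $\mathrm{ord}_{\mathfrak{q}}(\mathfrak{a})=0$. At $\mathfrak{q}=\mathfrak{p}$, I choose a basis of $\mathbb{T}_{\mathfrak{p}}=\mathbb{I}_{\mathfrak{p}} e_{1}\oplus\mathbb{I}_{\mathfrak{p}} e_{2}$ such that $\mathbb{T}^{\prime}_{\mathfrak{p}}=\pi\mathbb{I}_{\mathfrak{p}} e_{1}\oplus\mathbb{I}_{\mathfrak{p}} e_{2}$, where $\pi$ is a uniformizer of $\mathbb{I}_{\mathfrak{p}}$; this is possible since $\mathbb{I}_{\mathfrak{p}}$ is a DVR and $\mathbb{T}_{\mathfrak{p}}/\mathbb{T}^{\prime}_{\mathfrak{p}}\cong\mathbb{I}_{\mathfrak{p}}/\mathfrak{p}$. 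Writing $n=\mathrm{ord}_{\mathfrak{p}}(w)$ and carrying out a direct computation then gives, in case (1), $\mathbb{T}_{\mathfrak{p}}/w\mathbb{T}^{\prime}_{\mathfrak{p}}\cong\mathbb{I}_{\mathfrak{p}}/\mathfrak{p}^{n+1}\oplus\mathbb{I}_{\mathfrak{p}}/\mathfrak{p}^{n}$, and in case (2), $\mathbb{T}^{\prime}_{\mathfrak{p}}/w\mathbb{T}_{\mathfrak{p}}\cong\mathbb{I}_{\mathfrak{p}}/\mathfrak{p}^{n-1}\oplus\mathbb{I}_{\mathfrak{p}}/\mathfrak{p}^{n}$ (the latter requiring $n\geq 1$ from the inclusion itself). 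Cyclicity of the quotient then forces $n=0$ in case (1) and $n=1$ in case (2), and in both cases pins down $\mathrm{ord}_{\mathfrak{p}}(\mathfrak{a})=1$.

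Finally, since $\mathbb{I}$ is a UFD, a fractional (resp.\ integral) ideal is determined by its valuations at all $\mathfrak{q}\in P^{1}(\mathbb{I})$, so the local computations give $(w)=(1)$ and $\mathfrak{a}=\mathfrak{p}$ in case (1), and $(w)=\mathfrak{p}$ and $\mathfrak{a}=\mathfrak{p}$ in case (2). Translating back via $w=y/x$ (resp.\ $w=x/y$) yields $(x)=(y)$ and $(x)=(y)\mathfrak{p}$, respectively.

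The main obstacle is really just the bookkeeping at $\mathfrak{p}$: one must be careful to write $\mathbb{T}_{\mathfrak{p}}$ and $\mathbb{T}^{\prime}_{\mathfrak{p}}$ in compatible bases so that the structure of the quotient $\mathbb{T}/w\mathbb{T}^{\prime}$ (or $\mathbb{T}^{\prime}/w\mathbb{T}$) is transparent as a direct sum of cyclic modules of the right lengths. Once that elementary divisor computation is performed, the cyclicity of the quotient rigidly forces the exponents, and the rest is essentially a patching argument using the UFD property of $\mathbb{I}$.
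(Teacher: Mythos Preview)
Your argument is correct, but it proceeds quite differently from the paper's. The paper exploits the $D_p$-filtration: by Lemma~\ref{FO} (under $(D_p\text{-dist})$), the rank-one graded pieces $F^{+}\mathbb{T}$ and $F^{-}\mathbb{T}$ are free, and the proof of Lemma~\ref{neighbor} shows that the pair $(\mathbb{T},\mathbb{T}')$ satisfies either $F^{+}\mathbb{T}'=F^{+}\mathbb{T},\ F^{-}\mathbb{T}'=\mathfrak{p}F^{-}\mathbb{T}$ or the analogous condition with $F^{+}$ and $F^{-}$ swapped. Applying the same dichotomy to the pair $(x\mathbb{T},y\mathbb{T}')$ and comparing the two cases on the rank-one pieces immediately forces $(x)=(y)$ and $\mathfrak{a}=\mathfrak{p}$ (the ``mixed'' case would yield $\mathfrak{a}\mathfrak{p}=\mathbb{I}$, a contradiction). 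Part~(2) is then reduced to part~(1) via Lemma~\ref{neighbor} by replacing $\mathbb{T}$ with $\mathfrak{p}\mathbb{T}$.

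Your approach, by contrast, never touches the Galois action or the ordinary filtration: you localize at each height-one prime, use the elementary divisor theorem over the DVR $\mathbb{I}_{\mathfrak{p}}$ to write $\mathbb{T}'_{\mathfrak{p}}$ in a basis adapted to $\mathbb{T}_{\mathfrak{p}}$, and then force all exponents by the cyclicity of $\mathbb{I}/\mathfrak{a}$. This is more elementary and in fact proves a stronger, purely module-theoretic statement (valid for any two free $\mathbb{I}$-lattices with $\mathbb{T}/\mathbb{T}'\cong\mathbb{I}/\mathfrak{p}$, with no $G_{\mathbb{Q}}$-stability or $(D_p\text{-dist})$ needed). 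The paper's route is shorter once Lemmas~\ref{FO} and~\ref{neighbor} are in hand and meshes naturally with the filtration arguments used throughout \S4 and the appendix; your route is self-contained and makes transparent that the result is really a fact about lattices over a two-dimensional regular local ring.
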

\begin{proof}
We prove the first assertion. By the proof of Lemma \ref{neighbor}, we have that $\mathbb{T}$ and $\mathbb{T}^{\prime}$ satisfy one of the following equalities.
\begin{enumerate}
\item[(i)]$F^{+}\mathbb{T}^{\prime}=F^{+}\mathbb{T}, F^{-}\mathbb{T}^{\prime}=\mathfrak{p}F^{-}\mathbb{T}$
\item[(ii)]$F^{+}\mathbb{T}^{\prime}=\mathfrak{p}F^{+}\mathbb{T}, F^{-}\mathbb{T}^{\prime}=F^{-}\mathbb{T}$
\end{enumerate}
Since the proof under the conditions (i) and (ii) are done in the same way. We only prove under (i). Under the assumption, $x\mathbb{T}$ and $y\mathbb{T}^{\prime}$ satisfy one of the following equalities.
\begin{enumerate}
\item[$(\mathrm{i})^{\prime}$]$F^{+}\left(y\mathbb{T}^{\prime}\right)=F^{+}\left(x\mathbb{T} \right), F^{-}\left(y\mathbb{T}^{\prime}\right)=\mathfrak{a}F^{-}\left(x\mathbb{T} \right)$
\item[$(\mathrm{ii})^{\prime}$]$F^{+}\left(y\mathbb{T}^{\prime}\right)=\mathfrak{a}F^{+}\left(x\mathbb{T} \right), F^{-}\left(y\mathbb{T}^{\prime}\right)=F^{-}\left(x\mathbb{T} \right)$
\end{enumerate}
Assume the condition $(\mathrm{ii})^{\prime}$. Since $F^{+}\mathbb{T}, F^{-}\mathbb{T}, F^{+}\mathbb{T}^{\prime}$ and $F^{-}\mathbb{T}^{\prime}$ are free $\mathbb{I}$-modules of rank one, we have $\mathbb{I}=\mathfrak{ap}$ by (i). This contradicts to the assumption that $\mathfrak{a}$ is an integral ideal. Thus $x\mathbb{T}$ and $y\mathbb{T}^{\prime}$ must satisfy the condition $(\mathrm{i})^{\prime}$. Then we have $\left(x \right)=\left(y \right)$ and $\mathfrak{a}=\mathfrak{p}$ by (i). 

We prove the second assertion. We have $\left.\mathbb{T}^{\prime}\right/{\mathfrak{p}\mathbb{T}} \stackrel{\sim}{\rightarrow} \left.\mathbb{I}\right/{\mathfrak{p}}$ by Lemma \ref{neighbor}. Since $\mathfrak{p}$ is principal, we may replace $\mathbb{T}$ to $\mathfrak{p}\mathbb{T}$. Then the second assertion follows by the first assertion.

\end{proof}

For a $G_{\mathbb{Q}}$-stable $\mathbb{I}$-free lattice $\mathbb{T}$, we denote by $\left[\mathbb{T} \right]:=\set{\alpha\mathbb{T} | \alpha \in \mathbb{K}^{\times} }$ its homothetic class. For two points $x, x^{\prime} \in \mathscr{C}^{\mathrm{fr}}\left(\rho_{\mathcal{F}} \right)$, we say that $x$ is a \textit{neighbor} of $x^{\prime}$ if there exist representatives $\mathbb{T}$ and $\mathbb{T}^{\prime}$ of $x$ and $x^{\prime}$ respectively such that $\left.\mathbb{T}\right/{\mathbb{T}^{\prime}} \stackrel{\sim}{\rightarrow} \left.\mathbb{I}\right/{\mathfrak{p}}$ for some $\mathfrak{p} \in P^{1}\left(\mathbb{I} \right)$. The definition of neighbor is well-defined by Lemma \ref{neighbor well def} and is symmetric by Lemma \ref{neighbor}.

We define a graph structure on $\mathscr{C}^{\mathrm{fr}}\left(\rho_{\mathcal{F}} \right)$ as follows: 
\begin{enumerate}
\item[(vert)]The vertex set is $\mathscr{C}^{\mathrm{fr}}\left(\rho_{\mathcal{F}} \right)$.
\item[(edge)]For two points $x, x^{\prime} \in \mathscr{C}^{\mathrm{fr}}\left(\rho_{\mathcal{F}} \right)$, we draw an edge from $x$ to $x^{\prime}$ if $x$ is a neighbor of $x^{\prime}$.
\end{enumerate}

Now we study the graph $\mathscr{C}^{\mathrm{fr}}\left(\rho_{\mathcal{F}} \right)$. Recall that $\mathbb{J}$ is the ideal of $\mathbb{I}$ which is generated by $a\left(l, \mathcal{F} \right)-1-\chi\left(l \right)\langle l \rangle{\kappa^{\prime}}^{-1}\left(\langle l \rangle \right)$ for all primes $l \nmid Np$. When $\mathbb{J}^{**}=\mathbb{I}$, the graph $\mathscr{C}^{\mathrm{fr}}\left(\rho_{\mathcal{F}} \right)$ reduces to one point by Lemma \ref{1108}. Now we consider the case when $\mathbb{J}^{**} \subsetneq \mathbb{I}$. We recall the notation of \S 4. Let 
\begin{align*}
\mathcal{N}:&=\set{\mathfrak{p} \in P^{1}\left(\mathbb{I} \right) | \mathrm{ord}_{\mathfrak{p}}\mathbb{J}^{**} \neq 0} \\
&=\set{\mathfrak{p}_1, \cdots, \mathfrak{p}_r}
\end{align*} 
and let $n_i=\mathrm{ord}_{\mathfrak{p}_i}\mathbb{J}^{**}$ $\left(1 \leq i \leq r\right)$. We define the graph of $r$-dimensional rectangle $\mathrm{Rect}_{\left(0, \cdots, 0\right)}^{\left(n_1, \cdots, n_r \right)}$ as follows.
\begin{enumerate}
\item[(vert)]The vertex set is $\left\{\left(j_1, \cdots, j_r \right) \in \mathbb{Z}^{r} |\ 0 \leq j_1 \leq n_1, \cdots, 0 \leq j_r \leq n_r \right\}.$
\item[(edge)]We draw an edge from $\left(j_1, \cdots, j_r \right)$ to $\left(j_1^{\prime}, \cdots, j_r^{\prime} \right)$ if there exists an integer $1 \leq s \leq r$ such that 
\begin{equation}\label{12231}
|j_i^{\prime}-j_i|=\begin{cases}
0 & \left(i \neq s \right) \\
1 & \left(i=s \right). \\
\end{cases}
\end{equation}
\end{enumerate}

\begin{prp}
Let us keep the assumptions of Theorem \ref{yandongmain}. Then the graph $\mathscr{C}^{\mathrm{fr}}\left(\rho_{\mathcal{F}} \right)$ is isomorphic to $\mathrm{Rect}_{\left(0, \cdots, 0\right)}^{\left(n_1, \cdots, n_r \right)}$.
\end{prp}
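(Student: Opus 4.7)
The plan is to define a map $\Phi : \mathscr{C}^{\mathrm{fr}}\left(\rho_{\mathcal{F}}\right) \rightarrow \mathrm{Rect}_{\left(0,\ldots,0\right)}^{\left(n_1,\ldots,n_r\right)}$ sending the homothety class of $\mathbb{T}\left(j_1,\ldots,j_r\right)$ (defined by \eqref{04281}) to $\left(j_1,\ldots,j_r\right)$, and to verify that $\Phi$ is an isomorphism of graphs. That $\Phi$ is a well-defined bijection of vertex sets is immediate from Lemma \ref{1108}, so the content of the proposition lies in matching edges in both directions.

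For the forward direction, suppose that $\left(j_1,\ldots,j_r\right)$ and $\left(j_1^{\prime},\ldots,j_r^{\prime}\right)$ are adjacent in $\mathrm{Rect}_{\left(0,\ldots,0\right)}^{\left(n_1,\ldots,n_r\right)}$; say $j_s^{\prime}=j_s+1$ and $j_i^{\prime}=j_i$ for $i \neq s$. By \eqref{1901128}, one has $\mathbb{T}\left(j_1,\ldots,j_r\right) \subset \mathbb{T}\left(j_1^{\prime},\ldots,j_r^{\prime}\right)$ with localizations coinciding at every height-one prime other than $\mathfrak{p}_s$, while at $\mathfrak{p}_s$ the inclusion $\mathbb{T}_s^{\left(j_s\right)} \subset \mathbb{T}_s^{\left(j_s+1\right)}$ has length-one quotient $\mathbb{I}_{\mathfrak{p}_s}/\mathfrak{p}_s$. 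Hence the global quotient is supported only at $\mathfrak{p}_s$ and has length one there, so it is isomorphic to $\mathbb{I}/\mathfrak{p}_s$ as an $\mathbb{I}$-module, and the two homothety classes are neighbors.

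For the converse, let $\left[\mathbb{T}\right]$ and $\left[\mathbb{T}^{\prime}\right]$ be distinct neighbors and choose representatives with $\mathbb{T}^{\prime} \subset \mathbb{T}$ and $\mathbb{T}/\mathbb{T}^{\prime} \cong \mathbb{I}/\mathfrak{p}$ for some $\mathfrak{p} \in P^1\left(\mathbb{I}\right)$. Localization at any $\mathfrak{q} \in P^1\left(\mathbb{I}\right)$ with $\mathfrak{q} \neq \mathfrak{p}$ gives $\mathbb{T}_{\mathfrak{q}}=\mathbb{T}^{\prime}_{\mathfrak{q}}$. I would first argue that $\mathfrak{p}$ must lie in $\mathcal{N}$: if not, then by Lemmas \ref{3.7} and \ref{redu} one has $I\left(\rho_{\mathcal{F},\mathfrak{p}}\right)=\mathbb{J}\mathbb{I}_{\mathfrak{p}}=\mathbb{I}_{\mathfrak{p}}$, so by Proposition \ref{22}-(1) the residual representation at $\mathfrak{p}$ is irreducible and $\mathbb{T}_{\mathfrak{p}}$, $\mathbb{T}^{\prime}_{\mathfrak{p}}$ must be homothetic; writing $\mathbb{T}^{\prime}_{\mathfrak{p}}=c\mathbb{T}_{\mathfrak{p}}$ with $c \in \mathfrak{p}\mathbb{I}_{\mathfrak{p}}$ yields $\mathbb{T}_{\mathfrak{p}}/c\mathbb{T}_{\mathfrak{p}} \cong \left(\mathbb{I}_{\mathfrak{p}}/\left(c\right)\right)^2$, of length $2\,\mathrm{ord}_{\mathfrak{p}}\left(c\right) \geq 2$, contradicting the length-one quotient $\left(\mathbb{T}/\mathbb{T}^{\prime}\right)_{\mathfrak{p}}$. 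Hence $\mathfrak{p}=\mathfrak{p}_s$ for some $s$, and combining the identifications $\mathbb{T}_{\mathfrak{p}_i}=\mathbb{T}^{\prime}_{\mathfrak{p}_i}$ for $i \neq s$ with the fact that $\mathbb{T}^{\prime}_{\mathfrak{p}_s}$ is one step below $\mathbb{T}_{\mathfrak{p}_s}$ in the local chain $\mathbb{T}_s^{\left(n_s\right)} \supsetneq \cdots \supsetneq \mathbb{T}_s^{\left(0\right)}$ forces $\Phi\left(\left[\mathbb{T}\right]\right)$ and $\Phi\left(\left[\mathbb{T}^{\prime}\right]\right)$ to coincide outside index $s$ and to differ by exactly one at index $s$.

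The main obstacle will be the exclusion step in the third paragraph, namely ruling out that a neighbor relation in $\mathscr{C}^{\mathrm{fr}}\left(\rho_{\mathcal{F}}\right)$ is mediated by a height-one prime $\mathfrak{p} \notin \mathcal{N}$; once this is handled via the residual irreducibility of $\rho_{\mathcal{F},\mathfrak{p}}$ at such primes together with the length count of the local quotient above, the identification of the two graph structures is a direct translation through the local-global description of $\mathbb{T}\left(j_1,\ldots,j_r\right)$ given by \eqref{04281} and \eqref{1901128}.
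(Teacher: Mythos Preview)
Your argument is correct and in fact more complete than the paper's. The paper establishes the bijection on vertices via Lemma~\ref{1108} and then checks only the forward edge direction: given adjacent $(j_1,\ldots,j_r)$ and $(j_1',\ldots,j_r')$ in $\mathrm{Rect}$, it invokes Lemma~\ref{Fil2} to compute $\mathbb{T}(j_1',\ldots,j_r')/\mathbb{T}(j_1,\ldots,j_r)\cong\mathbb{I}/\mathfrak{p}_s$ and stops there, without verifying that every neighbor relation in $\mathscr{C}^{\mathrm{fr}}(\rho_{\mathcal{F}})$ arises this way. You supply this converse by first excluding primes $\mathfrak{p}\notin\mathcal{N}$ via residual irreducibility and the length parity of $(\mathbb{I}_{\mathfrak{p}}/(c))^{\oplus 2}$, and then reading off $|j_s-j_s'|=1$ from the local chain at $\mathfrak{p}_s$; this is genuine additional content. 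For the forward direction you use a direct localization computation rather than Lemma~\ref{Fil2}, which is a legitimate alternative.

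One small point worth making explicit in your forward step: the inference from ``the global quotient has vanishing localization at all height-one primes except $\mathfrak{p}_s$, where it has length one'' to ``it is isomorphic to $\mathbb{I}/\mathfrak{p}_s$'' is not automatic over a two-dimensional ring. It holds here because the quotient of two free rank-two lattices has projective dimension at most one, hence depth at least one by Auslander--Buchsbaum, so $\mathfrak{m}_{\mathbb{I}}$ is not an associated prime and the quotient injects into its localization at $\mathfrak{p}_s$; it is then a finitely generated torsion-free module over the discrete valuation ring $\mathbb{I}/\mathfrak{p}_s$ of generic rank one, hence free of rank one. Alternatively you can simply cite Lemma~\ref{Fil2} as the paper does, since the quotient is visibly a cyclic quotient of $\mathbb{T}(j_1',\ldots,j_r')/\mathbb{T}(0,\ldots,0)\cong\mathbb{I}/\prod_i\mathfrak{p}_i^{j_i'}$.
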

\begin{proof}
For each $\mathfrak{p}_i \in \mathcal{N}$ and $0 \leq j_i \leq n_i$, let $\mathbb{T}\left(j_1, \cdots, j_r \right)$ be the $G_{\mathbb{Q}}$-stable $\mathbb{I}$-free lattice which is defined in \S 4.3. Then we have 
\begin{equation*}
\mathscr{C}^{\mathrm{fr}}\left(\rho_{\mathcal{F}} \right)=\left\{\left[\mathbb{T}\left(j_1, \cdots, j_r \right)\right] |\ 0 \leq j_1 \leq n_1, \cdots, 0 \leq j_r \leq n_r \right\}
\end{equation*}
by Lemma \ref{1108}.

Let us take elements $\left(j_1, \cdots, j_r \right), \left(j_1^{\prime}, \cdots, j_r^{\prime} \right) \in \mathrm{vert}\,\mathrm{Rect}_{\left(0, \cdots, 0\right)}^{\left(n_1, \cdots, n_r \right)}$ which satisfy \eqref{12231}. We may assume $j_s^{\prime}-j_s=1$. Then we have $$\mathbb{T}\left(0, \cdots, 0\right) \subset \mathbb{T}\left(j_1, \cdots, j_r \right) \subset \mathbb{T}\left(j_1^{\prime}, \cdots, j_r^{\prime} \right).$$Hence we have
\begin{equation*}
\left.\mathbb{T}\left(j_1^{\prime}, \cdots, j_r^{\prime} \right)\right/{\mathbb{T}\left(j_1, \cdots, j_r \right)} \stackrel{\sim}{\rightarrow} \left.\mathbb{I}\right/{\mathfrak{p}_s}
\end{equation*}
by Lemma \ref{Fil2}. This implies that $\left[\mathbb{T}\left(j_1^{\prime}, \cdots, j_r^{\prime} \right)   \right]$ is a neighbor of $\left[\mathbb{T}\left(j_1, \cdots, j_r \right)    \right]$ and we complete the proof. 
\end{proof}

For example, when $\mathbb{J}^{**}=\mathfrak{p}^e$, the graph $\mathscr{C}^{\mathrm{fr}}\left(\rho_{\mathcal{F}} \right)$ is a segment. When $\mathbb{J}^{**}=\mathfrak{pqr}$ with $\mathfrak{p}\neq\mathfrak{q}\neq\mathfrak{r} \in P^1\left(\mathbb{I} \right)$, the graph $\mathscr{C}^{\mathrm{fr}}\left(\rho_{\mathcal{F}} \right)$ becomes a cube.

\end{document}